\newcommand \eps {\varepsilon}
\newcommand \B {\mathcal B}
\newcommand \C {\mathcal C}
\renewcommand \H {\mathcal H}
\newcommand \E {\mathbb E}
\newcommand \R {\mathbb R}
\newcommand \T {\mathbb T}
\newcommand \N {\mathbb N}
\newcommand \Z {\mathbb Z}
\newcommand \m {\mathfrak m}
\newcommand \comp{\mathbb C}
\newcommand \inv {^{-1}}
\newcommand \nin{\not\in}
\newcommand \symdif{\bigtriangleup}
\newcommand \directint {\overset{\oplus}{\int}}
\newcommand* \sys {(X,\B,m)}
\newcommand* \nsys {(Y,\C,\nu)}
\newcommand* \Lp[2] {L_{#1}{#2}}
\newcommand* \bb [1]{\left({#1}\right)}
\newcommand* \binr[2]{\left<{#1},{#2}\right>}
\newcommand* \bset[1] {\left\{{#1}\right\}}
\newcommand* \indic [1]{\mathbf 1_{#1}}
\newcommand* \norm[2]{\left|\left|{#1}\right|\right|_{#2}}
\newcommand* \abs[1]{\left|{#1}\right|}
\newcommand* \integrate[4]{\int_{#1}^{#2}{#3}d{#4}}
\newcommand* \limit [2]{\underset{{#1}\rightarrow{#2}}{\lim}}
\newcommand* \bunion[3]{\overset {#3}{\underset{{#1} = {#2}}{\bigcup}}}
\newcommand* \bunions[2]{{\underset{{#1} \in {#2}}{\bigcup}}}
\newcommand* \sumit[3]{\overset {#3}{\underset{{#1} = {#2}}{\sum}}}
\newcommand* \sums[2]{\underset{{#1} \in {#2}}{\sum}}
\newcommand* \prodit[3]{\overset {#3}{\underset{{#1} = {#2}}{\prod}}}
\newcommand* \prods[2]{\underset{{#1} \in {#2}}{\prod}}
\newcommand* \bintersect[3]{\overset {#3}{\underset{{#1} = {#2}}{\bigcap}}}
\newcommand* \bintersects[2]{{\underset{{#1} \in {#2}}{\bigcap}}}
\newtheorem{thm}{Theorem}[section]
\newtheorem{lem}[thm]{Lemma}
\newtheorem{prop}[thm]{Proposition}
\newtheorem{cor}[thm]{Corollary}
\newtheorem{defn}[thm]{Definition}
\newtheorem{examp}[thm]{Example}
\newtheorem{rmk}[thm]{Remark}
\newtheorem*{theorem-non}{Theorem}
\newtheorem*{state-non}{Statement}
\begin{document}
\title{Ergodic Multiplier Properties}
\author{Adi Gl\"ucksam}
\today
\maketitle
\begin{abstract}
In this article we will see some properties that guarantee that a product of an ergodic non-singular action and a probability preserving ergodic action is also an ergodic action. We will start by proving 'The multiplier theorem' for locally compact abelian groups. Then we will show that for certain locally compact Polish groups (Moore groups, and minimally weakly mixing groups), a non-singular $G$ action is weakly mixing if and only if any finite dimensional $G$-invariant subspace of $L_\infty(X,\mathcal B,m)$ is trivial. Finally, we will show that the Gaussian action associated to the infinite dimensional irreducible representation of the continuous Heisenberg group, $H_3(\mathbb R)$, is weakly mixing but not mildly mixing.
\end{abstract}
\section{Introduction}
\begin{defn}\label{pptDef}
Let $\sys,\nsys$ be probability spaces, and let $A\in\B, A'\in\C$. A map $f:A\rightarrow A'$ is called a {\bf probability preserving transformation} if it is measurable and for every $C\in\C\cap A'$
$$
\nu(C)=m(f\inv(C))
$$
We denote by $PPT(X)$ the group of all invertible measure preserving transformations from $\sys$ to itself.
\end{defn}
\begin{defn}\label{nstDef}
Let $\sys,\nsys$ be probability spaces, and let $A\in\B, A'\in\C$. A map $f:A\rightarrow A'$ is called a {\bf non-singular transformation} if it is measurable and for every $C\in\C\cap A'$
$$
\nu(C)=0 \iff m(f\inv(C))=0
$$
We denote by $NST(X)$ the group of all invertible non-singular transformations from $\sys$ to itself. 
\end{defn}
\begin{defn}\label{PolishDef}
A {\bf Polish space} is a separable completely metrizable topological space; that is, a space homeomorphic to a complete metric space that has a countable dense subset.
\end{defn}
Let $\sys$ be a standard probability space, $T$ a non-singular transformation. Define the isometry $U=U_T$ on $\Lp2{\sys}$ by:
$$
(U_Tf)(x)=\sqrt{\frac{d\bb{m\circ T}}{dm}}\cdot f(Tx)
$$
This isometry, called the Koopman isometry, is well defined, since $T$ is a non-singular action, then the Radon-Nikodym derivative is a non-negative function in $\Lp1{\sys}$. In addition, it is a positive isometry in the sense that for every $f\in\Lp2{\sys}$ such that $f\ge 0$
$$
U_Tf=\sqrt{\frac{d\bb{m\circ T}}{dm}}\cdot f(Tx)\ge 0
$$
since the Radon-Nikodym derivative is non-negative and so was $f$. Similarly for every $U$, a positive isometry of $\Lp2{\sys}$, there exists a non-singular transformation $T$ such that $U=U_T$. This result is due to Lamperti, a proof can be found in \cite{Royden1968} (page 333).\\
The group of positive isometries is a subgroup of the group of operators defined on $\Lp2{\sys}$. Endow the group of positive isometries with the weak operator topology. The map $T\mapsto U_T$ is a bijection. The group of non-singular transformations endowed with the topology generated by the pull-back of the topology of positive isometries is a separable Polish group.
\begin{defn}
A {\bf probability preserving action} is a continuous homomorphism $T:G\rightarrow PPT(X)$.
\end{defn}
\begin{defn}
A {\bf non-singular action} is a continuous homomorphism $T:G\rightarrow NST(X)$.
\end{defn}
\begin{defn}
$T$ is called an {\bf ergodic action} if the following condition holds: if for every $g\in G$, $m(T_gA\bigtriangleup A) = 0$, then either $m(A)=0$ or $m(X\setminus A)=0$.
\end{defn}
\begin{prop}\label{denseSubgroupErgProp}
Let $G$ be a locally compact Polish group, $\sys$ be a standard probability space, $T:G\rightarrow NST(X)$ be an ergodic action, and let $G_0\subseteq G$ be a dense subgroup. Then the action of $G_0$ on $\sys$ is also ergodic.
\end{prop}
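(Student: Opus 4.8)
The plan is to translate the hypothesis — that $A$ is invariant under the dense subgroup $G_0$ — into the vanishing of two explicit, \emph{continuous}, non-negative functions on $G$ built from the Koopman isometries $U_g := U_{T_g}$, and then use density to propagate this vanishing to all of $G$ before invoking ergodicity of the full action. So fix $A\in\B$ with $m(T_g A\symdif A)=0$ for every $g\in G_0$; the goal is to upgrade this to $G$-invariance and conclude.

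First I would record the reformulation of invariance in Koopman terms. Since each $T_g$ is invertible and non-singular, $m(T_g A\symdif A)=0$ is equivalent to $m(T_g\inv A\symdif A)=0$ (apply $T_g\inv$, which preserves null sets). Writing $\rho_g=\frac{d\bb{m\circ T_g}}{dm}$, the definition of the isometry gives $U_g\indic A=\sqrt{\rho_g}\cdot\indic{T_g\inv A}$, and $\rho_g>0$ almost everywhere because $m\circ T_g\sim m$. Hence the two non-negative quantities
$$\Phi(g)=\binr{U_g\indic A}{\indic{X\setminus A}},\qquad \Psi(g)=\binr{U_g\indic{X\setminus A}}{\indic A}$$
equal $\int_{(X\setminus A)\cap T_g\inv A}\sqrt{\rho_g}\,dm$ and $\int_{A\cap T_g\inv(X\setminus A)}\sqrt{\rho_g}\,dm$, respectively. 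Because the integrand is strictly positive, $\Phi(g)=\Psi(g)=0$ holds exactly when $T_g\inv A=A$ modulo null sets, i.e.\ exactly when $A$ is $g$-invariant.

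Next I would exploit continuity. By construction the topology on $NST(X)$ is the pull-back under $T\mapsto U_T$ of the weak operator topology on positive isometries, so $g\mapsto U_g$ is weakly continuous and therefore $\Phi,\Psi\colon G\to\R$ are continuous (and non-negative, by positivity of the isometries). The hypothesis together with the reformulation gives $\Phi\equiv\Psi\equiv 0$ on $G_0$; since $G_0$ is dense and $\Phi,\Psi$ are continuous, they vanish on all of $G$. Reading the reformulation backwards, $T_g\inv A=A$ (mod null) for every $g\in G$, and applying this to $g\inv$ yields $m(T_g A\symdif A)=0$ for every $g\in G$. Ergodicity of the $G$-action then forces $m(A)=0$ or $m(X\setminus A)=0$, which is precisely ergodicity of the $G_0$-action.

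The one delicate point — the part I would be most careful about — is the reformulation in the first step: turning the measure-theoretic condition $m(T_gA\symdif A)=0$ into the vanishing of the bilinear quantities $\Phi,\Psi$ relies essentially on \emph{positivity} of the Koopman isometry, so that the inner products are non-negative and bounded below by the strictly positive weight $\sqrt{\rho_g}$ on the relevant overlap. Once this is in place, the remaining ingredients — weak continuity of $g\mapsto U_g$ and the elementary density argument — are routine.
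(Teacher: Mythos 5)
Your proof is correct, and it reaches the same destination as the paper's by a noticeably cleaner mechanism. The paper's proof has the same skeleton — propagate invariance from $G_0$ to $G$ using continuity of $g\mapsto U_g$, then invoke ergodicity of the full action — but implements it sequentially: it picks $g_n\to g$ in $G_0$, asserts that $U_{g_n}\to U_g$ in the weak operator topology forces $\indic A\circ T_{g_n}\to\indic A\circ T_g$ in measure, extracts $m\bb{T_{g_n}A\symdif T_gA}<\eps$, and closes with the triangle inequality $m\bb{A\symdif T_gA}\le m\bb{A\symdif T_{g_n}A}+m\bb{T_{g_n}A\symdif T_gA}$. The passage from weak operator convergence of the weighted Koopman isometries to convergence in measure of the unweighted compositions $\indic A\circ T_{g_n}$ is asserted rather than proved, and it is the one step in the paper's argument that genuinely needs justification (one would typically note that WOT convergence of isometries to an isometry is automatically strong convergence, and then control the Radon--Nikodym weights). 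Your argument sidesteps this entirely: encoding invariance of $A$ in the two matrix coefficients $\Phi(g)=\binr{U_g\indic A}{\indic{X\setminus A}}$ and $\Psi(g)=\binr{U_g\indic{X\setminus A}}{\indic A}$ uses only what the topology on $NST(X)$ gives by definition (continuity of $g\mapsto\binr{U_gf}{h}$), and the strict positivity of $\sqrt{\rho_g}$ a.e.\ — valid since $T_g$ is invertible non-singular, so $m\circ T_g\sim m$ — makes the equivalence $\Phi(g)=\Psi(g)=0\iff m\bb{T_g\inv A\symdif A}=0$ airtight; density then kills $\Phi,\Psi$ on all of $G$ with no sequences, no convergence in measure, and no $\eps$-bookkeeping. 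What your route buys is self-containedness and rigor relative to the paper's definitions; what the paper's route buys is the reusable (if unproved here) fact about convergence in measure, which it implicitly relies on again elsewhere (e.g.\ in the proof of Theorem \ref{banachKronThm}). Both finish identically by applying ergodicity of the $G$-action, and your conversions between $m\bb{T_gA\symdif A}=0$ and $m\bb{T_g\inv A\symdif A}=0$ are all legitimate since $T_g$ and $T_g\inv$ preserve null sets.
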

\begin{proof} Let $A\in\B$ be a $G_0$-invariant subset and let $g\in G$. Denote by $U$, the induced Koopman representation. There exists a sequence $\bset{g_n}\subseteq G_0$ such that $g_n\rightarrow g$, therefore by continuity $U_{g_n}\rightarrow U_g$, and in particular for every function $f$ measurable $f\circ T_{g_n}$ converges to $f\circ T_g$ in measure. Specifically for $f=\indic A$, we have $\indic A\circ T_{g_n}\overset{m}\rightarrow \indic A\circ T_g$. Let there be $\eps>0$, then there exists $N$ such that for every $n>N$:
$$
m\bb{T_{g_n}A\symdif T_gA}<\eps
$$
We will show $A$ is a $G$-invariant set.
$$
m\bb{A\symdif T_gA}\le m\bb{A\symdif T_{g_n}A}+m\bb{T_{g_n}A\symdif T_gA}<\eps
$$
by the triangle inequality.
This is true for every $\eps$ and therefore $m\bb{A\symdif T_gA}=0$, which means $A$ is a $G$-invariant set, and by ergodicity of $G$ it is trivial. Since every $G_0$-invariant set is trivial we conclude the action of $G_0$ on $\sys$ is ergodic.
\end{proof}
\begin{defn}
A non-singular action of $G$ on a standard space is called {\bf weakly mixing} if for any probability preserving ergodic action of $G$, the product action is also ergodic.
\end{defn}
\begin{rmk}\label{tomRmk}
Let $T$ be a non-singular weakly mixing action of $\sys$, let $S$ be a probability preserving action on $\nsys$ such that $T\times S\times S$ is ergodic. Then for every $k\in\N$, $T\times \underbrace{S\times\dots\times S}_{k\text{ times }}$ defined on the product space, is weakly mixing.\\
To prove this, use the fact that for every probability preserving action, $R$ on $(Z,\mathcal D,\mu)$, $(T\times \underbrace{S\times\dots\times S}_{k\text{ times }})\times R=T\times(\underbrace{S\times\dots\times S}_{k\text{ times }}\times R)$ and the fact that $S$ is weakly mixing, and therefore $S\times R$ is also an ergodic probability preserving action.
\end{rmk}
\begin{defn}\label{reducedKoopmanDef}
Let $T$ be a probability preserving action of a locally compact Polish group, acting on a standard probability space $\sys$. The {\bf reduced Koopman representation} is the Koopman representation reduced to the subspace $\Lp2{\sys}_0:=\bset{f\in\Lp2{\sys};~\int fdm=0}$.
\end{defn}
\begin{theorem-non}[The weak mixing theorem for $\Z$ actions]\label{WeakMixZThm}
For probability preserving $\mathbb Z$ actions the following conditions are equivalent:
\begin{enumerate}
\item The action is weakly mixing.
\item Any finite dimensional invariant subspace of $\Lp2{\sys}$ is trivial.
\item The product action is ergodic.
\item For every probability preserving ergodic action, the product action is ergodic.
\item The spectral type of the reduced Koopman representation is non-atomic.
\end{enumerate}
\end{theorem-non}
This theorem was originally proved by Koopman and Von Neumann in 1932 in their article "Dynamical systems of continuous spectra" (see\cite{Koopman1932}). A simple proof can be found in Peterson's book (see \cite{Petersen1983} page 65, theorem 6.1).\\
We will soon see there is a generalization of this theorem for probability preserving actions of locally compact Polish groups.\\

Another result for $\Z$ actions is 'the ergodic multiplier theorem'. This theorem, originally proved by M.Keane, spectrally characterizes when the product of a non-singular action and a probability preserving action is ergodic. A proof can be found in Aaronson's book (see \cite{Aaronson1997} pages 81-82). In this article, we will extend 'the ergodic multiplier theorem' for general locally compact abelian groups.\\

A natural next step was to study similar properties for more general groups. The first progress in this direction, originally proved by Dye and later improved by others, concerned unitary representations of locally-compact Polish groups. This means it can only be applied to probability preserving actions. To understand these results one shall need the following definitions:
\begin{defn}\label{C_bTopDef}
Denote $C_b(G)$ the set of bounded continuous functions defined on $G$. We say a sequence converges in the {\bf strong topology} if it converges in the supremum norm. We say a sequence $\bset{x_n}$ converges to $x$ in the {\bf weak topology} if for every linear functional $f$, $f(x_n)\rightarrow f(x)$.
\end{defn}
\begin{defn}\label{APandWAPDef}
A function $f\in C_b(G)$ is called {\bf almost periodic} if the orbit $\bset{f_g; g\in G}$ is pre-compact in $C_b(G)$ endowed with the strong topology. $f\in C_b(G)$ is called {\bf weakly almost periodic} if the orbit $\bset{f_g; g\in G}$ is pre-compact in $C_b(G)$ endowed with the weak topology.\\
Denote by $WAP(G)$ the algebra of weakly almost periodic functions, and by $AP(G)$ the algebra of almost periodic functions.
\end{defn}
\begin{examp} Let $\sys$ be a standard probability space. Given a unitary representation $\pi$ of $G$ on $\H:=\Lp2{\sys}$, for every two functions $f_1,f_2\in\H$ define the function $\varphi_{f_1,f_2} = \varphi:G\rightarrow\mathbb C$ by
$$
\varphi(g) = \binr{\pi(g)f_1}{f_2}
$$
Then $\varphi_{f_1,f_2}$ is weakly almost periodic. If in addition $\m(|\varphi_{f_1,f_2}|)=0$, then $\varphi_{f_1,f_2}$ is called a {\bf flight function}. For more details see \cite{Glasner2003}.
\end{examp}
\begin{defn}\label{invMeanDef}
Let $\mathfrak A$ be a liner space of bounded functions on $G$, which is closed under conjugations and contains the constant functions. A linear functional defined on $\mathfrak A$,  $\m$, is called a {\bf mean} if:
\begin{enumerate}
\item[$\bullet$] For every $f\in\mathfrak A$, $\m(\overline f)=\overline{\m(f)}$.
\item[$\bullet$] If $f\ge 0$, then $\m(f)\ge 0$.
\item[$\bullet$] $\m(\mathbf 1)=1$.
\end{enumerate}
For every function $f\in\mathfrak A$, denote by $f_g$ the function $f_g(h)=f(gh)$. $\m$ is called a {\bf left $G$-invariant mean} if it is a mean and in addition $m(f_g)=m(f)$ for every $f\in\mathfrak A$ and $g\in G$.
There is a similar notion for right $G$-invariant mean.
\end{defn}
\begin{thm}
The algebra $WAP(G)$ admits a unique left and right $G$-invariant mean, denoted by $\m$.
\end{thm}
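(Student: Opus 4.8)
The plan is to realize the mean, for each $f\in WAP(G)$, as the unique constant function contained in a weakly compact convex hull of the orbit of $f$, to obtain existence of such a constant from the Ryll-Nardzewski fixed point theorem, and to force its uniqueness together with the coincidence of the left and right constructions from the Grothendieck double-limit property that characterizes weak almost periodicity.

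First I would fix $f\in WAP(G)$ and work inside the Banach space $C_b(G)$ with the supremum norm. By Definition \ref{APandWAPDef} the left orbit $\bset{f_g;~g\in G}$ is relatively weakly compact, so by Krein's theorem its weakly closed convex hull $K_f:=\overline{\text{co}}^{\,w}\bset{f_g;~g\in G}$ is weakly compact and convex. Each left translation $\phi\mapsto\phi_g$ is a surjective linear isometry of $C_b(G)$, so these maps restrict to a group of weakly continuous affine self-maps of $K_f$ that is noncontracting, since $\norm{\phi_g-\psi_g}{\infty}=\norm{\phi-\psi}{\infty}>0$ whenever $\phi\ne\psi$. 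The Ryll-Nardzewski fixed point theorem then produces a common fixed point $\varphi\in K_f$, i.e. a $\varphi$ with $\varphi(gh)=\varphi(h)$ for all $g,h$; setting $h$ equal to the identity shows that $\varphi$ is a constant. Thus $K_f$ contains at least one constant, and the same argument applied to the right orbit $\bset{h\mapsto f(hg);~g\in G}$ produces a constant in the corresponding right-orbit hull.

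I would then define $\m(f)$ to be the value of a constant in $K_f$ and check that it has the required properties. The mean axioms of Definition \ref{invMeanDef} are inherited directly from the orbit: if $f\ge 0$ then every $f_g\ge 0$, every convex combination is $\ge 0$, and a constant in their weak closure is $\ge 0$; the normalization $\m(\mathbf 1)=1$ holds because the orbit of $\mathbf 1$ is $\bset{\mathbf 1}$; and compatibility with conjugation follows from $K_{\overline f}=\overline{K_f}$. Left-invariance is immediate once $\m$ is single-valued, because the left orbit of $f_g$ coincides with that of $f$, so $K_{f_g}=K_f$. What remains, and what carries all the real content, is that $K_f$ contains \emph{exactly one} constant, that this value agrees with the one extracted from the right orbit, and that the resulting assignment $f\mapsto\m(f)$ is linear.

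The step I expect to be the main obstacle is precisely this uniqueness together with the reconciliation of the left and right constructions, and the tool for it is the Grothendieck double-limit characterization of $WAP(G)$: for $f\in WAP(G)$ and sequences $\bset{g_n},\bset{h_k}\subseteq G$ one has $\limit n\infty\limit k\infty f(g_nh_k)=\limit k\infty\limit n\infty f(g_nh_k)$ whenever both iterated limits exist. This interchangeability is exactly what is needed to prove a Fubini-type identity for the successive application of a left-averaging and a right-averaging operation to the function $(x,y)\mapsto f(xy)$, from which it follows that the constant coming from the left orbit equals the constant coming from the right orbit and that neither depends on any of the choices made; single-valuedness and simultaneous left- and right-invariance of $\m$ follow, and linearity is then read off from the iterated-averaging description of the common constant. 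Finally, uniqueness of the bi-invariant mean on the whole algebra is clean and does not need the double-limit input: any invariant mean $\m'$ lies in $WAP(G)\dual$ and is therefore weakly continuous and affine, so it is constant on $K_f$ with value $\m'(f)$; since $\m(f)\mathbf 1$ lies in $K_f$ this forces $\m'(f)=\m(f)\,\m'(\mathbf 1)=\m(f)$ for every $f$, so $\m'=\m$.
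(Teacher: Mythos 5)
The paper itself gives no argument for this theorem: its ``proof'' is a citation to Ellis (Theorem III.A.2), and the classical proof found there and in the standard references is exactly the architecture you reconstruct — weak compactness of the closed convex orbit hull via Krein's theorem, a common fixed point from Ryll-Nardzewski (legitimate, since translations are isometries and hence the affine action is noncontracting), identification of the fixed point as a constant by evaluating at the identity, and your final uniqueness paragraph is complete and correct as written: positivity together with $\m'(\mathbf 1)=1$ gives $\abs{\m'(f)}\le\norm f{\infty}$, hence $\m'\in WAP(G)\dual$ is weakly continuous, constant on $K_f$, and forced to equal $\m(f)$. So in substance you have the right proof, and it is the one the paper outsources.

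Two steps need repair, and interestingly you deploy your main tool in the wrong place. First, Definition \ref{APandWAPDef} defines $WAP(G)$ by weak precompactness of the \emph{left} orbit only, so when you run the fixed-point argument on the right orbit $\bset{f^{h}},$ where $f^{h}(x):=f(xh)$, you must first show that this orbit is also weakly precompact — and \emph{that} is precisely where the Grothendieck double-limit criterion (symmetric in the two variables) earns its keep; you cite the criterion but never use it for this. Second, for the step you flag as the main obstacle — uniqueness of the constant in $K_f$ and left-right agreement — the double-limit property is neither needed nor straightforwardly applicable: Grothendieck's criterion concerns iterated limits of $f(g_nh_k)$ along translates, not along convex combinations, so your ``Fubini-type identity'' does not follow from it without further work. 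The clean completion is elementary: by Mazur's theorem the weakly closed convex hull equals the norm-closed convex hull, so if $c\cdot\mathbf 1$ lies in the left hull and $c'\cdot\mathbf 1$ in the right hull, pick convex combinations $u=\sum_i a_i f_{g_i}$ and $v=\sum_j b_j f^{h_j}$ with $\norm{u-c\cdot\mathbf 1}{\infty}<\eps$ and $\norm{v-c'\cdot\mathbf 1}{\infty}<\eps$; the double average $w(x)=\sum_{i,j}a_ib_j\,f(g_ixh_j)$ is simultaneously a right-average of $u$ and a left-average of $v$, and since translation is an isometry fixing constants, $\norm{w-c\cdot\mathbf 1}{\infty}<\eps$ and $\norm{w-c'\cdot\mathbf 1}{\infty}<\eps$, whence $c=c'$. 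This single estimate delivers single-valuedness of $\m$, the left-right coincidence, and — by one further iterated averaging, using the uniqueness just obtained — the additivity of $\m$ that you left as ``read off,'' thereby filling every gap in your sketch.
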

\begin{proof} see \cite{Ellis1989} theorem III.A.2.
\end{proof}
By using the invariant mean on weakly almost periodic functions, similar equivalences to the ones presented for $\Z$ actions, can be deduced for representations of locally compact Polish groups.
\begin{defn}\label{weakMixRepDef}
A continuous unitary representation of a group $G$ on a Hilbert space $\H$ is called {\bf weakly mixing} if for every $f_1,f_2\in\H$, $\m\bb{\abs{\binr{\pi(g)f_1}{f_2}}} =0$.\\
It is called {\bf ergodic} if every $f_1,f_2\in\H$, $\m\bb{\binr{\pi(g)f_1}{f_2}} =0$.
\end{defn}
\begin{thm}[Corollary from Bergelson \& Rosenblatt 1988 \cite{Bergelson1988}] \label{bergelsonRosenblattThm}
Let $\sys$ be a standard probability space, $G$ a locally compact Polish group, $T:G\rightarrow PPT(X)$. Let $\pi$ be the reduced Koopman representation of $G$ induced by $T$. Then the following are equivalent:
\begin{enumerate}
\item $\pi$ is weakly mixing.
\item For every $f_1,f_2\in \Lp2{\sys}_0$:
$$
\m\bb{\abs{\int \bb{\pi(g)f_1}f_2dm}}=0
$$
\item $\pi$ contains no non-zero finite dimensional sub-representations.
\item The unitary representation $\pi\times\pi:G\rightarrow AUT(\H\times\H)$ is ergodic.
\item For every unitary representation $\sigma:G\rightarrow AUT(\mathcal K)$, on a Hilbert space $\mathcal K$, the unitary representation $\pi\times\sigma:G\rightarrow AUT(\H\times\mathcal K)$ is ergodic.
\end{enumerate}
\end{thm}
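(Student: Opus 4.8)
The plan is to prove the cycle $(1)\Rightarrow(5)\Rightarrow(4)\Rightarrow(1)$, to read off $(1)\Leftrightarrow(2)$ directly from the definitions, and to establish $(3)\Leftrightarrow(4)$ separately; together these give all five equivalences. Throughout I would rely on the facts recorded above: every matrix coefficient $g\mapsto\binr{\pi(g)f_1}{f_2}$ of a unitary representation lies in $WAP(G)$, which is a conjugation-closed algebra carrying the unique invariant mean $\m$, and $\m$ is positive, unital, and contractive for the supremum norm. In particular $\abs{\m(\varphi)}\le\m(\abs{\varphi})$, and since $\m$ induces a positive semidefinite form the Cauchy--Schwarz inequality $\m(\abs{\varphi})\le\m(\abs{\varphi}^2)^{1/2}$ holds. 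The equivalence $(1)\Leftrightarrow(2)$ is then only an unwinding of notation: as $\pi$ is the reduced Koopman representation we have $\H=\Lp2{\sys}_0$ and $\binr{\pi(g)f_1}{f_2}=\int(\pi(g)f_1)\overline{f_2}\,dm$, and since $\overline{f_2}$ ranges over $\Lp2{\sys}_0$ exactly when $f_2$ does, the two families of conditions coincide.

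For $(1)\Rightarrow(5)$, fix a unitary representation $\sigma$ on $\mathcal K$. On simple tensors the matrix coefficient of $\pi\times\sigma$ factors as $\binr{\pi(g)f_1}{f_2}\binr{\sigma(g)k_1}{k_2}$, a product of two $WAP$ functions, hence $WAP$; bounding the second factor by $\norm{k_1}{}\norm{k_2}{}$ and using positivity of $\m$ gives $\m\bb{\abs{\binr{\pi(g)f_1}{f_2}\binr{\sigma(g)k_1}{k_2}}}\le\norm{k_1}{}\norm{k_2}{}\,\m\bb{\abs{\binr{\pi(g)f_1}{f_2}}}=0$, so the mean of the coefficient itself vanishes. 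A density argument over finite combinations of simple tensors, together with sup-norm continuity of $\m$, extends this to all of $\H\otimes\mathcal K$ and yields ergodicity of $\pi\times\sigma$. Specializing to $\sigma=\pi$ gives $(5)\Rightarrow(4)$ immediately.

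For $(4)\Rightarrow(1)$ I would use the pointwise identity $\abs{\binr{\pi(g)f_1}{f_2}}^2=\binr{(\pi\otimes\bar\pi)(g)(f_1\otimes\bar f_1)}{f_2\otimes\bar f_2}$, so that $\abs{\varphi}^2$ is a matrix coefficient of $\pi\otimes\bar\pi$. Because the action is probability preserving, complex conjugation is a conjugate-linear isometry commuting with each $\pi(g)$, whence $\pi\cong\bar\pi$ and $\pi\times\pi\cong\pi\otimes\bar\pi$; thus $(4)$ forces $\m(\abs{\varphi}^2)=0$, and Cauchy--Schwarz gives $\m(\abs{\varphi})=0$, i.e. $(1)$. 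This closes the cycle, so $(1),(2),(4),(5)$ are equivalent. For $(3)\Leftrightarrow(4)$ I would pass to the isomorphism $\H\otimes\bar\H\cong\mathrm{HS}(\H)$ carrying $\pi\otimes\bar\pi$ to $A\mapsto\pi(g)A\pi(g)^\star$, under which invariant vectors correspond exactly to Hilbert--Schmidt operators commuting with $\pi$. If $(3)$ fails, the finite-rank projection onto a finite-dimensional invariant subspace is such a commuting operator, hence a non-zero invariant vector, so $\pi\times\pi\cong\pi\otimes\bar\pi$ is not ergodic; conversely, if $(4)$ fails, a non-zero commuting $A\in\mathrm{HS}(\H)$ yields the non-zero positive compact operator $A^\star A$ commuting with $\pi$, any of whose non-zero eigenspaces is finite-dimensional and $\pi$-invariant, contradicting $(3)$.

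The main obstacle is the direction $\neg(4)\Rightarrow\neg(3)$ inside this last equivalence, and two points require care. First, non-ergodicity must be upgraded to the existence of an honest non-zero invariant vector; this is the content of the mean ergodic theorem for the invariant mean on $WAP(G)$, stating $\m(\binr{\pi(g)f_1}{f_2})=\binr{Pf_1}{f_2}$ with $P$ the orthogonal projection onto the invariant vectors, which I would invoke. Second, one must extract a genuine finite-dimensional invariant subspace from a merely Hilbert--Schmidt intertwiner, which is exactly where the spectral theorem for compact self-adjoint operators enters. The remaining delicate bookkeeping is the identification of $\pi\times\pi$ with $\pi\otimes\bar\pi$ via the conjugation attached to the probability preserving action, needed so that condition $(4)$ matches the Hilbert--Schmidt picture.
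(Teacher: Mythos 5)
The paper itself offers no proof of this theorem: it is imported verbatim as a corollary of Bergelson--Rosenblatt \cite{Bergelson1988}, so there is no internal argument to compare yours against. Judged on its own terms, your reconstruction is correct and follows the standard circle of ideas behind the cited result. The delicate points are exactly the ones you flag, and you resolve them properly: (i) you read $\pi\times\sigma$ and $\pi\times\pi$ as tensor products, which is the right reading --- under a direct-sum reading, condition (4) would hold for the reduced Koopman representation of \emph{any} ergodic action (no invariant vectors), so the equivalence with (1) would fail already for an irrational rotation; the tensor reading is also the one consistent with the paper's use, since products of actions induce tensor products of Koopman representations; (ii) the identity $\abs{\binr{\pi(g)f_1}{f_2}}^2=\binr{(\pi\otimes\bar\pi)(g)(f_1\otimes\bar f_1)}{f_2\otimes\bar f_2}$, combined with $\bar\pi\cong\pi$ (conjugation $f\mapsto\bar f$ preserves $\Lp2{\sys}_0$ because $m$ is a genuine measure, and it intertwines the Koopman operators of a probability preserving action), is what lets (4) control $\m\bb{\abs\varphi^2}$, and Cauchy--Schwarz for the positive unital mean on the commutative C*-algebra $WAP(G)$ (note $\abs\varphi\in WAP(G)$, as $WAP(G)$ is norm-closed and conjugation-closed) then yields (1); (iii) upgrading non-ergodicity to an honest nonzero invariant vector via the mean ergodic theorem $\m\bb{\binr{\pi(g)f_1}{f_2}}=\binr{Pf_1}{f_2}$ is a genuine external input, which is legitimate given that the whole theorem is anyway quoted from the literature; and (iv) the passage from a Hilbert--Schmidt intertwiner $A$ to the finite-dimensional nonzero eigenspaces of the compact positive operator $A^\star A$ (with $A^\star$ also commuting with $\pi$, by taking adjoints of $\pi(g)A=A\pi(g)$) correctly extracts the finite-dimensional subrepresentation, closing $(3)\Leftrightarrow(4)$. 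Your $(1)\Rightarrow(5)$ --- factoring coefficients on simple tensors, bounding the $\sigma$-factor by $\norm{k_1}{}\norm{k_2}{}$, then extending by bilinearity, uniform-in-$g$ estimates, and sup-norm contractivity of $\m$ --- is likewise sound, and $(1)\Leftrightarrow(2)$ is indeed pure bookkeeping with the conjugation. In short: no gap; this is a valid self-contained proof of the statement the paper only cites.
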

\begin{rmk}[Bergelson \& Gorodnik \cite{Bergelson2004}]\label{bergelsonCor}
Let $G$ be a locally compact Polish group, $T:G\rightarrow PPT(X)$ a probability preserving action of $G$ on a standard probability space $\sys$. $T$ is weakly mixing if and only if it is reduced Koopman representation is weakly mixing if and only if there is no finite dimensional $T$-invariant subspaces in $\Lp2{\sys}_0$.
\end{rmk}
\begin{prop}\label{weak&measureProp}
Let $G$ be a locally compact Polish group, $T$ a non-singular action on a standard probability space $\sys$. Denote by $U$ the Koopman representation, defined by:
$$
U_gf = \sqrt{\frac{dm_g}{dm}}\cdot f\circ T_g=\sqrt{\frac{d(m\circ T_g)}{dm}}\cdot f\circ T_g
$$
If there is no absolutely continuous invariant probability measure, then $U$ is weakly mixing.
\end{prop}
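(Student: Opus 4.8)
The plan is to prove the contrapositive through the representation-theoretic characterization of weak mixing. First I would note that since each $T_g$ is an \emph{invertible} non-singular transformation, the Koopman operator $U_g$ is not merely a positive isometry but in fact a unitary operator on $\H:=\Lp2{\sys}$: surjectivity follows from invertibility of $T_g$, and $\|U_gf\|_2=\|f\|_2$ is the change-of-variables identity $\int\phi_g\,|f\circ T_g|^2\,dm=\int|f|^2\,dm$, where $\phi_g:=\frac{d(m\circ T_g)}{dm}$. Thus $U$ is a continuous unitary representation of $G$, and by the equivalence of weak mixing with the absence of nonzero finite-dimensional invariant subspaces --- the representation-theoretic content of $(1)\iff(3)$ in Theorem \ref{bergelsonRosenblattThm}, whose proof uses only that $U$ is a continuous unitary representation --- it suffices to show that $U$ admits no nonzero finite-dimensional invariant subspace.

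Next I would assume toward a contradiction that $V\subseteq\H$ is a nonzero finite-dimensional $U$-invariant subspace and fix an orthonormal basis $f_1,\dots,f_d$ of $V$. For each $g$, invariance ($U_gV=V$) together with unitarity makes $U_g|_V$ a unitary of $V$, so $U_gf_i=\sum_j a_{ij}(g)f_j$ with $(a_{ij}(g))$ a unitary matrix. Set $h:=\sum_{i=1}^d|f_i|^2$. The key computation is that, because $(a_{ij}(g))$ is unitary, the cross terms collapse and $\sum_i|U_gf_i|^2=\sum_j|f_j|^2=h$ (equivalently, $h$ is the diagonal of the reproducing kernel of $V$, which is basis-independent, while $\{U_gf_i\}$ is just another orthonormal basis of $V$). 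On the other hand $|U_gf_i|^2=\phi_g\,|f_i\circ T_g|^2$, so summing over $i$ yields the pointwise identity $\phi_g\cdot(h\circ T_g)=h$ a.e., for every $g\in G$.

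Then I would upgrade this density identity to invariance of the measure $\mu:=h\,dm$. Note $h\ge0$ and $\int h\,dm=\sum_i\|f_i\|_2^2=d>0$, so $\mu$ is a nonzero finite measure with $\mu\ll m$. For any bounded measurable $F$, using $h=\phi_g(h\circ T_g)$ and the $\Lp1{\sys}$ form of the isometry identity $\int(H\circ T_g)\phi_g\,dm=\int H\,dm$ applied to $H=Fh$, I compute
$$
\int (F\circ T_g)\,d\mu=\int (F\circ T_g)\,\phi_g\,(h\circ T_g)\,dm=\int\bb{(Fh)\circ T_g}\phi_g\,dm=\int Fh\,dm=\int F\,d\mu .
$$
Hence $\mu$ is $T_g$-invariant for every $g$, and after normalizing by $1/d$ I obtain an absolutely continuous $T$-invariant probability measure, contradicting the hypothesis. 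This contradiction shows $U$ has no nonzero finite-dimensional invariant subspace and is therefore weakly mixing.

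The main obstacle is the middle step: recognizing that the correct invariant object built from $V$ is the density $h=\sum_i|f_i|^2$ and proving the pointwise identity $\phi_g(h\circ T_g)=h$. This is where the unitarity of the finite matrices $(a_{ij}(g))$ is essential --- for a general invariant subspace the cross terms would not cancel, and it is precisely the rigidity of a \emph{finite}-dimensional invariant subspace (as opposed to the flight part of the representation) that forces $h$ to transform like an invariant density rather than merely an invariant vector. A secondary point to handle carefully is that the cited equivalence is being applied to the full (non-reduced) unitary representation $U$ of a non-singular action, rather than only to the reduced Koopman representation of a probability-preserving action; I would justify this by emphasizing that $(1)\iff(3)$ rests solely on the Jacobs--de Leeuw--Glicksberg structure of an arbitrary continuous unitary representation.
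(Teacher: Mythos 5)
Your proof is correct and takes essentially the same route as the paper: both reduce via the Bergelson--Rosenblatt characterization to a nonzero finite-dimensional invariant subspace, observe that the matrix of $U_g$ in an orthonormal basis $f_1,\dots,f_d$ is unitary, and conclude that $\frac1d\sum_{j=1}^d|f_j|^2\,dm$ (your normalized $h\,dm$) is an absolutely continuous invariant probability measure, contradicting the hypothesis. Your explicit pointwise identity $\phi_g\,(h\circ T_g)=h$ and your care in applying the equivalence to the full unitary representation via Theorem \ref{bergelsonRosenblattThm} (rather than Corollary \ref{bergelsonCor}, which is stated for reduced Koopman representations of probability-preserving actions, as the paper loosely cites) are just slightly more careful renderings of the paper's $(\star)$ and $(\star\star)$ steps.
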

\begin{proof}
Since $U$ is a continuous unitary representation, if it is not weakly mixing, by corollary \ref{bergelsonCor}, there exists a finite dimensional non-trivial subspace in $\Lp2{\sys}$. Let $V = cls\bb{span\bset{v_1,v_2,\dots,v_d}}$ be the invariant subspace. Choose $f_1,\dots,f_d$ an orthogonal basis for $V$, such that $\norm{f_j}{\Lp2{\sys}}=1$. Note that since for every $g\in G$, $U_g$ is unitary, then for every $f\in\Lp2{\sys}$, $\norm{U_gf}{\Lp2{\sys}}^2=\norm{f}{\Lp2{\sys}}^2$, since:
$$
\norm{U_gf}2^2=\int_X\abs{U_gf}^2dm=\int_X\abs{\sqrt{\frac{dm_g}{dm}}f\circ T_g}^2dm=
$$
$$
=\int_X\abs{f\circ T_g}^2\cdot \frac{dm_g}{dm}dm=\int_X\abs{f}^2dm=\norm{f}2^2
$$
For every $1\le j\le d$ and for every $g\in G$ there exists $a_1^j(g),a_2^j(g)\dots,a_d^j(g)\in\R$ such that $U_gf_j=\sumit i 1 d a_i^j(g)f_i$, since $U_gf_j\in V$. Define the matrix
$$
A_g:=\begin{pmatrix}
a_1^1(g) & a_2^1(g) & \dots & a_n^1(g)\\ a_1^2(g) & a_2^2(g) & \dots & a_n^2(g)\\ \vdots &\vdots &\dots &\vdots \\ a_1^n(g) & a_2^n(g) & \dots & a_n^n(g)
\end{pmatrix}
$$
Since $U_g$ is linear, for every $\sumit j 1 d a_jf_j=f\in V$, if we denote by $\psi(\overline a)=\sumit j 1 d a_jf_j$, then:
$$
(U_gf)(x)=\sumit j 1 d a_j\cdot(U_gf_j)=\psi\bb{A_g^T\cdot \begin{pmatrix}a_1\\\vdots \\a_d\end{pmatrix}}
$$
We will show that for every $x\in \R^d$
$$
\norm {A_gx}2:=\norm{x}2
$$
Define $\pi:V\rightarrow\R^d$ by
$$
\pi\bb{\sumit j 1 d a_jf_j}=	\begin{pmatrix}
					a_1\\a_2\\\vdots\\a_d
					\end{pmatrix}
$$
Then since $\bset{f_j}$ is an orthonormal basis, $\pi$ is an isometry:
$$
\norm{\pi\bb{\sumit j 1 d a_jf_j}}2^2=\norm{\begin{pmatrix}
					a_1\\a_2\\\vdots\\a_d
					\end{pmatrix}}2^2=\sumit j 1 d\abs{a_j}^2=\norm{\sumit j 1 d a_jf_j}{\Lp2{\sys}}^2
$$
Note that $\pi(U_gf)=A_g^T\cdot\pi(f)$ (it is clear for the basis functions $\bset {f_1,\dots,f_d}$ and for the rest it is true by linearity). Since $\pi$ is an isometry we conclude that for every $x\in\R^d$:
$$
\norm{A_g^Tx}2=\norm{(U_g\pi\inv(x))}{\Lp2{\sys}}=\norm{\pi\inv(x)}{\Lp2{\sys}}=\norm x2
$$
We conclude that $\norm{A_g^Tx}2=\norm x2$, and therefore $\norm{A_gx}2=\norm x2$.\\
Define $F:X\rightarrow\R^d$ by
$$
F(x) = (f_1(x),f_2(x),\dots,f_d(x))
$$
and define the function $\phi:X\rightarrow \R$ by
$$
\phi(x) = \frac1d\sumit j 1 d\abs{f_j(x)}^2=\frac1d\norm{F(x)}2^2
$$
where $\norm*2$ denotes the Euclidean norm in $\R^d$.\\
Define the measure $d\mu = \phi\cdot dm$, then it is a probability measure, and for every set $B\in\B$:
$$
\mu(T_gB)=\underset X\int\indic{T_gB}d\mu(x)=\underset X\int\indic{T_gB}(x)\cdot\phi(x) dm(x)=
$$
$$
=\underset X\int\indic{T_gB}(x)\cdot\frac1d\sumit j 1 d\abs{f_j(x)}^2 dm(x)=\underset X\int\indic{B}(x)\cdot\frac1d\sumit j 1 d\abs{(U_gf_j)(x)}^2 dm(x)\overset{(\star)}=
$$
$$
=\underset X\int\indic{B}(x)\cdot\frac1d\norm{A_g\cdot F(x)}2^2 dm(x)\overset{(\star\star)}=\underset X\int\indic{B}(x)\cdot\frac1d\norm{F(x)}2^2 dm(x)=\mu(B)
$$
Where $(\star)$ is because:
$$
\norm{A_g\cdot F(x)}2^2=\sumit j 1 d\abs{\sumit i 1 d a_i^j(g)f_i}^2=\sumit j 1 d\abs{U_gf_j}^2
$$
and $(\star\star)$ is true since for every $v\in\R^d$, $\norm{A_gv}2=\norm v 2$, and specifically it is true for $v\in\R^d$ of the form $v=F(x)$ for some $x\in X$.
\\$\mu$ is a probability measure, it is action invariant, and it is absolutely continuous with respect to the original measure, by definition. We conclude that if there is no absolutely continuous, invariant, probability preserving measure, then $U$ is weakly mixing.
\end{proof}
Bergelson \& Rosenblatt's result is not enough to characterize weak mixing when it comes to non-singular actions. There are examples of non singular actions which are not weakly mixing, but no invariant probability measure exists, and by proposition \ref{weak&measureProp}, their reduced Koopman representation is weakly mixing. A non-singular adding machine (as described in \cite{Aaronson1997} pages 29-31) is conservative, ergodic, and has no absolutely continuous invariant measure, and therefore by proposition \ref{weak&measureProp} its Koopman representation is weakly mixing. Nevertheless, it is isomorphic to a group rotation, and therefore has $L_\infty$ eigenvalues, which means the action itself is not weakly mixing.\\

In this article, we will use representation theory to find ergodic multiplier properties for non-singular actions of locally compact Polish Moore groups (see section 4). We will show that if $G$ is a locally compact Polish Moore group, then a non-singular action of $G$ is weakly mixing if and only if every finite dimensional $G$-invariant subspace of $\Lp\infty{\sys}$ is trivial. In addition, we will show some examples of such actions.\\

We will conclude the article with an action of the Heisenberg group, which is weakly mixing but not mildly mixing (see section 4.3).\\
\section{Spectral properties of non-singular actions of locally compact abelian groups}
\subsection{Preliminaries}
\begin{defn}\label{charDef}
Let $G$ be a locally compact abelian group. A {\bf character}, is a continuous group homomorphism between $G$ and $S^1$, usually denoted by $\chi$. The characters form a group, which is called "the Dual group of $G$", and denoted by $\hat G$. $\hat G$ is also a subspace of all the continuous functions from $G$ to $\comp$, endow $\hat G$ with the topology of uniform convergence on compact sets. By Pontryagin duality theorem, since $G$ is locally compact, then $\hat{\hat G} = G$. Finally, the group action of the dual group is given by point-wise multiplication, and the inverse of a character is its complex conjugate.
\end{defn}
\begin{defn}\label{eigenDef}
An {\bf eigenvalue of the action $T$} is a character, $\chi\in \hat G$, such that there exists a non-zero measurable function $f\in\Lp2{\sys}$ for which:
$$
f\circ T_g = \chi(g)\cdot f ~~ -m \text{ almost everywhere}
$$
for every $g\in G$. The function $f$ will be called an {\bf eigenfunction}.\\
Denote by $e(T)$ the set of all eigenvalues of the action $T$.
\end{defn}
\begin{defn}\label{spectralMeasureDef}
For a complex separable Hilbert space $\H$, let $\mathcal E$ denote the collection of orthogonal projections in $\H$. Let $(X,\mathcal B)$ be a measurable Borel space. A function $E:\mathcal B\rightarrow\mathcal E$ is called a {\bf spectral measure} if:
\begin{enumerate}
\item $E(X) = Id$
\item $E\left(\bigcup_{n = 1}^\infty B_n\right) = \sum_{n = 1}^\infty E(B_n)$, for any pairwise disjoint sets.\\
The equality above should be interpreted for every $h\in \H$ in the following sense:
$$
\left(E\left(\bigcup_{n = 1}^\infty B_n\right)\right)(h) = \sum_{n = 1}^\infty (E(B_n))(h)
$$
\end{enumerate}
\end{defn}
\begin{thm}[The spectral theorem (see \cite{Stone1932a}, \cite{Neumann1932}, \cite{Hellinger1909}, \& \cite{Nadkarni1998})]\label{spectralThm}
Let $U_g:\mathcal H\rightarrow\mathcal H$ be a continuous unitary representation of $G$. Then there exists a spectral measure $E$ on $\hat G$ such that for every $g\in G$:
$$
U_g = \int_{\hat G}\chi(-g)dE
$$
\end{thm}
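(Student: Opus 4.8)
The plan is to obtain the spectral measure from the Gelfand theory of the commutative group algebra, which is the standard route to the Stone--Naimark--Ambrose--Godement theorem. First I would integrate the representation against the convolution algebra $\Lp1{(G)}$: for $f\in\Lp1{(G)}$ define the operator $\pi(f)$ by $\pi(f)h=\int_G f(g)U_gh\,dg$, a Hilbert-space-valued Bochner integral that converges for each $h\in\H$ since $g\mapsto U_gh$ is continuous and each $U_g$ is unitary. Using that $U$ is a homomorphism together with the translation invariance of Haar measure, one checks that $\pi$ is a $*$-representation of the Banach $*$-algebra $\Lp1{(G)}$ on $\H$, which is moreover nondegenerate: for an approximate identity $\bset{e_\alpha}$ supported in shrinking neighbourhoods of the identity, $\pi(e_\alpha)\to Id$ strongly by continuity of $U$.

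Since $G$ is abelian, $\Lp1{(G)}$ is commutative, hence so is its enveloping $C^*$-algebra $C^*(G)$. The Gelfand transform identifies the spectrum of $\Lp1{(G)}$ with $\hat G$ and yields an isometric $*$-isomorphism $C^*(G)\cong C_0(\hat G)$ which restricts on $\Lp1{(G)}$ to the Fourier transform $f\mapsto\hat f$, where $\hat f(\chi)=\int_G f(g)\chi(-g)\,dg$ (here $\chi(-g)=\overline{\chi(g)}$ since $\chi$ is a character). The representation $\pi$ extends uniquely to a nondegenerate $*$-representation $\tilde\pi$ of $C_0(\hat G)$, tied to $\pi$ by the relation $\tilde\pi(\hat f)=\pi(f)$ for all $f\in\Lp1{(G)}$.

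Next I would invoke the spectral theorem for nondegenerate representations of a commutative $C^*$-algebra: every such representation of $C_0(\hat G)$ on $\H$ is integration against a unique regular projection-valued measure $E$ on $\hat G$, i.e.\ $\tilde\pi(\phi)=\int_{\hat G}\phi\,dE$ for all $\phi\in C_0(\hat G)$. Concretely, for fixed $h,k\in\H$ the functional $\phi\mapsto\binr{\tilde\pi(\phi)h}{k}$ is bounded and linear on $C_0(\hat G)$, so by Riesz--Markov it is represented by a complex regular Borel measure $\mu_{h,k}$; setting $\binr{E(B)h}{k}=\mu_{h,k}(B)$ defines bounded operators $E(B)$, and the multiplicativity $\tilde\pi(\phi\psi)=\tilde\pi(\phi)\tilde\pi(\psi)$ together with $\tilde\pi(\overline\phi)=\tilde\pi(\phi)\dual$ forces the $E(B)$ to be orthogonal projections satisfying $E(A\cap B)=E(A)E(B)$. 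This passage from the a priori merely bounded operators $E(B)$ to a genuine spectral measure, with the attendant regularity and countable additivity in the strong operator topology, is the technical crux of the argument, and it is exactly where the algebra-homomorphism property is indispensable (rather than mere positivity, as one would have from Bochner's theorem applied to a single vector $g\mapsto\binr{U_gh}{h}$).

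Finally I would reconcile the two integral representations. For every $f\in\Lp1{(G)}$,
$$
\int_G f(g)U_g\,dg=\pi(f)=\tilde\pi(\hat f)=\int_{\hat G}\hat f(\chi)\,dE(\chi)=\int_{\hat G}\Big(\int_G f(g)\chi(-g)\,dg\Big)\,dE(\chi).
$$
Pairing with vectors $h,k$ and applying Fubini (legitimate since $\abs{\chi(-g)}=1$ and $\mu_{h,k}$ has finite total variation) gives $\int_G f(g)\binr{U_gh}{k}\,dg=\int_G f(g)\binr{\big(\int_{\hat G}\chi(-g)\,dE\big)h}{k}\,dg$ for all $f$. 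Both $g\mapsto\binr{U_gh}{k}$ and $g\mapsto\binr{(\int_{\hat G}\chi(-g)\,dE)h}{k}$ are continuous, so running $f$ through an approximate identity concentrated at a point $g_0$ yields $\binr{U_{g_0}h}{k}=\binr{(\int_{\hat G}\chi(-g_0)\,dE)h}{k}$ for all $h,k\in\H$, which is precisely the asserted identity $U_g=\int_{\hat G}\chi(-g)\,dE$ interpreted in the weak sense.
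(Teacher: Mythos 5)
The paper itself offers no proof of this theorem: it is quoted as classical background with references to Stone, von Neumann, Hellinger, and Nadkarni, so there is no internal argument to compare yours against. What you have written is the standard modern proof of the Stone--Naimark--Ambrose--Godement theorem via the commutative group $C^*$-algebra, and it is correct in structure and in detail: integrating $U$ against $L_1(G)$ to get a nondegenerate $*$-representation, passing through the Gelfand isomorphism $C^*(G)\cong C_0(\hat G)$, invoking the projection-valued-measure form of the spectral theorem for nondegenerate representations of $C_0(\hat G)$, and recovering $U_{g_0}$ weakly by running translated approximate identities through the two integral formulas. You also correctly identify the crux, namely that multiplicativity of $\tilde\pi$ (not mere positive-definiteness of the diagonal matrix coefficients, as in a Bochner-theorem argument) is what forces the $E(B)$ to be commuting orthogonal projections. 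Three small points are worth making explicit if this were written out in full. First, the hypothesis that $G$ is locally compact abelian (implicit in the paper's Section 2) is used at the very start, for the existence of Haar measure and for the identification of the spectrum of $L_1(G)$ with $\hat G$. Second, the paper's Definition \ref{spectralMeasureDef} requires $E(\hat G)=Id$; this is exactly where the nondegeneracy you established via the approximate identity $\pi(e_\alpha)\to Id$ enters, and it deserves a sentence rather than being left inside the citation of the commutative spectral theorem. Third, the Fubini interchange needs joint measurability of $(g,\chi)\mapsto f(g)\chi(-g)$, which holds because the duality pairing $G\times\hat G\to S^1$ is jointly continuous for locally compact abelian groups; with that noted, the dominated-convergence argument giving continuity of $g\mapsto\int_{\hat G}\chi(-g)\,d\mu_{h,k}(\chi)$ closes the proof as you say.
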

As a result for every $\alpha,\beta\in\H$ there exists a measure $m_{\alpha,\beta}$ defined on $\hat G$ such that:
$$
\binr{U_g\alpha}\beta = \int_{\hat G}\chi(-g)dm_{\alpha,\beta}
$$
Moreover, there exists a measure, $\sigma_0$ on $\hat G$, such that for every $\alpha,\beta\in\H$, $m_{\alpha,\beta}$ is absolutely continuous with respect to $\sigma_0$. The measure $\sigma_0$ is defined uniquely up to equivalence of measures and is known as {\bf the spectral type of the the representation}.\\
\begin{cor}\label{sesquiCor}[The scalar spectral theorem]
Let $G$ be a locally compact abelian group, $T:G\rightarrow PPT(X)$ an ergodic action of $G$ defined on a standard probability space $\sys$. Denote by $\sigma_0$ the spectral type of the reduced Koopman representation induced by $T$. Then there exists a non-trivial sesquilinear map 
$$
h:\Lp2{\sys}_0\times \Lp2{\sys}_0\rightarrow L_1(\sigma_0)
$$
such that for every $\alpha,\beta\in\Lp2{\sys}_0$, and for every $g\in G$:
$$
(h(U_g\alpha,\beta))(\chi) = \chi(-g)\cdot(h(\alpha,\beta))(\chi)
$$
where this equality is an $L_1(\sigma_0)$ equality.
\end{cor}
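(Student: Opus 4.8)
The plan is to read the map $h$ directly off the spectral data supplied by Theorem \ref{spectralThm}. Applying the spectral theorem to the reduced Koopman representation $U$ on $\Lp2{\sys}_0$ yields a spectral measure $E$ on $\hat G$ with $U_g=\int_{\hat G}\chi(-g)dE$, and for each pair $\alpha,\beta\in\Lp2{\sys}_0$ the complex measure $m_{\alpha,\beta}(B)=\binr{E(B)\alpha}{\beta}$ on $\hat G$ satisfies $\binr{U_g\alpha}{\beta}=\int_{\hat G}\chi(-g)dm_{\alpha,\beta}$. Since the space is separable, the discussion following Theorem \ref{spectralThm} provides the spectral type $\sigma_0$ dominating every $m_{\alpha,\beta}$, so each $m_{\alpha,\beta}$ is absolutely continuous with respect to $\sigma_0$. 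I would then define
$$
h(\alpha,\beta):=\frac{dm_{\alpha,\beta}}{d\sigma_0}\in L_1(\sigma_0),
$$
the Radon--Nikodym derivative, which is integrable because $m_{\alpha,\beta}$ is a finite complex measure (with total variation bounded by $\norm{\alpha}{\Lp2{\sys}}\norm{\beta}{\Lp2{\sys}}$).

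First I would check that $h$ is well defined and sesquilinear. For each fixed $B$ the map $(\alpha,\beta)\mapsto\binr{E(B)\alpha}{\beta}$ is linear in $\alpha$ and conjugate-linear in $\beta$, so $(\alpha,\beta)\mapsto m_{\alpha,\beta}$ is sesquilinear as a map into the measures dominated by $\sigma_0$. Because the Radon--Nikodym derivative is additive in the measure and unique up to $\sigma_0$-null sets, this sesquilinearity passes to $h$ as an identity in $L_1(\sigma_0)$.

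The heart of the argument is the covariance identity, and this is the step I expect to require the most care. The point is that $U_g=\int_{\hat G}\chi(-g)dE$ lies in the von Neumann algebra generated by $E$, so it commutes with every $E(B)$, and by multiplicativity of the spectral measure, $E(A)E(B)=E(A\cap B)$, one obtains $E(B)U_g=\int_B\chi(-g)dE$. Hence
$$
m_{U_g\alpha,\beta}(B)=\binr{E(B)U_g\alpha}{\beta}=\binr{\bb{\int_B\chi(-g)dE}\alpha}{\beta}=\int_B\chi(-g)dm_{\alpha,\beta}(\chi),
$$
which says precisely that $dm_{U_g\alpha,\beta}=\chi(-g)dm_{\alpha,\beta}$ as measures on $\hat G$. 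Dividing through by $d\sigma_0$ and invoking uniqueness of the Radon--Nikodym derivative yields $h(U_g\alpha,\beta)(\chi)=\chi(-g)\cdot h(\alpha,\beta)(\chi)$ in $L_1(\sigma_0)$, as required; the only delicate bookkeeping is to phrase each manipulation as a $\sigma_0$-almost-everywhere equality rather than a pointwise one.

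Finally, non-triviality is immediate: taking $\alpha=\beta$ with $\norm{\alpha}{\Lp2{\sys}}=1$ gives $\int_{\hat G}h(\alpha,\alpha)d\sigma_0=m_{\alpha,\alpha}(\hat G)=\binr{\alpha}{\alpha}=1\neq 0$, so $h(\alpha,\alpha)$ is not the zero element of $L_1(\sigma_0)$ (this uses only that the reduced representation acts on a non-zero space). The main obstacle, as indicated above, is the justification of the commutation relation $E(B)U_g=\int_B\chi(-g)dE$ together with the careful handling of the absolute-continuity and almost-everywhere equalities that turn the measure identity into the desired $L_1(\sigma_0)$ identity.
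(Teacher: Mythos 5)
Your proposal is correct and follows essentially the same route as the paper: both define $h(\alpha,\beta)$ as the Radon--Nikodym derivative $\frac{dm_{\alpha,\beta}}{d\sigma_0}$ of the spectral measures coming from Theorem \ref{spectralThm} and verify sesquilinearity and the covariance identity from the measure-level identities. Your justification of the covariance step via $E(B)U_g=\int_B\chi(-g)dE$ and your direct non-triviality computation $\int_{\hat G}h(\alpha,\alpha)d\sigma_0=\norm{\alpha}{}^2$ are in fact tidier than the paper's corresponding steps, but they are refinements within the same argument, not a different approach.
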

\begin{proof} By the 'the spectral theorem', for every $\alpha,\beta\in\Lp2\sys_0$ there exists a measure $m_{\alpha,\beta}$ such that
$$
\binr{U_g\alpha}\beta = \int_{\hat G} \chi(-g)dm_{\alpha,\beta}(\chi)
$$
Denote by $\sigma_0$ the spectral type of the representation $\{U_g\}_{g\in G}$, then $m_{\alpha,\beta}$ is absolutely continuous with respect to $\sigma_0$.\\
By Radon-Nikodym theorem there exists a function $f_{\alpha,\beta}\in L_1(\sigma_0)$ such that $\frac{dm_{\alpha,\beta}}{d\sigma_0} = f_{\alpha,\beta}\in L_1(\sigma_0)$. Define the map
$$
h(\alpha,\beta) = \frac{dm_{\alpha,\beta}}{d\sigma_0}
$$
First, it is indeed a sesquilinear map: let there be $a,b\in\mathbb C, \alpha,\beta,\gamma\in\Lp2{\sys}_0$, then-
$$
h(a\cdot\alpha+\beta,b\cdot\gamma) = \frac{dm_{a\cdot\alpha+\beta,b\cdot\gamma}}{d\sigma} \overset{(\star)}{=} a\cdot\overline b\cdot \frac{dm_{\alpha,\gamma}}{d\sigma}+\overline b\cdot\frac{dm_{\beta,\gamma}}{d\sigma}=
$$
$$
=a\cdot\overline b\cdot h(\alpha,\gamma)+\overline b\cdot h(\beta,\gamma)
$$
where $(\star)$ is true since for every $g\in G$ on one hand
$$
\binr{U_g(a\cdot\alpha+\beta)}{b\cdot\gamma} = \int_{\hat G}\chi(-g)dm_{a\cdot\alpha+\beta,b\cdot\gamma}
$$
but on the other hand
$$
\binr{U_g(a\cdot\alpha+\beta)}{b\cdot\gamma} = a\cdot\overline b\cdot\binr{U_g\alpha}\gamma+\overline b\cdot\binr{U_g\beta}\gamma = \int_{\hat G}\chi(-g)(a\cdot\overline b\cdot dm_{\alpha,\gamma}+\overline b\cdot dm_{\beta,\gamma})
$$
We conclude 
$$
dm_{a\cdot\alpha+\beta,b\cdot\gamma} = a\cdot\overline b\cdot dm_{\alpha,\gamma}+\overline b\cdot dm_{\beta,\gamma}
$$
Next, $h$ is non-trivial, otherwise all the operators $U_g$ are the identity, which is a contradiction to the ergodicity of the action. In addition, for every $g\in G$, and for every $\chi\in\hat G$:
$$
(h(U_g\alpha,\beta))(\chi) = \left(\frac{\chi(-g)dm_{\alpha,\beta}}{d\sigma}\right)(\chi) = \chi(-g)\cdot\left(\frac{dm_{\alpha,\beta}}{d\sigma}\right)(\chi) = \chi(-g)(h(\alpha,\beta))(\chi)
$$
\end{proof}
\begin{thm}[The eigenvalue theorem for locally compact abelian groups- originally proved by Schmidt (see \cite{Schmidt1982a}), a generalisation of the proof in \cite{Aaronson1997}]\label{eigenvalueThm}
Let $G$ be a locally compact abelian group, $T:G\rightarrow NST(X)$ a $G$-action defined on a standard probability space $(X,\mathcal B,m)$ such that $T$ is non-singular, and ergodic. Then:
\begin{enumerate}
\item[$\bullet$] The set $e(T)$ is a Borel set.
\item[$\bullet$] There exists $\psi:e(T)\times X\rightarrow S^1$ a measurable map such that for every $\chi\in e(T)$ and for every $g\in G$:
\begin{equation}\label{eigenEq}
\psi(\chi,T_gx) = \chi(g)\cdot\psi(\chi,x) \text{ for }m- \text{ almost every }x\in X
\end{equation}
\end{enumerate}
\end{thm}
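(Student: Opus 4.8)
The plan is to realise the eigenfunctions as points of the Polish group $L_0(X,S^1)$ of ($m$-equivalence classes of) measurable maps $X\to S^1$ under convergence in measure, and to read off both assertions from a single descriptive-set-theoretic statement about the ``graph'' of the eigenfunction correspondence. First I would normalise. If $f\in\Lp2{\sys}$ is a non-zero eigenfunction for $\chi$, then $\abs f\circ T_g=\abs{f\circ T_g}=\abs f$ for every $g$, so by ergodicity $\abs f$ is a positive constant and $f/\abs f\in L_0(X,S^1)$ is again an eigenfunction for $\chi$. Moreover if $f,f'$ are unimodular eigenfunctions for the same $\chi$, then $f'\overline f\circ T_g=f'\overline f$, so $f'\overline f$ is a $G$-invariant unimodular function and hence (again by ergodicity) a constant in $S^1$. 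Thus for each $\chi\in e(T)$ the unimodular eigenfunctions form a single coset of the circle of constants, in particular a compact subset of $L_0(X,S^1)$.

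Next I would assemble these fibres into one set. Fix a countable dense subgroup $G_0\subseteq G$ (which exists since $G$ is separable) and put
$$
E=\bset{(\chi,f)\in\hat G\times L_0(X,S^1):\ f\circ T_g=\chi(g)\,f\ \text{ for all }g\in G_0}.
$$
Two facts make $E$ closed, hence Borel. First, for fixed $g$ the map $f\mapsto f\circ T_g$ is continuous on $L_0(X,S^1)$: since $T_g$ is non-singular, $m\circ T_g\inv\ll m$, so $m(A_n)\to 0$ forces $(m\circ T_g\inv)(A_n)\to 0$ by absolute continuity, which is exactly preservation of convergence in measure. Second, $(\chi,f)\mapsto\chi(g)f$ is continuous because $\chi\mapsto\chi(g)$ is continuous on $\hat G$. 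So each condition $f\circ T_g=\chi(g)f$ cuts out a closed set and $E$, a countable intersection of such, is closed. Finally, since both $g\mapsto f\circ T_g$ and $g\mapsto\chi(g)f$ are continuous in $g$ — the former being the continuity exploited in the proof of Proposition \ref{denseSubgroupErgProp} — agreement on the dense subgroup $G_0$ forces agreement for all $g\in G$; hence $(\chi,f)\in E$ exactly when $f$ is a unimodular eigenfunction for $\chi$, and by the normalisation step $e(T)$ is precisely the projection of $E$ onto $\hat G$.

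The heart of the argument is now a projection/selection theorem. By the first paragraph every section $E_\chi$ is either empty or a single circle-coset, hence compact, so $E$ is Borel with compact (in particular $\sigma$-compact) sections and the Arsenin--Kunugui theorem applies. It yields simultaneously that $e(T)=\pi_{\hat G}(E)$ is Borel — the first assertion — and that $E$ admits a Borel uniformisation, i.e. a Borel map $s:e(T)\to L_0(X,S^1)$ with $(\chi,s(\chi))\in E$ for every $\chi$. I would then convert this into the desired $\psi$: a Borel map from a standard Borel space into $L_0(X,S^1)$ is induced by a jointly measurable function (approximate $s$ by countably-valued Borel maps, for which joint measurability is immediate, and pass to an a.e. limit along a fibrewise-convergent subsequence), giving a measurable $\psi:e(T)\times X\to S^1$ with $\psi(\chi,\cdot)=s(\chi)$ for each $\chi$. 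Since $s(\chi)$ is an eigenfunction for $\chi$ for every $g\in G$, the identity $\psi(\chi,T_gx)=\chi(g)\psi(\chi,x)$ then holds for $m$-almost every $x$, as required.

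I expect the main obstacle to be this descriptive-set-theoretic step, where two points must be handled with care: verifying that $E$ is genuinely Borel with compact sections so that Arsenin--Kunugui delivers both Borelness of the projection and a Borel uniformisation, and then carrying out the standard but delicate passage from a Borel map valued in $L_0$ to an honestly jointly measurable $\psi$. It is worth emphasising that ergodicity enters precisely here: it is what forces the sections to be single circle-cosets and hence compact, and without it the sections could fail to be $\sigma$-compact, so the selection machinery would not apply directly.
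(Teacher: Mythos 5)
Your proposal is correct, but it reaches the theorem by a genuinely different route than the paper. The paper works entirely inside $\Lp2{\sys}$: it forms the metric space $E$ of unimodular eigenfunctions, quotients by the circle of constants $\K$, and exhibits an explicit continuous bijection $P:E/\K\rightarrow e(T)$, $P(f\K)=\overline f\cdot(f\circ T)$; Borelness of $e(T)$ then comes from the Lusin--Souslin fact that an injective continuous image of a Polish space is Borel, and the selection is built by hand --- the normalisation $c(f)$ picks the phase of the first non-vanishing coefficient of $f$ against a fixed orthonormal basis, giving a canonical eigenfunction per eigenvalue and a map $\psi$ shown measurable in each variable \emph{separately}. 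You instead encode the eigenfunction correspondence as a closed graph in $\hat G\times L_0(X,S^1)$ and invoke Arsenin--Kunugui, letting one theorem deliver both the Borelness of the projection and a Borel uniformisation at once; the key input, exactly parallel to the paper's use of ergodicity for injectivity of $P$, is that ergodicity makes each section a single circle-coset, hence compact. What your route buys: it yields a \emph{jointly} measurable $\psi$ (strictly more than the paper establishes, and arguably the more natural reading of the statement), and it sidesteps the paper's somewhat delicate topological claims about $E$ (e.g.\ its asserted local compactness plays no role for you --- Borelness of the graph plus compact sections suffices). What it costs: a nontrivial descriptive-set-theoretic black box in place of an explicit construction, and the (harmless, given the paper's standing hypotheses) requirement that $G$ be second countable so that $\hat G$ and $L_0(X,S^1)$ are Polish. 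Two small points to tighten if you write this up: in passing from the Borel map $s:e(T)\rightarrow L_0(X,S^1)$ to $\psi$, choose countably-valued Borel approximations $s_n$ with $d(s_n(\chi),s(\chi))\le 2^{-n}$ uniformly in $\chi$ and use Borel--Cantelli, which gives a single sequence converging $m$-a.e.\ on every fibre and avoids any $\chi$-dependent subsequence; and note explicitly that once $(\chi,f)\in E$, the density argument makes $f'\overline f$ invariant under all of $G$, so plain ergodicity (rather than ergodicity of the $G_0$-action) already forces the sections to be cosets.
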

\begin{proof} Let $E = \{f\in \Lp2{\sys};~ f\text{ is an eigenfunction of $T$}, |f| = 1\}$, and define the metric 
$$
d(f,h) = \int_X |f-h|^2dm
$$
then $E$ is a complete locally compact separable metric space with respect to this topology (as a subspace of $\Lp2{\sys}$, which is second countable since $X$ is a standard space). Denote by $\mathbb K\subseteq E$ the set of constant functions, and define the metric $\rho:E/\mathbb K\times E/\mathbb K\rightarrow\mathbb R$ by:
$$
\rho(f\mathbb K,h\mathbb K) = \inf_{c\in\mathbb K}d(f,c\cdot h)
$$
Then $E/\mathbb K$ is also a complete separable locally compact metric space.\\
Define the function $P:E/\mathbb K\rightarrow e(T)$ by
$$
P(f\mathbb K) = \overline f\cdot \bb{f\circ T}
$$
$P$ is well defined since every eigenfunction has a unique eigenvalue, it is continuous as an elementary function, one to one, and onto. Then $e(T)$ is a Borel set as an image of a complete separable locally compact metric space under a continuous one to one map.\\\\
Let $\{h_n;n\in\mathbb N\}$ be an orthonormal basis for $\Lp2{\sys}$, and define the following sets:
$$
K_n = \left\{f\in E;~\forall 1\le k\le n-1.~\binr{f}{h_k} = 0 \text{ and } \binr{f}{h_n}\neq 0\right\}
$$
Then by definition for every $m\neq n.~K_n\cap K_m = \emptyset$ and $\biguplus_{n = 1}^\infty K_n\cup \{0\} = \Lp2{\sys}$. In addition, $K_n$ are Borel sets as intersection of open and closed sets (inner product is a continuous function). Define the function $c:E\rightarrow\mathbb K$ by
$$
c(f) = \sumit n 1\infty\frac{\overline{\binr{f}{h_n}}}{\abs{\binr{f}{h_n}}}\cdot\indic{K_n}(f)
$$
Note that $c$ is well defined. In addition, $c$ is indeed measurable as inner products are continuous and $K_n$ are measurable sets. Define $M:E\rightarrow E$ by $M(f) = c(f)\cdot f$. Let $f\in K_n$, then for every $a\in\T$:
$$
M(a\cdot f) = c(a\cdot f)\cdot(a\cdot f) = \frac{\overline{\binr{a\cdot f}{h_n}}}{\abs{\binr{a\cdot f}{h_n}}}\cdot (a\cdot f)  =
$$
$$
= \frac{a\cdot\overline a}{\abs a}\cdot\frac{\overline{\binr{f}{h_n}}}{\abs{\binr{f}{h_n}}}\cdot f =\abs a\cdot \frac{\overline{\binr{f}{h_n}}}{\abs{\binr{f}{h_n}}}\cdot f = \frac{\overline{\binr{f}{h_n}}}{\abs{\binr{f}{h_n}}}\cdot f  = M(f)
$$
Define the function $N:E/\mathbb K\rightarrow E$ by
$$
N(f\mathbb K) = M(f)
$$
since for every $a\in\mathbb T$, $M(af) = M(f)$, and the function is well defined. Finally, define $\psi:e(T)\times X\rightarrow\mathbb C$ by
$$
\psi(\chi,x) = (N(P^{-1}(\chi)))(x)
$$
First of all, we will show that for every $\chi\in\hat G$ the function $\psi(\chi,*)$ is measurable as a function of $x$. Note that $P\inv(\chi)$ gives us an eigenfunction, which is clearly measurable. Now, $N(P\inv(\chi))$ is a measurable function as a limit of such functions. In addition, $\psi$ is well defined and for every $\chi\in e(T)$, and for every $g\in G$:
$$
\psi(\chi,T_gx) = (N(P^{-1}(\chi)))(T_gx) = (N(f\mathbb K))(T_gx) = (M(f))(T_gx) = 
$$
$$
c(f)\cdot f(T_gx) = c(f)\cdot f(x)\cdot \chi(g) = \chi(g)\cdot (M(f))(x) = \chi(g)\cdot\psi(\chi,x)
$$
It is left to show that for every $x\in X$ the function $\psi(*,x)$ is a Borel map with respect to $\chi$. First we will show that $P\inv$ is indeed a Borel map. Let $U$ be a Borel set, then $P(U)$ is an analytic set as a continuous image of a Borel set in a Polish space. In addition, $P$ is an injective therefore $P(U^c) = P(U)^c$, and $P(U^c)$ is also an analytic set, since $U^c$ is a Borel set. We conclude $P\inv$ is a Borel map. Next, for every $n\in\N$ the set $K_n$ is a Borel set, therefore for every $n\in N$ the function $\frac{\overline{\binr{f}{h_n}}}{\abs{\binr{f}{h_n}}}\cdot f\indic{K_n}(f)$ is a Borel function, as a multiplication of two Borel functions (note that since $x\in X$ is fixed, $f(x)$ is a constant number). Since sum of Borel functions, and point-wise limit of Borel functions is a Borel function, then so is the function $N$. Overall the function $\psi(*,x)$ is a Borel function as a composition of Borel functions.
\end{proof}
\begin{rmk}
It would be interesting to have a generalized version of the eigenvalue theorem for Moore groups (see section 4).
\end{rmk}
\begin{defn}\label{properlyErgodicDef}
A non-singular action of $G$ on a standard probability space is called {\bf properly ergodic} if it is ergodic and every orbit has measure zero, i.e for every $x\in X$, $m\bb{\bset{T_gx;\;\;g\in G}}=0$.
\end{defn}
\begin{rmk}\label{spectralMeasReducedKoopRmk}
We denote by $\chi_0$ the constant character. If $T$ is probability preserving and ergodic, then $\sigma_0(\bset{\chi_0})=0$, where $\sigma_0$ is the spectral type of the reduced Koopman representation induced by $T$. The reason for that, is that for every eigenvalue $\chi$, $\sigma_0(\bset{\chi})>0$, but since $T$ is ergodic, if $\chi_0$ is an eigenvalue, then there exists an invariant function. By ergodicity every invariant function is constant, and every constant function in $\Lp2{\nsys}_0$ is zero. We conclude that $\chi_0$ cannot be an eigenvalue for $T$ and therefore $\sigma_0(\bset{\chi_0})=0$.
\end{rmk}
\subsection{The ergodic multiplier theorem}
In this section we will prove the ergodic multiplier theorem for actions of locally compact Polish abelian groups.
\begin{thm}[The ergodic multiplier theorem]\label{multipThm}
Let $G$ be a locally compact Polish abelian group, let $\sys$ be a standard space, and let $T:G\rightarrow NST(X)$ be a non-singular, and properly ergodic $G$-action. Let $\nsys$ be a standard probability space, $S:G\rightarrow PPT(Y)$ an ergodic $G$-action, and denote by $\sigma_0$ the spectral type of reduced Koopman representation, induced by the action $S$. Then the following are equivalent:
\begin{enumerate}
\item $T\times S$ is ergodic.
\item $\sigma_0(e(T)) = 0$.
\end{enumerate}
\end{thm}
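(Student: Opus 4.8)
The plan is to analyze $T\times S$-invariant functions through the spectral data of $S$ and to match them against the eigenvalue data of $T$ supplied by Theorem \ref{eigenvalueThm}. Work on $\Lp2{(X\times Y,\B\times\C,m\times\nu)}\cong\Lp2{\sys}\otimes\Lp2{\nsys}$. Since $S$ is probability preserving, the operators $U_g^S$ are genuinely unitary and carry no Radon--Nikodym factor, so a set $A\subseteq X\times Y$ is $T\times S$-invariant exactly when $F:=\indic A$ satisfies $F(T_gx,S_gy)=F(x,y)$ for every $g$ and $m\times\nu$-a.e. $(x,y)$; writing $\Phi(x):=F(x,\cdot)\in\Lp2{\nsys}$ this is equivalent to the equivariance $\Phi(T_gx)=U_{-g}^S\Phi(x)$. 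A first reduction: the $Y$-average $F_0(x)=\int_Y F(x,y)d\nu(y)$ is $T$-invariant (integrate the relation in $y$, using that $S$ preserves $\nu$), hence constant by ergodicity of $T$, so after subtracting it I may assume $\Phi(x)\in\Lp2{\nsys}_0$ a.e. For both implications I apply the spectral theorem (Theorem \ref{spectralThm}) to $\bset{U_g^S}$: its spectral measure $E$ on $\hat G$ has spectral type $\sigma_0$, with direct integral $\Lp2{\nsys}_0\cong\directint_{\hat G}\mathcal K_\chi\,d\sigma_0(\chi)$ on which $U_g^S$ acts fiberwise as multiplication by $\chi(-g)$, so that $U_{-g}^S=\int_{\hat G}\chi(g)\,dE(\chi)$. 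One standing technicality, handled once: each relation holds for fixed $g$ off an $x$-null set depending on $g$; fixing a countable dense subgroup $G_0\subseteq G$ and intersecting countably many conull sets gives one conull set valid for all $g\in G_0$, and Proposition \ref{denseSubgroupErgProp} shows the $G_0$-action is still ergodic, which is all the ergodicity I need.

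For (2)$\Rightarrow$(1) I argue the contrapositive. If $T\times S$ is not ergodic, then after the reduction there is a nonzero invariant $\Phi$ valued in $\Lp2{\nsys}_0$. For a Borel $B\subseteq\hat G$ the function $x\mapsto\norm{E(B)\Phi(x)}2^2$ is $T$-invariant, because $E(B)$ commutes with the unitary $U_{-g}^S$; hence it equals a constant $c(B)$, and $B\mapsto c(B)$ is a nonzero measure absolutely continuous with respect to $\sigma_0$. Passing to fibers, the field $x\mapsto\Phi(x)_\chi\in\mathcal K_\chi$ satisfies $\Phi(T_gx)_\chi=\chi(g)\Phi(x)_\chi$, and $\norm{\Phi(x)_\chi}{\mathcal K_\chi}^2$ is again $T$-invariant, hence a constant $\rho(\chi)$ with $\int\rho\,d\sigma_0>0$. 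On the $\sigma_0$-positive set $\bset{\rho>0}$, for each such $\chi$ a nonzero scalar component $x\mapsto\binr{\Phi(x)_\chi}{v}$ is a bona fide eigenfunction of $T$ with eigenvalue $\chi$, so $\bset{\rho>0}\subseteq e(T)$ up to a null set and therefore $\sigma_0(e(T))>0$.

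For (1)$\Rightarrow$(2) I again take the contrapositive and now build an invariant function from the eigenvalue data. Assume $\sigma_0(e(T))>0$, so $E(e(T))\neq 0$, and choose $\alpha\in\Lp2{\nsys}_0$ with $E(e(T))\alpha\neq 0$. Let $\psi:e(T)\times X\rightarrow S^1$ be the measurable map of Theorem \ref{eigenvalueThm}, satisfying $\psi(\chi,T_gx)=\chi(g)\psi(\chi,x)$, and define via the functional calculus of $E$ the partial isometry $\Psi_x=\int_{e(T)}\psi(\chi,x)\,dE(\chi)$, together with $\Phi(x):=\Psi_x\alpha$ and $F(x,y):=\Phi(x)(y)$. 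Because the functional calculus multiplies symbols and $U_{-g}^S=\int_{\hat G}\chi(g)\,dE(\chi)$, the relation for $\psi$ gives $\Phi(T_gx)=\int_{e(T)}\chi(g)\psi(\chi,x)\,dE(\chi)\,\alpha=U_{-g}^S\Phi(x)$, so $F$ is $T\times S$-invariant. Since $\abs{\psi}\equiv 1$, $\norm{\Phi(x)}2=\norm{E(e(T))\alpha}2>0$ is constant in $x$, so $F\neq 0$; and $\int_Y F(x,y)d\nu(y)=\binr{\Phi(x)}{\mathbf 1}=0$ forces $F$ to be non-constant. Hence $T\times S$ admits a non-constant invariant function and is not ergodic. (Ergodicity of $T$ is used in the reductions and constancy arguments; proper ergodicity enters to exclude the degenerate single-orbit situations, under which the correspondence between fiber fields and genuine eigenfunctions could collapse.)

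The main obstacle is the measure-theoretic rigor in (2)$\Rightarrow$(1): making the direct integral and the fiberwise relation $\Phi(T_gx)_\chi=\chi(g)\Phi(x)_\chi$ precise for $\sigma_0$-almost every $\chi$ simultaneously for all $g$, i.e. controlling the interplay between the $g$-dependent null sets and the $\chi$-disintegration (which is exactly what the countable-dense-subgroup reduction and Proposition \ref{denseSubgroupErgProp} are meant to absorb), and then extracting from the vector field $\Phi(\cdot)_\chi$ an honest nonzero scalar $\Lp2\sys$-eigenfunction on a $\sigma_0$-positive set of $\chi$. This is where measurable-selection arguments and the Borel structure of $e(T)$ from Theorem \ref{eigenvalueThm} are indispensable, and it is the step most likely to require careful bookkeeping.
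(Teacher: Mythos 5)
Your proposal is correct and follows the same two-contrapositive skeleton as the paper, but the technical implementation is genuinely different in both directions, in ways worth comparing. In your $(2)\Rightarrow(1)$ direction you invoke the Hahn--Hellinger direct-integral form of the spectral theorem, decomposing $\Lp2{\nsys}_0$ into fibers $\mathcal K_\chi$ and extracting scalar eigenfunctions $x\mapsto\binr{\Phi(x)_\chi}{v}$ from the fiber fields; the paper deliberately avoids this machinery via its ``scalar spectral theorem'' (Corollary \ref{sesquiCor}), whose sesquilinear map $h(\alpha,\beta)=dm_{\alpha,\beta}/d\sigma_0$ produces the eigenfunctions $f_\chi(x)=(h(\phi(x),f))(\chi)$ directly --- note that $h(\phi(x),f)(\chi)$ is exactly the fiberwise inner product $\binr{\Phi(x)_\chi}{f_\chi}$ in your picture, so the paper's device is precisely the bookkeeping shortcut for the measurable-field technicalities you flag as your main obstacle (measurable choice of $v=v(\chi)$, joint measurability of the disintegration, Fubini over the $g$-dependent null sets). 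In the converse direction, your functional-calculus operator $\Psi_x=\int_{e(T)}\psi(\chi,x)\,dE(\chi)$ applied to any $\alpha$ with $E(e(T))\alpha\neq 0$ is the same construction as the paper's $F(x,y)=V^{-1}(\phi(x))(y)$ on the cyclic subspace $\mathcal H_f$, but you avoid the paper's preliminary selection of a vector $f$ with $\mu_f(e(T))=\norm f{}^2$, and your non-constancy argument is cleaner: since $\Phi(x)\in\Lp2{\nsys}_0$ has constant norm $\norm{E(e(T))\alpha}{}>0$, a constant $F$ would force $F\equiv 0$, whereas the paper must argue that $\psi$ non-constant in $x$ fails only if $e(T)=\bset{\chi_0}$ and then appeal to Remark \ref{spectralMeasReducedKoopRmk}. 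Two small points you should still patch: your fiberwise relation only yields $\chi\in e(T^0)$ (eigenvalues of the $G_0$-action), and the final identification $e(T^0)=e(T)$ needs the continuity-in-measure argument of Proposition \ref{denseSubgroupErgProp}, which the paper states explicitly at the end of its first direction; and in the functional-calculus step the null sets in $\psi(\chi,T_gx)=\chi(g)\psi(\chi,x)$ depend on the pair $(\chi,g)$, so obtaining $\Phi(T_gx)=U^S_{-g}\Phi(x)$ for a.e.\ $x$ requires a (routine) Fubini argument over $(x,\chi)$ that your write-up elides. Neither is a genuine gap --- both are filled by arguments already present in the paper.
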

\begin{proof}
$(1)\Leftarrow(2)$: We will show that if $T\times S$ is not ergodic, then $\sigma_0(e(T))>0$. If $T\times S$ is not ergodic, then there exists $F:X\times Y\rightarrow\mathbb C$ non-constant invariant function. Without loss of generality, $F$ is bounded, since an indicator function on an invariant set would also suffice. Define the function $\phi:X\rightarrow \Lp2{\nsys}_0$ by:
$$
(\phi(x))(y) = F(x,y)-\int_{Y}F(x,y)d\nu(y)
$$
Note that $\phi(x)\in \Lp2{\nsys}_0$ and if $\phi$ is constant, then there exists $f\in \Lp2{\nsys}_0$ such that for $m$-almost every $x\in X$, $\phi(x) =f$, and $f(y) = F(x,y)$. But:
$$
f\circ S_g(y) = F(x,S_gy) = F(T_g(T_{-g}x),S_gy))=F(T_{-g}x,y) = f(y)
$$
$f$ is a $G$-invariant function. Since $S$ is an ergodic action, $f$ is constant and therefore so is $F$, but it was chosen to be a non-constant function. We conclude $\phi$ is not constant.\\
Denote by $Isom(\Lp2\nsys_0)$ the set of all invertible isometries of $\Lp2\nsys_0$, and by $U:G\rightarrow Isom(\Lp2\nsys_0)$ the mapping defined by $U_gf=f\circ S_{-g}$. Since $F$ is $T\times S$ invariant, it follows that for every $g\in G$, for $m$-almost every $x\in X$:
\begin{equation*}
\begin{split}
\phi(T_gx) =  F(T_gx,\cdot) =F(T_gx,S_g(S_{-g}(\cdot))) = \\
=F(x,S_{-g}(\cdot)) = \phi(x)\circ S_{-g}=U_g(\phi(x))
\end{split}
\end{equation*}
Using corollary \ref{sesquiCor}, there exists $h$ a non-trivial sesquilinear map \\
$h: \Lp2{\nsys}_0\times \Lp2{\nsys}_0\rightarrow L_1(\sigma_0)$ such that for every $f\in \Lp2{\nsys}_0$, every $g\in G$, and $m$ almost every $x\in X$:
\begin{equation}\label{sesquiInvEq}
\begin{split}
(h(\phi(T_gx),f))(\chi) = (h(\phi(x)\circ S_{-g},f))(\chi) =\\
=\chi(g)(h(\phi(x),f))(\chi)\;\;-\sigma_0 \text{ almost everywhere}
\end{split}
\end{equation}
Let $G_0\subseteq G$ be a dense countable subgroup, and denote by $T^0$ the action of $G_0$ on $\sys$. Denote by $\Xi_0\subseteq\hat G$ the set of elements such that equation (\ref{sesquiInvEq}) holds for every $g\in G_0$, it is of $\sigma_0$ full measure as countable intersection of such sets. Define for every $\chi\in\Xi_0$ and $f\in \Lp2{\nsys}_0$, the function $f_{\chi}: X\rightarrow \mathbb C$ by:
$$
f_{\chi}(x) = (h(\phi(x),f))(\chi)
$$
By equation (\ref{sesquiInvEq}), for every $g\in G_0$, and for $m$-almost every $x\in X$:
$$
(f_{\chi}\circ T_g)(x) = (h(\phi(T_gx),f))(\chi) = (h(\phi(x)\circ S_{-g},f))(\chi) =
$$
$$
=  \chi(g)(h(\phi(x),f))(\chi) = \chi(g)f_\chi(x)
$$
Therefore $f_\chi$ is an eigenfunction of the action of $G_0$. By proposition \ref{denseSubgroupErgProp}, the action of $G_0$ is ergodic as an action of a dense subgroup. Every eigenfunction of an ergodic action has constant absolute value, denote $|f_{\chi}| = c_{\chi}$.\\
To show that $\sigma_0(e(T^0))>0$ it is enough to show that for some $f\in \Lp2{\nsys}_0$ it is true that $\sigma_0\bb{\bset{\chi\in e(T^0);~c_{\chi}>0}}>0$.\\
If for every $f\in \Lp2{\nsys}_0$ the equality $\sigma_0\bb{\bset{\chi\in e(T^0);~c_{\chi}>0}}=0$ holds, then for every $f\in \Lp2{\nsys}_0$, $h(\phi(x),f) = 0$. By the Scalar Spectral Theorem, and the definition of $h$, for every $g\in G_0$ for $m$ almost every $x\in X$:
$$
\binr{\phi(x)}f = \int_{\hat G}h(\phi(x),f)(\chi)d\sigma_0(\chi) = 0
$$
And therefore, for almost every $x\in X$ the function $\phi(x)$ is constant with respect to Y. Define the function:
$$
E(x):=\int_{Y}F(x,u)d\nu(u)
$$
Then for $m\times\nu$-almost every $(x,y)\in X\times Y$:
$$
E(x):=\int_{Y}F(x,u)d\nu(u)=\int_{Y}(\phi(x))(u) d\nu(u)= (\phi(x))(y) =F(x,y) 
$$
In addition, $E$ is a $G_0$-invariant function, since $S$ is a probability preserving action, for $m$-almost every $x\in X$:
$$
(E\circ T_g)(x) = \int_{Y}F(T_gx,u)d\nu(u) =
$$
$$
= \int_{Y}F(T_gx,S_gu)d\nu(u)= \int_{Y}F(x,u)d\nu(u) = E(x)
$$
$T^0$ is ergodic by hypothesis, then $E$ is necessarily constant, but if $E$ is constant, then $F$ is constant, which is a contradiction.\\
We conclude that $\sigma_0(e(T^0))>0$. Finally, a proof as the one of proposition \ref{denseSubgroupErgProp}, shows that $e(T)=e(T^0)$, then $\sigma_0(e(T))=\sigma_0(e(T^0))>0$.\\

$(1)\Rightarrow(2)$: We will show that if $\sigma_0(e(T))>0$ then $T\times S$ is not ergodic. Since $T$ is a non-singular action, one may assume $m(X)<\infty$. If $\sigma_0(e(T))>0$, without loss of generality there exists $f\in \Lp2{\nsys}_0$ such that $\mu_f(e(T)) = \norm f{}^2$, where $\mu_f(A) = \binr{E(A)f}{f}$, and $E$ is the spectral measure that generates $\sigma_0$. This is since there exists $\mu<<\sigma_0$ such that $supp(\mu) = e(T)$ (for more information see \cite{Nadkarni1998} chapter 1).\\
Let $U_g$ denote the reduced Koopman representation of $S$, the action of $G$ in $\Lp2{\nsys}_0$. Denote $\mathcal H_f = \overline{span\{U_gf;~g\in G\}}$, and let $V:\mathcal H_f\rightarrow L_2(\hat G,\hat{\mathcal B},\mu_f)$ be the Hilbert space isometry defined as follows: for every $g\in G$ define
$$
(V(U_gf))(\chi) = \chi\bb{-g}\Rightarrow U_gf = V^{-1}\bb{\chi\bb{-g}}
$$
Continue defining $V$ as a linear transformation, then it is well defined on $\mathcal H_f$. Note that for every $g,h\in G$:
\begin{equation*}
\begin{split}
(V(U_g\circ U_h f))(\chi) =  (V(U_{g+h}f))(\chi) = \\
=\chi\bb{-\bb{g+h}} =\chi\bb{(-h)(-g)} = \chi\bb{-g}\cdot\chi\bb{-h}
\end{split}
\end{equation*}
Therefore
\begin{equation}\label{isometrySpanEq}
V^{-1}\bb{\chi\bb{-g}\cdot\chi\bb{-h}} = \bb{U_g\circ U_h f}(\chi)
\end{equation}
Since the set $span\bset{U_gf;~g\in G}$ is dense in $\mathcal H_f$, and for every $g\in G$ equality (\ref{isometrySpanEq}) holds, then for every $h\in\H_f$ and for every $g\in G$:
\begin{equation}\label{isometryEq}
V^{-1}(\chi(-g)\cdot h) = (U_g\circ V^{-1}(h))(\chi)
\end{equation}
We will show $V$ is an isometry. It is enough to show $V$ preserves the inner product for a dense subset in $\H_f$. Let there be $g\in G$, then:
$$
\norm{U_gf}{}^2 = \binr{U_gf}{U_gf} = \binr ff = \int_{Y}\norm f{}^2d\nu = \norm f{}^2 = \mu_f(\hat G)=
$$
$$
=\int_{\hat G}\mathbf 1 d\mu_f = \binr{\mathbf 1}{\mathbf 1} = \binr{\chi\bb{-g}}{\chi\bb{-g}} = \norm{\chi\bb{-g}}{}|^2 = \norm{V(U_gf)}{}^2
$$
By linearity of $V$ it is true for the linear span of $\{U_gf;~g\in G\}$ and by continuity of both $V$ and the inner product, for $\mathcal H_f$.\\
In addition, by 'the eigenvalue theorem' (theorem \ref{eigenvalueThm}) there exists a Borel function $\psi:e(T)\times X\rightarrow S^1$ such that for every $\chi\in e(T)$, every $g\in G$, and for $m$-almost every $x\in X$:
$$
\psi(\chi,T_gx) = \chi(g)\psi(\chi,x)
$$
Specifically, for every $\chi\in e(T)$ the transformation $f_\chi = \psi(\chi,*):X\rightarrow S^1$ is an eigenfunction of the eigenvalue $\chi$.\\
Now, let us look at the function $\phi: X\rightarrow \{\eta;~\eta:e(T)\rightarrow S^1\}$ defined by:
$$
(\phi(x))(\chi) = \psi(\chi,x)
$$
Remember that one can define the action of $G$ on $\hat G$ by $g(\chi) := \chi(g)$, this way any $g\in G$ becomes a function from $e(T)$ to $S^1$. By the properties of $\psi$ (equation (\ref{eigenEq})) for every $g\in G$ and $\chi\in e(T)$: 
\begin{equation*}
\begin{split}
(\phi(T_gx))(\chi) = \psi(\chi,T_gx) = \chi(g)\cdot\psi(\chi,x)=\\
 = (g(*)\cdot\phi(x))(\chi)\;\;m\text{- almost everywhere}
\end{split}
\end{equation*}
Define the function $F:X\times Y\rightarrow\mathbb C$ by:
$$
F(x,y) = V^{-1}(\phi(x))(y)
$$
First, note that $F$ is well defined almost everywhere, since for almost every $x\in X$ the function $\phi(x)$ is a function from $e(T)$ to $S^1$ and the domain of $V^{-1}$ is $\bset{f:\hat G\rightarrow\mathbb C}$. Next, by the definition of $V$, $V^{-1}(\phi(x))\in\mathcal H_f\subseteq \Lp2{\nsys}_0$. By using the definitions and equation (\ref{isometryEq}) for every $g\in G$:
$$
F(T_gx,S_gy) = U_g(V^{-1}(\phi(T_gx)))(y) = U_g(V^{-1}(g(*)\cdot\phi(x)))(y) = 
$$
$$
= U_g(U_{-g}\circ(V\inv(\phi(x))))(y) = V\inv(\phi(x))(y) = F(x,y)
$$
Which means $F(x,y)$ is $T\times S$ invariant.\\
It is left to show that $F$ is not constant. Since $V$ is an isometry, than so is $V\inv$ and therefore if we will show that $\psi$ is not constant with respect to $x$, then $F$ is not a constant function.\\
Let $G_0\subset G$ be a dense countable subgroup, and denote by $T^0$ the action of $G_0$ on $\sys$. Assume $\psi$ is constant with respect to $x$, and let there be $\chi\in e(T)$. Then there exists $x\in X$ such that for every $g\in G_0$, we have $\psi(\chi,x)=\psi(\chi,T_gx)$ and therefore
$$
\psi(\chi,x)=(\phi(x))(\chi)=(\phi(T_gx)(\chi)=\psi(\chi,T_gx)=\chi(g)\psi(\chi,x)
$$
We conclude that $\chi(g)=1$ for every $g\in G_0$, and by continuity of the characters for every $g\in G$. This is true for every $\chi\in e(T)$ and every $g\in G$, then $e(T)=\bset{\chi_0}$, but by remark \ref{spectralMeasReducedKoopRmk} $\sigma_0(\bset{\chi_0})=0$, which is a contradiction.
\end{proof}
\begin{rmk}
Note that given an eigenvalue theorem for a larger class of groups (for example Moore groups), one could easily extend the ergodic multiplier theorem for this class of group actions.
\end{rmk}
\section{Banach-Kronecker systems}
\subsection{Definitions \& Preliminaries}
\begin{defn}\label{topActSysDef}
Let $G$ be a locally compact Polish group, $X$ be a second countable compact topological space. A map $\pi:G\times X\rightarrow X$  is called a {\bf topological action} if $\pi$ has the following properties:
\begin{enumerate}
\item If $e$ is the identity element of $G$, then for every $x\in X$, $\pi(e,x)=x$.
\item $\pi$ is continuous with respect to both variables.
\item For every $g,h\in G$ and $x\in X$: $\pi(gh,x)=\pi(g,hx)$, where we have to following notation: $hx=\pi(h,x)$.
\end{enumerate}
\end{defn}
\begin{defn}\label{topDynSysDef}
Let $G$ be a locally compact Polish group, $X$ be a second countable compact topological space. If there exists a map, $\pi:G\times X\rightarrow X$, which is a topological action, then we say $(X,\pi,G)$ is a {\bf topological dynamical system}.
\end{defn}
\begin{defn}\label{equicontDef}
Let $(X,\pi,G)$ be a topological dynamical system. We say the system is {\bf equicontinuous} if for every $\eps>0$ there exists $\delta >0$ such that for every $x,y\in X$ if $d(x,y)<\delta$, then for every $g\in G$, $d(\pi(g,x),\pi(g,y))<\eps$.\\
\end{defn}
It is known that $X$ is a metric space, since it is compact and second countable. In addition, since $X$ is compact every two metrics on $X$ are equivalent, which means this property does not depend on the choice of metric.
\begin{defn}\label{transitiveDef}
Let $(X,\pi,G)$ be a topological dynamical system. We say $x_0\in X$ is a {\bf transitive point} if its orbit is dense in $X$, 
$$
\overline{O_G(x_0)}:=\overline{\bset{\pi\bb{g,x_0};~g\in G}}=X
$$
\end{defn}
\begin{defn}\label{minimalDef}
Let $(X,\pi,G)$ be a topological dynamical system. We say the system is {\bf minimal} if every $x\in X$ is a transitive point.
\end{defn}
\begin{lem}\label{existenceOfInvMetricLem}
Let $X$ be a compact second countable topological space, let $G$ be locally compact Polish group. If $(X,\pi,G)$ is an equicontinuous topological dynamical system, then there exists an invariant metric.
\end{lem}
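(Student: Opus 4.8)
The plan is to build the invariant metric by taking a supremum of a given compatible metric over the group orbit, and then to use equicontinuity to show that the resulting metric still generates the topology of $X$.

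First I would fix any metric $d$ on $X$ compatible with its topology; one exists because $X$ is compact and second countable, hence metrizable. Since $X$ is compact, $d$ is bounded, say by $\mathrm{diam}_d(X)<\infty$, so I can define
$$
\rho(x,y):=\sup_{g\in G} d(gx,gy),
$$
and this supremum is finite. Checking that $\rho$ is a metric is routine: symmetry and the triangle inequality pass to the supremum term by term, and positivity follows from the inequality $d(x,y)=d(ex,ey)\le\rho(x,y)$, which forces $\rho(x,y)=0$ to imply $d(x,y)=0$, i.e.\ $x=y$.

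The invariance is the formal heart of the construction and is essentially immediate: for any $h\in G$,
$$
\rho(hx,hy)=\sup_{g\in G} d\bb{g(hx),g(hy)}=\sup_{g\in G} d\bb{(gh)x,(gh)y}=\sup_{g'\in G} d(g'x,g'y)=\rho(x,y),
$$
where the middle equality uses the action identity $g(hx)=(gh)x$ from Definition \ref{topActSysDef}, and the last uses that right translation $g\mapsto gh$ is a bijection of $G$.

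The step I expect to be the real obstacle, and the only place equicontinuity is used, is showing that $\rho$ is compatible with the original topology, i.e.\ that $\rho$ and $d$ induce the same topology. One inclusion is free: since $d\le\rho$, the identity map $(X,\rho)\to(X,d)$ is $1$-Lipschitz, so every $d$-open set is $\rho$-open. For the converse I would invoke equicontinuity (Definition \ref{equicontDef}): given $\eps>0$, choose $\delta>0$ so that $d(x,y)<\delta$ implies $d(gx,gy)<\eps$ for every $g\in G$; taking the supremum over $g$ then yields $\rho(x,y)\le\eps$, so that $d(x,y)<\delta$ forces $\rho(x,y)\le\eps$. Since $\eps$ was arbitrary, this shows the identity map $(X,d)\to(X,\rho)$ is uniformly continuous, completing the equivalence of the two metrics. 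Consequently $\rho$ is a $G$-invariant metric generating the topology of $X$, as desired. It is worth noting that both the finiteness of $\rho$ and the compatibility argument use only that $X$ is compact and that the system is equicontinuous; in particular they require no hypothesis on the size or compactness of $G$.
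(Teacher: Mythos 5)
Your proof is correct and follows essentially the same route as the paper: both define the candidate metric as $\sup_{g\in G} d(gx,gy)$, get one topology inclusion for free from $d\le\rho$, and use equicontinuity for the reverse inclusion. Your write-up is in fact slightly more complete than the paper's, since you verify the $G$-invariance explicitly (via the bijectivity of right translation) where the paper leaves it implicit.
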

\begin {proof}
Denote by $d$ the metric of $X$. Define the metric $D:X\times X\rightarrow\R_+$ by $D(x,y) = \sup_{g\in G}d(\pi(g,x),\pi(g,y))$. It is not difficult to see it is indeed a metric. We will show it generates the same topology. On one hand, by definition $D(x,y)\ge d(x,y)$ and therefore $B_D(x,r)\subseteq B_d(x,r)$. We will show that for every $\eps>0$ there exists $\delta>0$ such that $B_d(x,\delta)\subseteq B_D(x,\eps)$.\\
Let there be $\eps>0$, then by equicontinuity there exists $\delta>0$ such that for every $x,y\in X$ such that $d(x,y)<\delta$ for every $g\in G$, $d(\pi(g,x),\pi(g,y))<\frac\eps2$. We conclude that $B_d(x,\delta)\subseteq \overline{B_D\bb{x,\frac\eps2}}\subset B_d(x,\eps)$. Overall the topologies generated by the two metrics are the same.
\end {proof}
\begin{defn}\label{isometriesTopologyDef}
Let $X$ be a metric space, and let us look at the collection of invertible homeomorphisms defined on $X$. Define the following metric on the group of invertible homeomorphisms:
$$
\Delta(\varphi,\psi)=\underset{x\in X}\sup\;\;d(\varphi(x),\psi(x))+\underset{x\in X}\sup\;\;d(\varphi\inv(x),\psi\inv(x))
$$
The topology induced by this norm is called the {\bf compact open topology}.
\end{defn}
\begin{defn}\label{homogenousDef}
A dynamical system, $(K/H,\pi,G)$, is called {\bf a homogeneous system} if $K$ is a compact topological group, $H\le K$ is a closed subgroup, and $G$ is embedded in $K$ as a dense subgroup, acting on $K/H$ by left translations, meaning $\pi(g,k)=\tilde gk$, where $\tilde g$ is the embedding of $g$ in $K$.
\end{defn}
\begin{lem}\label{invErgMeasureLem}
Let $X$ be a compact metric space, $G$ a locally compact Polish group, $(X,\pi,G)$ a topological dynamical system, which is minimal and equicontinuous. Then there exists a homogeneous system $(K/H,\tilde\pi,G)$ such that $(X,\pi,G)$ is topologically isomorphic to a translation on $(K/H,\tilde\pi,G)$. Moreover, there exists an action invariant ergodic probability measure defined on $X$.
\end{lem}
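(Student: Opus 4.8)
The plan is to realize $X$ as a homogeneous space of the closure of the acting group inside the isometry group of an invariant metric, and then transport the Haar measure. First I would invoke Lemma \ref{existenceOfInvMetricLem} to obtain an invariant metric $D$ on $X$: because the system is equicontinuous, $D(x,y)=\sup_{g\in G}d(\pi(g,x),\pi(g,y))$ is a metric equivalent to the original one, and by the cocycle identity $\pi(gh,\cdot)=\pi(g,\cdot)\circ\pi(h,\cdot)$ each $\pi(g,\cdot)$ is a surjective $D$-isometry of $X$. Let $\mathrm{Iso}(X,D)$ denote the group of surjective isometries of $(X,D)$ with the compact-open topology of Definition \ref{isometriesTopologyDef}, which on isometries coincides with uniform convergence; since $X$ is compact the isometries form a uniformly equicontinuous, uniformly bounded family, so by Arzel\`a--Ascoli their closure in $C(X,X)$ is compact, limits of isometries are isometries, and $\mathrm{Iso}(X,D)$ is a compact topological group. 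I would then set $K=\overline{\{\pi(g,\cdot):g\in G\}}$, the closure of the image of $G$; as the closure of a subgroup of a compact group it is a compact subgroup, and $G$ maps onto a dense subgroup of $K$ through the continuous homomorphism $\tilde g:=\pi(g,\cdot)$. Continuity into the uniform topology follows from joint continuity of $\pi$ and compactness of $X$: writing $g_n=g h_n$ with $h_n\to e$ and using invariance of $D$ reduces it to $\sup_x D(\pi(h_n,x),x)\to 0$, which a standard subnet/compactness argument gives.

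Next I would fix a base point $x_0\in X$ and consider the orbit map $k\mapsto k(x_0)$ from $K$ to $X$, with stabilizer $H=\{k\in K:k(x_0)=x_0\}$, a closed and hence compact subgroup. The crucial use of minimality occurs here: the $K$-orbit of $x_0$ contains the $G$-orbit, which is dense by Definition \ref{minimalDef}, while $Kx_0$ is compact and therefore closed, so $Kx_0=X$. Thus the orbit map descends to a continuous bijection $\Phi:K/H\to X$, $\Phi(kH)=k(x_0)$, and since $K/H$ is compact and $X$ is Hausdorff, $\Phi$ is a homeomorphism. It is equivariant, because $\Phi(\tilde g\,kH)=\tilde g(k(x_0))=\pi(g,\Phi(kH))$, so $\Phi$ conjugates left translation by $\tilde g$ on $K/H$ to the action $\pi(g,\cdot)$ on $X$. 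This exhibits $(X,\pi,G)$ as topologically isomorphic to the homogeneous system $(K/H,\tilde\pi,G)$ of Definition \ref{homogenousDef}.

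For the measure I would push the normalized Haar measure $\mu_K$ of the compact group $K$ to a probability measure $\bar\mu$ on $K/H$ via the quotient map, and set $m_X=\Phi_*\bar\mu$; by construction $\bar\mu$ is $K$-invariant, so $m_X$ is a $G$-invariant Borel probability measure on $X$. Ergodicity I would establish in two stages. First, any $G$-invariant measurable $A\subseteq K/H$ is in fact $K$-invariant: the translation action of $K$ on $L^1(K/H,\bar\mu)$ is strongly continuous, $\tilde g\cdot\mathbf 1_A=\mathbf 1_A$ for every $g\in G$, and $G$ is dense in $K$, so $k\cdot\mathbf 1_A=\mathbf 1_A$ for all $k\in K$. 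Second, since $K$ acts transitively on $K/H$ preserving $\bar\mu$, averaging gives triviality: writing $\bar h(p)=\int_K\mathbf 1_A(k\cdot p)\,d\mu_K(k)$ and using transitivity together with right-invariance of $\mu_K$ shows $\bar h$ is constant equal to $\bar\mu(A)$, while $K$-invariance of $A$ and Fubini give $\bar h=\mathbf 1_A$ $\bar\mu$-a.e.; hence $\mathbf 1_A=\bar\mu(A)$ a.e. and $\bar\mu(A)\in\{0,1\}$. Transporting through $\Phi$, $m_X$ is ergodic.

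I expect the main obstacle to be establishing compactness of the enveloping group $K$ rigorously, that is, verifying that the $D$-isometries form an Arzel\`a--Ascoli-precompact, closed, inversion-stable family and that $g\mapsto\pi(g,\cdot)$ is continuous into the uniform topology, since this is exactly the step where equicontinuity (through the invariant metric of Lemma \ref{existenceOfInvMetricLem}) and compactness of $X$ must be combined. Once $K$ is known to be compact, the homogeneous-space description and the Haar-measure transport are comparatively routine, with the only further care needed in upgrading $G$-invariance to $K$-invariance for the ergodicity claim via density and strong continuity of translation.
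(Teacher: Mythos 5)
Your proposal is correct, and its first half follows the paper's proof almost verbatim: the paper also defines $T_g(x)=\pi(g,x)$, invokes Arzel\'a--Ascoli via equicontinuity to get a compact enveloping group $K=\overline{\{T_g;\,g\in G\}}$ inside the homeomorphisms with the compact-open topology, takes the stabilizer $H$ of a base point $x_0$, and shows the orbit map $kH\mapsto k(x_0)$ is a continuous bijection from the compact space $K/H$, hence a homeomorphism conjugating $\pi$ to left translation. Your variant of working inside $\mathrm{Iso}(X,D)$ for the invariant metric $D$ of Lemma \ref{existenceOfInvMetricLem} is a mild cleanup rather than a new idea: it makes closure under limits automatic (uniform limits of isometries of a compact metric space are surjective isometries), a point the paper's appeal to ``compact subgroup of the group of homeomorphisms'' leaves implicit. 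Where you genuinely diverge is the ergodicity claim. The paper is soft here: it observes that the set of invariant probability measures is nonempty (it contains the projected Haar measure), compact and convex, and extracts \emph{some} ergodic invariant measure as an extreme point via Krein--Milman, without identifying it. You instead prove directly that the projected Haar measure $\bar\mu$ itself is ergodic: you upgrade $G$-invariance of a set to $K$-invariance using density of $G$ in $K$ and strong continuity of translation on $L^1(K/H,\bar\mu)$ (a step the Krein--Milman route never needs, and which you correctly flag as requiring care), and then average over $K$ using bi-invariance of Haar measure on the compact group $K$ together with transitivity to force $\indic A=\bar\mu(A)$ a.e. Your argument is longer but buys a strictly stronger and more explicit conclusion --- the canonical homogeneous measure is ergodic (and your density argument in fact shows every invariant measure's invariant sets are $K$-invariant, pointing toward unique ergodicity) --- whereas the paper's argument is two lines but purely existential. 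Both are complete proofs of the lemma as stated.
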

\begin{proof}
Define $T:G\rightarrow InvHom(X)$ by $T_g(x)=\pi(g,x)$, then since the action is equicontinuous by Arzel\'a-Ascoli theorem the set $\bset {T_g;~g\in G}$ is a precompact subset of the set of homeomorphisms of $X$, endowed with the compact open topology. Denote by $K:=cls\bset {T_g;~g\in G}$, then $K$ is a compact subgroup of the group of homeomorphisms. Fix $x_0\in X$ and define $H:=cls\bset{T\in K;~Tx_0=x_0}$.\\
Define $\varphi:K/H\rightarrow X$ by $\varphi(TH)=Tx_0$.\\
{\bf $\varphi$ is continuous:} Let $T_nH,TH\in K/H$ such that $T_nH\underset{n\rightarrow\infty}\longrightarrow TH$ in the compact open topology. We will show that $\varphi(T_nH)=T_nx_0\underset{n\rightarrow\infty}\longrightarrow Tx_0=\varphi(TH)$. If  $T_nH\underset{n\rightarrow\infty}\longrightarrow TH$, then there exists a sequence $\bset{S_n}\subset H$ such that $T_nS_n\rightarrow T$ in the compact open topology. If $T_nx_0\not\rightarrow Tx_0$, then:
$$
0 = \limit n\infty\Delta(T_nS_n,T)\ge \underset{n\rightarrow\infty}\limsup\;\; d(T_nS_nx_0,Tx_0)>0
$$
which is a contradiction. We conclude that $\varphi$ is continuous. Now, the original action is minimal, and therefore the image of $\bset{T_g;~g\in G}/H$ is a dense subset of $X$, and since $K/H$ is closed and $\varphi$ is continuous, then the image of $K/H$ is closed and therefore $\varphi$ is onto $X$.\\
{\bf $\varphi$ is a bijection:} Let there be $T_1H,T_2H\in K/H$ such that $T_1H\neq T_2H$ and $\varphi(T_1H)=T_1x_0=T_2x_0=\varphi(T_2H)$, then $T_2\inv T_1\in H$, which is a contradiction. We conclude $\varphi$ is a bijection.\\
Since $K/H$ is compact, $\varphi$ is an isomorphism as a continuous bijection.\\
Define the topological action of $G$ on $K/H$ by left multiplication $\tilde\pi(g,TH)=(T_gT)H$. Then:
$$
\pi(g,\varphi(TH))=T_g(\varphi(TH))=T_g(Tx_0)=(T_gT)(x_0)=\varphi(\tilde\pi(g,TH))
$$
We conclude that $(X,\pi,G)$ is isomorphic as a topological dynamical system to the dynamical system $(K/H,\tilde\pi,G)$, a homogeneous system. Since $K/H$ is a compact set the projection of the Haar measure onto $K/H$ is an invariant probability measure for $T$.\\
Now, the set of invariant measures is a compact convex subset, by the Krein-Milman theorem, it is the convex hull of its extreme points. The action invariant ergodic measures are the extreme points of this set, specifically there exists an ergodic $G$-invariant measure.
\end{proof}
\begin{lem}\label{posMeasureBallLem}
Let $X$ be a compact metric space, $(X,G)$ be a topological dynamical system. Assume there exists an invariant metric $D$, and that the action is minimal. Then any non-singular measure is globally supported.
\end{lem}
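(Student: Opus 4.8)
The plan is to prove the equivalent statement that every open $D$-ball carries positive $m$-measure; for a nonzero Borel measure this is precisely the assertion that its support is all of $X$, i.e.\ that $m$ is globally supported. So I fix an arbitrary $y\in X$ and radius $r>0$ and aim to bound $m(B_D(y,r))$ away from zero. The first step is to produce a single ball of the right size having positive measure: since $X$ is compact, the open cover $\bset{B_D(x,r/2):x\in X}$ admits a finite subcover $\bunion i1n B_D(x_i,r/2)=X$, and because $m(X)>0$ at least one of these balls, say $B_D(x_0,r/2)$, satisfies $m\bb{B_D(x_0,r/2)}>0$.

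Next I would slide this ball next to $y$ using minimality, and this is exactly where the invariant metric is indispensable. By minimality the orbit of $x_0$ is dense, so there is $g\in G$ with $D(T_gx_0,y)<r/2$. Since $D$ is $G$-invariant and $T_g$ is a homeomorphism, $T_g$ carries balls to balls of the same radius, $T_gB_D(x_0,r/2)=B_D(T_gx_0,r/2)$, and the triangle inequality yields the inclusion $B_D(T_gx_0,r/2)\subseteq B_D(y,r)$.

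Finally I would invoke non-singularity to transport positivity of measure. Setting $C=T_gB_D(x_0,r/2)$, so that $T_g\inv C=B_D(x_0,r/2)$, the defining property of a non-singular transformation (Definition \ref{nstDef}) gives $m(C)=0$ if and only if $m(T_g\inv C)=0$; since $m\bb{B_D(x_0,r/2)}>0$ we conclude $m\bb{T_gB_D(x_0,r/2)}>0$, and therefore $m(B_D(y,r))\ge m\bb{B_D(T_gx_0,r/2)}>0$. As $y$ and $r$ were arbitrary, $m$ is globally supported.

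I expect the main subtlety to be the middle step: the whole argument rests on being able to push a positive-measure ball onto a ball about the arbitrary point $y$ with \emph{no distortion}, which is exactly what $G$-invariance of $D$ provides. Without an invariant metric, $T_g$ could deform $B_D(x_0,r/2)$ into a set that is not a ball and the crucial inclusion into $B_D(y,r)$ could fail, so the hypothesis of Lemma \ref{posMeasureBallLem} (rather than an arbitrary metric, which by Lemma \ref{existenceOfInvMetricLem} exists for equicontinuous systems) is doing real work here. A secondary point requiring care is fixing the orientation of the non-singularity condition so that it preserves positivity, and not merely nullity, of measure under $T_g$.
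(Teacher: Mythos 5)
Your proof is correct and uses the same ingredients in the same way as the paper's: compactness to extract a finite cover of balls, minimality to translate along a dense orbit, $G$-invariance of $D$ so that $T_g$ carries balls to balls of the same radius, and non-singularity to transport (non-)nullity. The only difference is organizational — the paper argues by contradiction, covering $X$ by null translates $T_{g_n}B_D(x_0,\eps_0)$ of an assumed null ball to force $m(X)=0$, while you argue directly by locating a positive-measure ball via pigeonhole and sliding it into an arbitrary target ball — which is just the contrapositive arrangement of the identical idea.
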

\begin{proof}
Denote by $T$ the action of $G$ on $X$, and let $m$ be a non-singular measure. If $m$ is not globally supported, then there exists $x_0\in X$, and $\eps_0>0$ such that $m(B_D(x_0,\eps_0))=0$. Since the action is minimal the collection $\bset{B_D(T_gx_0,\eps_0)}$ is a cover for $X$. Since $X$ is compact there exists a finite sub-cover $\bset{B_D(T_{g_n}x_0,\eps_0)}_{n=1}^N$. Then:
$$
1=m(X)=m\bb{\bunion n 1 NB_D(T_{g_n}x_0,\eps_0)}\le
$$
$$
\le\sumit n 1 N m\bb{B_D(T_{g_n}x_0,\eps_0)}\overset{(\star)}=\sumit n 1 N m\bb{T_{g_n}\bb{B_D(x_0,\eps_0)}}
$$
Where $(\star)$ is because the metric is action invariant, and therefore- \\
$B_D(T_gx_0,\eps_0)=T_g\bb{B_D(x_0,\eps_0)}$. Now if $m\bb{B_D(x_0,\eps_0)}=0$, then for every $n$, $m\bb{T_{g_n}\bb{B_D(x_0,\eps_0)}}=0$, since $m$ is a non-singular measure, but then:
$$
1=m(X)\le\dots\le \sumit n 1 N m\bb{T_{g_n}\bb{B_D(x_0,\eps_0)}}=0
$$
Since this is not true, for every $x\in X$ and $\eps>0$, $m(B_D(x,\eps))>0$.
\end{proof}
\begin{rmk}
Note that this lemma is still true if $X$ is Polish and not compact. In this case the sub-cover is not finite, but countable, and the same proof holds.
\end{rmk}
\subsection{Banach-Kronecker systems are not weakly mixing}
\begin{defn}\label{BanachKronDef}
Let $X\subset \R^d$ be a closed bounded set, let $G$ be a locally compact Polish group. We say the linear topological dynamical system $(X,\pi,G)$ is a {\bf Banach-Kronecker system}, if $(X,\pi,G)$ is minimal and equicontinuous.
\end{defn}
\begin{rmk}
Note that according to lemma \ref{invErgMeasureLem}, there exists an action invariant measure, specifically there exist non-singular measures.
\end{rmk}
\begin{thm}\label{banachKronThm}
Let $(X,\pi,G)$ be a Banach-Kronecker system, denote by $T$ the topological action, meaning $T_g(x)=\pi(g,x)$. Then for any non-singular measure, $m\sim m\circ T$, the action is not weakly mixing.
\end{thm}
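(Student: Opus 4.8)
The plan is to produce a \emph{single} probability preserving ergodic $G$-action $S$ for which the product $T\times S$ fails to be ergodic; by the very definition of weak mixing this already shows that $T$ is not weakly mixing, so there is no need to quantify over all such actions. Throughout I would assume that $X$ has more than one point, since otherwise the system is a single point, the action is trivial, and the statement is vacuous.

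First I would isolate the ergodic factor. Since $(X,\pi,G)$ is minimal and equicontinuous, Lemma \ref{invErgMeasureLem} furnishes an action invariant ergodic probability measure $\lambda$ on $X$. I let $S:=T$, but now regard it as acting on the standard probability space $(X,\B,\lambda)$; invariance of $\lambda$ makes each $T_g$ measure preserving, and continuity of $S:G\to PPT(X)$ follows from continuity of $\pi$ (for continuous $f$ one has $f\circ T_g\to f\circ T_{g_0}$ boundedly and pointwise, hence in $L_2(\lambda)$, and continuous functions are dense). Thus $S$ is a probability preserving ergodic $G$-action. Note that $\lambda$ has nothing to do with the given non-singular $m$: the first coordinate of the product will carry $m$ and the second will carry $\lambda$, and since $T$ is non-singular for $m$ while $S$ preserves $\lambda$, the product action $(T\times S)_g(x,y)=(T_gx,T_gy)$ is non-singular for $m\times\lambda$, so that its ergodicity is well defined.

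Next I would build an invariant function on the product. By equicontinuity, Lemma \ref{existenceOfInvMetricLem} gives a metric $D$ on $X$, topologically equivalent to the ambient Euclidean one, of the form $D(x,y)=\sup_{h\in G}d(\pi(h,x),\pi(h,y))$. Reindexing $h\mapsto hg$ and using the action identity $\pi(hg,x)=\pi(h,\pi(g,x))$ shows that $D$ is $G$-invariant, i.e. $D(T_gx,T_gy)=D(x,y)$ for every $g\in G$. Hence the function $F:X\times X\to\R$ defined by $F(x,y)=D(x,y)$ is continuous, bounded (as $X\subset\R^d$ is bounded), and invariant under the diagonal action $T\times S$.

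Finally I would argue that $F$ is genuinely non-constant modulo $m\times\lambda$, which is the only step that is not bookkeeping. Both $m$ and $\lambda$ are non-singular (the latter being invariant), so by Lemma \ref{posMeasureBallLem} each is globally supported, and therefore $m\times\lambda$ charges every non-empty open subset of $X\times X$. Since $X$ has at least two points and is compact, $r:=\max_{(x,y)\in X\times X}D(x,y)>0$; fixing $0<\eps<r$, the set $\{F>\eps\}$ is open, non-empty and $(T\times S)$-invariant, while $\{F<\eps\}$ is open and non-empty (it contains every diagonal point $(x,x)$, where $F=0$). Both sets then have positive measure, so $\{F>\eps\}$ is an invariant set of measure strictly between $0$ and $1$, witnessing that $T\times S$ is not ergodic, and hence that $T$ is not weakly mixing. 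I expect the main (indeed the only real) obstacle to be this last verification of non-triviality of $F$, for which the global support provided by Lemma \ref{posMeasureBallLem} is exactly what is needed; everything else reduces to the three lemmas already established.
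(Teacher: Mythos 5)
Your proof is correct, and while it follows the same strategic skeleton as the paper's --- pair $(X,m)$ with the invariant ergodic probability measure supplied by Lemma \ref{invErgMeasureLem}, then exhibit a non-trivial invariant set concentrated near the diagonal, with positivity of measures coming from Lemma \ref{posMeasureBallLem} --- your construction of the invariant set is genuinely different and simpler. The paper works with the ambient Euclidean metric $d$, bounds the operator norms $\norm{T_g}{op}$ by a constant $C$ via compactness, defines $D_\eps=\{(x,y):d(x,y)<\eps/(2C)\}$, and then saturates it under the diagonal action of a countable dense subgroup $G_0$ to obtain a set $O_\eps$ whose $G$-invariance requires a continuity-in-measure argument and whose containment in $\{d<\eps\}$ requires the norm bound; non-triviality is finished with a non-atomicity remark. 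You instead observe that the invariant metric $D$ of Lemma \ref{existenceOfInvMetricLem} is itself a bounded continuous function on $X\times X$ that is \emph{exactly} invariant under the diagonal action (via the reindexing $h\mapsto hg$), so the super-level set $\{D>\eps\}$ is strictly invariant with no saturation, no dense subgroup, and no constant $C$; non-triviality then falls out of global support of both $m$ and $\lambda$ (both legitimately covered by Lemma \ref{posMeasureBallLem}, since $m$ is non-singular by hypothesis and $\lambda$ is invariant), with no non-atomicity assumption --- your argument even covers finite minimal systems with at least two points, where the paper's closing parenthetical is loose. Two small remarks: your one-point caveat is apt but the statement there is not merely vacuous --- a one-point system \emph{is} weakly mixing, so some non-triviality hypothesis is genuinely needed, exactly as in the paper's implicit non-atomicity assumption; and when concluding, it is cleaner to say that $\{D>\eps\}$ and its complement both have positive measure (so the set is neither null nor co-null) rather than "measure strictly between $0$ and $1$", since $m$ is only assumed to be a non-singular measure and need not be normalized.
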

\begin{proof}
Let $m$ be a non-singular measure, if it is not $G$-ergodic, then clearly the system is not weakly mixing. We will show that if $m$ is ergodic, then there exists a probability invariant measure $P$ such that $(X\times X, \B\times\B,m\times P)$ is not ergodic.\\
Note that Banach-Kronecker systems are by definition minimal and equicontinuous, by lemma \ref{invErgMeasureLem} there exists a probability measure $P$ which is action invariant and ergodic. Define the following norm on linear homeomorphisms:
$$
\norm{A}{op}=\underset{\norm{x}{}=1}\sup\norm{Ax}{}
$$
Define the following metric on $X$, $d(x,y)=\norm{x-y}{}$. There exists a constant $C$ such that for every $g\in G$, $\norm{T_g}{op}<C$- if not then there exists a sequence $\bset{g_n}\subseteq G$ and $\bset{x_n}\subset X$ such that $\limit n\infty\norm{T_{g_n}x_n}{}=\infty$. But, norm is a continuous function, $X$ is compact, and therefore is has a global finite maximum, which is a contradiction.\\
For every $\eps>0$ define the set 
$$
D_\eps:=\bset{(x,y)\in X\times X;d(x,y)<\frac \eps{2C}}
$$
This set is an open set and therefore Borel measurable, and for every $g\in G$:
$$
d(T_g(x),T_g(y)) \le \norm{T_g}{op}\cdot d(x,y)<C\cdot d(x,y)<\eps
$$
Let $G_0\subseteq G$ be a dense countable subgroup in $G$. Define the set 
$$
O_\eps=\bset{(T_g(x),T_g(y)); g\in G_0, (x,y)\in D_\eps}
$$
First, this set is measurable as a countable union of measurable sets. Next, $e\in G_0$ and therefore $D_\eps\subseteq O_\eps$. In addition, it is clearly $G_0$-invariant by definition, and it is also $G$-invariant, by continuity in measure of the action of $T\times T$ on $L_2(X\times X,\B\times \B,m\times P)$. Finally, note that since $T_gT_h=T_{gh}$, then for every $(x,y)\in O_\eps$ and $g\in G$, there exists $x_0,y_0\in D_\eps$ and $h\in G_0$ such that:
$$
d(T_gx,T_gy) = d(T_gT_hx_0,T_gT_hy_0)=
$$
$$
=d(T_{gh}x_0,T_{gh}y_0)\le \norm{T_{gh}}{op}d(x_0,y_0)<C\cdot\frac\eps{2C}<\eps
$$
Which means $O_\eps\subseteq\bset{(x,y)\in X\times X;~d(x,y)<\eps}$.\\
Denote by $D$ the action invariant metric (such a metric exists by lemma \ref{existenceOfInvMetricLem}). Note that the topology generated by $d$ and the one generated by $D$ are the same. $X$ is a compact metric space, therefore all the metrics defined on it are equivalent, and there exist $M,m>0$ such that 
$$
m\cdot d(x,y)\le D(x,y)\le M\cdot d(x,y)
$$
which means:
$$
B_d\bb{x,\frac {r\cdot m}M}\subseteq B_D\bb{x,r\cdot m}\subseteq B_d(x, r)
$$
Then:
$$
m\times P(D_\eps) = \integrate X{}{P\bb{B_d\bb{x,\frac\eps{2C}}}}m \ge\integrate X{}{P\bb{B_D\bb{x,m\cdot\frac\eps{2C}}}}m
$$
Now, the action is minimal and probability preserving, $D$ is an invariant metric, by lemma \ref{posMeasureBallLem}, for every $x\in X$, $P\bb{B_D\bb{x,m\cdot\frac\eps{2C}}}>0$, and therefore for every $\eps>0$, the integral is positive, which means:
$$
m\times P(O_\eps)\ge m\times P(D_\eps)>0
$$
If we will show that there exists $\eps>0$ such that $m\times P(O_\eps)<1$ then we are done.\\
We saw that $O_\eps\subseteq \bset{(x,y);~d(x,y)<\eps}$. Evidently there exists $\eps>0$ such that-
$$
m\times P\bb{\bset{(x,y);~d(x,y)<\eps}}<1
$$
(Otherwise $m\times P(\bset {(x,x);~x\in X}) = 1$ for some $x\in X$, which is impossible for a product measure if at least one of the measures involved is non-atomic).
\end{proof}
\section{The weak mixing theorem for non-commutative groups}
\begin{defn}[Definition 4.1.3 in \cite{Tomiyama1987}]\label{mooreGroupDfn}
A locally compact group is called a {\bf Moore group} if all irreducible unitary representations are finite dimensional.
\end{defn}
\begin{examp}
According to the spectral theorem, theorem \ref{spectralThm}, every abelian group is a Moore group.
\end{examp}
We will now prove the main theorem:
\begin{thm} \label{grandThm}
Let $G$ be a locally compact Polish Moore group, and let $T:G\rightarrow NST(X)$ be an ergodic action defined on a standard measure space $\sys$. Then the action is weakly mixing if and only if there is no non-trivial finite dimensional, $G$-invariant subspace of $\Lp\infty{\sys}$.
\end{thm}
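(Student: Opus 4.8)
The plan is to reduce the statement for Moore groups to the already-established representation-theoretic machinery, chiefly Theorem \ref{bergelsonRosenblattThm} and Proposition \ref{weak&measureProp}, and to exploit the defining property of Moore groups that every irreducible unitary representation is finite dimensional. The two directions are asymmetric, so I would treat them separately. For the easy direction, suppose there exists a non-trivial finite dimensional $G$-invariant subspace $V\subseteq\Lp\infty{\sys}$. Since $\Lp\infty{\sys}\subseteq\Lp2{\sys}$ on a probability space, $V$ is also a finite dimensional $G$-invariant subspace of $\Lp2{\sys}$ on which the Koopman representation $U$ acts. The subtlety is that $U$ is only an isometric representation (the action is merely non-singular, not measure preserving), so I cannot directly invoke the unitary-representation dichotomy. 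The key observation, established in the proof of Proposition \ref{weak&measureProp}, is that the restriction of $U$ to any finite dimensional invariant subspace is in fact \emph{orthogonal} (the matrices $A_g$ are isometries of $\R^d$), so $V$ carries a genuine finite dimensional unitary subrepresentation. I would then run the argument of Proposition \ref{weak&measureProp} to produce an absolutely continuous invariant probability measure $d\mu=\phi\,dm$ from the orthonormal basis of $V$; the existence of such an invariant measure is exactly what obstructs weak mixing, since one can then build a non-trivial invariant function on a suitable product and contradict ergodicity of the product action. Thus a non-trivial finite dimensional $G$-invariant subspace of $\Lp\infty{\sys}$ forces the action to fail weak mixing.

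For the harder direction, I would prove the contrapositive: assume the action is \emph{not} weakly mixing and produce a non-trivial finite dimensional $G$-invariant subspace inside $\Lp\infty{\sys}$ (not merely inside $\Lp2{\sys}$). Failure of weak mixing means there is a probability preserving ergodic action $S$ of $G$ on some $\nsys$ with $T\times S$ non-ergodic, giving a non-constant invariant function $F\in\Lp\infty{X\times Y}$, which may be taken bounded. The plan is to decompose $F$ along the Koopman representation of $S$. Here is where the Moore hypothesis enters decisively: by Theorem \ref{bergelsonRosenblattThm} and Remark \ref{bergelsonCor}, and using that every irreducible representation of $G$ is finite dimensional, the representation associated to $S$ decomposes into finite dimensional irreducible pieces. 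Integrating $F$ against the matrix coefficients of one such finite dimensional subrepresentation of $S$ yields, for the $T$-side, a finite family of functions on $X$ spanning a finite dimensional space that is $G$-invariant under the Koopman action of $T$. The essential point is that this construction mirrors the flow of the proof of Theorem \ref{multipThm}: one transports invariance of $F$ across the product, and the finite dimensionality of the $S$-subrepresentation (guaranteed by the Moore property) forces the corresponding $T$-invariant space to be finite dimensional.

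The main obstacle I anticipate is twofold. First, one must ensure the resulting finite dimensional $G$-invariant subspace lands in $\Lp\infty{\sys}$ rather than only in $\Lp2{\sys}$; this should follow from choosing $F$ bounded and verifying that the functions obtained by integrating the bounded $F$ against bounded matrix coefficients remain essentially bounded, but it requires care because the Koopman operators for a non-singular action carry Radon--Nikodym factors and do not obviously preserve $\Lp\infty{\sys}$. I would address this by working directly with the composition $f\circ T_g$ rather than the full Koopman operator $U_g$ when tracking boundedness, since $f\circ T_g$ preserves the $\Lp\infty$ norm even when $U_g$ does not. Second, and more delicate, I must guarantee the produced subspace is \emph{non-trivial}: I would argue that triviality would force $F$ to be measurable with respect to the $X$-coordinate alone, or to reduce to a constant after conditioning, contradicting the assumption that $F$ is a non-constant $T\times S$-invariant function; this is the analogue of the non-triviality arguments appearing at the ends of both implications in the proof of Theorem \ref{multipThm}, and I expect to reuse the decomposition $\phi(x)(y)=F(x,y)-\int_Y F(x,y)\,d\nu(y)$ introduced there to separate the constant part cleanly.
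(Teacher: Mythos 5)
Your proposal diverges from the paper in both directions, and each divergence contains a genuine gap. In the easy direction, you treat the finite dimensional $G$-invariant subspace $V\subseteq\Lp\infty{\sys}$ as if it were invariant under the $L_2$ Koopman representation $U_gf=\sqrt{dm_g/dm}\cdot f\circ T_g$, so that the orthogonality argument of Proposition \ref{weak&measureProp} applies. But invariance of an $\Lp\infty{\sys}$ subspace here means invariance under the composition operators $f\mapsto f\circ T_g$, and since the Radon--Nikodym factor need not multiply $V$ into itself, these two notions of invariance do not coincide for a merely non-singular action; the matrices you get from composition invariance are not orthogonal, and the invariant measure $d\mu=\phi\,dm$ does not materialize by that route. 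More seriously, even granting an absolutely continuous invariant probability measure, its existence does not obstruct weak mixing --- every probability preserving weakly mixing action possesses one --- so your closing sentence of that direction is false as stated, and you never actually produce an ergodic probability preserving $S$ with $T\times S$ non-ergodic. The paper instead passes from $V$ to a Banach--Kronecker factor (Proposition \ref{existenceBanachKronFactorProp}), equips it with the invariant ergodic measure $P$ coming from the compact group compactification (Lemma \ref{invErgMeasureLem}), and defeats ergodicity of the product explicitly via the near-diagonal invariant sets $O_\eps$ of intermediate $m\times P$ measure (Theorem \ref{banachKronThm}); as Remark \ref{noNeedFiniteRmk} notes, this direction uses no Moore hypothesis at all.

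In the hard direction, your plan hinges on the Koopman representation of the witness $S$ ``decomposing into finite dimensional irreducible pieces'' and on integrating $F$ against matrix coefficients of a finite dimensional \emph{subrepresentation} of $S$. For Moore groups this fails: $\Z$ and $\R$ are Moore groups, yet they carry weakly mixing probability preserving actions whose reduced Koopman representations contain no finite dimensional subrepresentations whatsoever (Theorem \ref{bergelsonRosenblattThm}(3)), and the witness $S$ to the failure of weak mixing of a non-singular $T$ can itself be weakly mixing --- for abelian $G$, Theorem \ref{multipThm} only requires $\sigma_0(e(T))>0$, which is compatible with $\sigma_0$ non-atomic since $e(T)$ can be uncountable for actions with no equivalent invariant measure. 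So in the essential cases your construction has nothing to integrate against. What rescues the paper's proof (Theorem \ref{finiteSubThm}) is the direct \emph{integral}, not a direct sum: by Theorem \ref{unitaryRepThm}, $\Lp2{\nsys}_0\simeq\directint\H(t)\,d\lambda(t)$ with $\lambda$-almost every fiber irreducible, hence finite dimensional by the Moore property, even though no individual fiber is a subrepresentation; the invariance relation for $\phi$ is pushed to the fibers via Lemma \ref{uniqLem} and Fubini, a fiber $t_0$ on which $\tilde\phi(\cdot)(t_0)$ is non-constant is extracted by an averaging argument played against ergodicity of $S$, and membership in $\Lp\infty{\sys}$ follows because ergodicity of $T$ forces $\norm{\tilde\phi(\cdot)(t_0)}{}$ to be almost everywhere constant --- close in spirit to your ``track $f\circ T_g$ instead of $U_g$'' fix, but the actual mechanism is invariance of the fiber norm under the unitary fiber operators, not boundedness of matrix coefficients.
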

This theorem gives necessary and sufficient conditions for weak mixing.
\subsection{A non-trivial invariant $\Lp\infty{\sys}$ subspace}
In this subsection, we will show that every action of a locally compact Polish Moore group that has a non-trivial finite dimensional action-invariant $\Lp\infty{\sys}$ subspace, is not weakly mixing.
\begin{prop}\label{existenceBanachKronFactorProp}
Let $T$ be a non-singular ergodic $G$-action defined on $\sys$, a standard $\sigma$-finite measure space. There exists a $G$-invariant, non-trivial, finite dimensional subspace of $\Lp\infty{\sys}$ if and only if there exists a non-trivial Banach-Kronecker factor.
\end{prop}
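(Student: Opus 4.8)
The plan is to prove both implications by passing between a finite-dimensional invariant subspace $V\subseteq\Lp\infty{\sys}$ and the linear factor obtained from a basis of $V$; the coordinate functions of the factor are exactly such a basis, and the linearity of a Banach-Kronecker action is what matches "invariant subspace" with "linear factor".

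For the direction that a Banach-Kronecker factor yields an invariant subspace, suppose we are given an equivariant measurable map $\Psi:X\rightarrow Y$ onto a Banach-Kronecker system $(Y,\pi,G)$ with $Y\subset\R^d$ and with $\Psi_\ast m$ non-singular. I would pull back the $d$ coordinate functions $y_1,\dots,y_d$ of $\R^d$; since $Y$ is bounded these are bounded, so $f_i:=y_i\circ\Psi\in\Lp\infty{\sys}$. Because a Banach-Kronecker action is linear, each $T_g$ acts on $\R^d$ by a linear map $A_g$, whence $y_i\circ T_g=\sumit j 1 d (A_g)_{ij}y_j$; composing with the intertwining map $\Psi$ gives $f_i\circ T_g=\sumit j 1 d (A_g)_{ij}f_j$, so $span\bset{f_1,\dots,f_d}$ is a finite-dimensional $G$-invariant subspace of $\Lp\infty{\sys}$. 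Non-triviality of the factor (that is, $Y$ is not a single point) forces some $f_i$ to be non-constant, so the subspace is non-trivial.

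For the converse, given a non-trivial finite-dimensional $G$-invariant $V\subseteq\Lp\infty{\sys}$, I would first normalise $m$ to a probability measure (possible since $T$ is non-singular and the space is $\sigma$-finite) and, after replacing $V$ by $V+\overline V$ (still $G$-invariant, since composition with the point map $T_g$ commutes with conjugation), assume $V$ is conjugation-invariant and choose a real basis $f_1,\dots,f_d$. Define $\Phi:X\rightarrow\R^d$ by $\Phi(x)=(f_1(x),\dots,f_d(x))$ and set $\mu:=\Phi_\ast m$. Writing $A_g$ for the matrix of $f\mapsto f\circ T_g$ on $V$, one has $\Phi\circ T_g=A_g\Phi$, so $\Phi$ is an equivariant factor map and $\mu$ is a non-singular, ergodic measure on $\R^d$ (ergodicity pushes forward through a factor map); the map $g\mapsto A_g$ is continuous by continuity of the Koopman action on the finite-dimensional $V$, so $\pi(g,y):=A_gy$ is a genuine topological action. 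I would then take $Y:=supp(\mu)$, a closed, bounded (as the $f_i$ lie in $\Lp\infty{\sys}$), $G$-invariant (by quasi-invariance of $\mu$) subset of $\R^d$.

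Two properties of $(Y,\pi,G)$ remain, and they are where the real work lies. Equicontinuity rests on the observation that $f\mapsto f\circ T_g$ is an $\Lp\infty{\sys}$-isometry, because $T_g$ preserves the measure class and hence the essential supremum; thus each $A_g$ is a linear isometry of $(V,\norm{\cdot}{\infty})$, and since the isometry group of a finite-dimensional norm is compact we obtain $\sup_{g}\norm{A_g}{op}<\infty$, which makes the linear maps $\bset{A_g}$ equicontinuous on the bounded set $Y$. The hard part will be minimality of $Y$: I would invoke Lemma \ref{existenceOfInvMetricLem} to equip the compact, equicontinuous system $Y$ with an invariant metric $D$, select a minimal subset $M\subseteq Y$, and consider $\rho(x)=D(x,M)$, which is continuous and $G$-invariant and therefore $\mu$-almost everywhere constant by ergodicity. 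Since $M\subseteq supp(\mu)$ this constant must vanish, which forces $\mu(Y\setminus M)=0$ and hence $Y=M$, minimal. Together with the non-triviality of $V$ (which keeps $Y$ from being a point), this exhibits $(Y,\pi,G)$ equipped with $\mu$ as the required non-trivial Banach-Kronecker factor.
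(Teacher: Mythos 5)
Your proposal is correct, and while its overall skeleton matches the paper's (pull back coordinate functions for one direction; push the measure forward along $\Phi(x)=(f_1(x),\dots,f_d(x))$, take the closed support, and act by the matrices $A_g$ for the other), the two genuinely hard sub-steps are handled by different arguments. For equicontinuity, the paper bounds $\sup_g\norm{S_g}{op}$ by compactness of $Y$ and continuity of the norm (an argument that implicitly needs the image to span enough of $\R^d$), whereas you observe that $f\mapsto f\circ T_g$ is an $\Lp\infty\sys$-isometry because $T_g$ preserves the measure class, so the $A_g$ lie in the isometry group of a finite-dimensional norm, which is compact; this is more direct and sidesteps the spanning issue. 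For minimality, the paper shows the single support point $y_0$ is transitive, by passing to a countable dense subgroup $G_0$ (needed to keep the orbit-union of balls measurable, with ergodicity of $G_0$ supplied by proposition \ref{denseSubgroupErgProp}) and then invoking that a transitive equicontinuous system with invariant metric is minimal; you instead take a minimal subset $M\subseteq Y$ (Zorn), use the invariant metric $D$ of lemma \ref{existenceOfInvMetricLem} to form the continuous $G$-invariant function $\rho=D(\cdot,M)$, and conclude from ergodicity that $\rho$ is $\mu$-a.e.\ constant; since $\{\rho\neq c\}$ is open and $\mu$-null it misses $supp(\mu)=Y$, and $\rho\equiv 0$ on $M$ forces $c=0$ and $Y=M$. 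Your route buys a cleaner minimality proof that avoids the countable-subgroup measurability device entirely (ergodicity enters only once, through a continuous invariant function), at the mild cost of invoking existence of minimal subsets; the paper's route is more elementary in that respect but fussier about measurability. Your extra care with $V+\overline V$ to obtain a real basis, and with quasi-invariance of $\mu$ to see $A_g(supp\,\mu)=supp\,\mu$, addresses points the paper glosses over; like the paper, you treat continuity of $g\mapsto A_g$ briefly, but your appeal to continuity of the Koopman action restricted to the finite-dimensional $V$ is no weaker than the original.
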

\begin{proof}
If there exists a non-trivial Banach-Kronecker factor, then there exists a bounded closed set $Y\subseteq \R^n$, a measure $\nu$, a linear action $S$, and a factor map $\pi:X\rightarrow Y$ such that $\nu=m\circ\pi\inv$ and $\pi\circ T=S\circ\pi$. Note that $\pi=(\pi_1,\dots,\pi_n)$, and each of these functions $\pi_j:X\rightarrow \R$ is bounded, since $Y$ is bounded. In addition, 
$$
\pi_j\bb{T_gx}=S_g\pi_j(x)=\sumit i 1 n a_i^j(g)\pi_i(x)
$$
Define $L=span\bset{\pi_j;~1\le j\le n}$, then it is a $G$-invariant non-trivial $\Lp\infty\sys$ subspace.\\
If there exists a finite dimensional non-trivial $G$-invariant subspace of $\Lp\infty{\sys}$, denote it by $V$ and let $S_\infty:=\{f\in \Lp\infty{\sys};\norm f \infty = 1\}$. Let $dim(V) = n$, and let $\bset{f_1,f_2,\dots,f_n}\subseteq S_\infty$ be a basis for $V$. Define the function $F:X\rightarrow \R^n$ by:
$$
F(x) = (f_1(x),f_2(x),\dots,f_n(x))
$$
First of all, $F$ is a measurable function, since for every $1\le j\le n$ the function $f_j$ is a measurable function, and $\C$, the Borel $\sigma$-algebra of $\R^n$, is generated by the product of the Borel $\sigma$-algebra on $\R$.\\
Define the measure $\nu(A)=m(F\inv(A))$. It is well a defined probability measure and $\nu(supp(m))=1$. Define the space $Y =  \overline{supp(\nu)}$.\\
Next, we will show that for every $g\in G$ there exists a matrix $S_g\in M_{n\times n}$ such that $F\circ T_g = S_g\cdot F$. Let there be $g\in G$, then for every $1\le j\le n$ we know that $f_j\circ T_g\in V$, therefore there are $\{a_i^j(g)\}_{i = 1}^n$ such that $\sum_{i = 1}^na_i^j(g)f_i = f_j\circ T_g$. Define the matrix $S_g$ by:
$$
S_g = \begin{pmatrix}
a_1^1(g) & a_2^1(g) & \dots & a_n^1(g)\\ a_1^2(g) & a_2^2(g) & \dots & a_n^2(g)\\ \vdots &\vdots &\dots &\vdots \\ a_1^n(g) & a_2^n(g) & \dots & a_n^n(g)
\end{pmatrix}
$$
Define the action of $G$ on $Y$ by $S_g:Y\rightarrow Y$ by $ S_g(y) = S_g\cdot y$ (since $S_g\in M_{n\times n}$ and $y\in\R^n$ it is well defined).\\
First lets verify that $S_g(y)\in Im(F)$ for $y\in Im(F)$: let $y\in Im(F)$ the there exists $x\in X$ such that $F(x) = y$. Then:
$$
S_g(y) = S_gF(x) =\begin{pmatrix}
a_1^1(g) & a_2^1(g) & \dots & a_n^1(g)\\ a_1^2(g) & a_2^2(g) & \dots & a_n^2(g)\\ \vdots &\vdots &\dots &\vdots \\ a_1^n(g) & a_2^n(g) & \dots & a_n^n(g)
\end{pmatrix}\cdot\begin{pmatrix}
f_1(x)\\f_2(x)\\ \vdots \\ f_n(x)
\end{pmatrix}=
$$
$$
=\begin{pmatrix}
f_1\circ T_g(x)\\f_2\circ T_g(x)\\ \vdots \\ f_n\circ T_g(x)
\end{pmatrix}=F(T_gx)\in Im(F)
$$
We conclude that for every $g\in G$, $S_g(Y)\subseteq Y$ and therefore the action is well defined. Next, by the definition of the measure $\nu$, the transformation $F$ is well defined on a set of full measure, and $F(X)\subseteq Y$ $\nu$-almost everywhere. Finally, $F:X\rightarrow Y$ is a factor map, since $F\circ T_g(x) = F(T_gx) =S_g\cdot F(x) = S_gF(x)$.\\
Moreover, $Y$ is compact- $Y\subseteq[-1,1]^n$, since for ever $1\le j\le n$ the function $f_j\in B_\infty$ and therefore $||f_j||_\infty = 1$, which means it is bounded. In addition, $Y$ is closed by definition, therefore it is compact and specifically Borel-measurable.\\
It is left to show that $\nsys$ is a Banach-Kronecker system. First $\R^n$ is a finite dimensional Banach space, $Y$ a closed bounded subspace, and $G$ is a locally compact Polish group by hypothesis. We will show it is equicontinuous and minimal.\\
{\bf Equicontinuity:} As in the proof of \ref{banachKronThm}, there exists a constant $C\in\R_+$ such that for every $g\in G$, $\norm{S_g}{}<C$. Let there be $\eps>0$, define $\delta=\frac\eps{2C}>0$. Then for every $x,y\in Y$ such that $d(x,y)<\delta$ ($d$ is the canonical metric defined on $\R^n$), for every $g\in G$, $d(S_gx,S_gy)\le\norm{S_g}{}d(x,y)<\eps$.\\
{\bf Minimality:} Denote by $D$ the action invariant metric created in lemma \ref{existenceOfInvMetricLem}. By the definition of $\nu$ there exists $y_0\in Y$ such that for every $\eps>0$, $\nu(B_D(y_0,\eps))>0$. We will show $y_0$ is a transitive point. Let $G_0\subset G$ be a dense countable subgroup. If $y_0$ is not a transitive point, then there exists $\delta>0$ and $y\in Y$ such that for every $g\in G_0$, the intersection $B_D\bb{y,\frac\delta2}\cap B_D\bb{S_g(y_0),\frac\delta 2}=\emptyset$ is empty. Define the set $B_0=\bunions g{G_0} B_D\bb{S_g(y_0),\frac\delta 2}$, it is measurable as a countable union of measurable sets, and for every $g\in G_0,\; gB_0=B_0$, by the definition of a subgroup. By proposition \ref{denseSubgroupErgProp} the action of $G_0$, denoted by $S^0$, is also ergodic. By ergodicity of $S^0$ this set is trivial, but $B_D(y_0,\delta)\subset B_0$ and therefore $\nu(B_0)=1$, which mean $\nu\bb{B_D\bb{y,\frac{\delta}2}}=0$, which is a contradiction to the definition of $Y$. Now, every equicontinuous action has an invariant metric (according to lemma \ref{existenceOfInvMetricLem}), and if a transitive action has an invariant metric, then it is minimal. We conclude the action of $S$ on $\nsys$ is minimal.\\
Finally, the system is not trivial, since the original sub-space was not a trivial one. We conclude this system has a non-trivial Banach-Kronecker factor.
\end{proof}
The following theorem is actually a corollary derived from the proposition above and theorem \ref{banachKronThm}.
\begin{thm}\label{nonMixProp}
Let $G$ be a locally compact Polish group, $\sys$ a standard space, and $T:G\rightarrow NST(X)$ an ergodic action. If there exists a $G$-invariant, non-trivial, finite dimensional subspace of $\Lp\infty{\sys}$, then $T$ is not weakly mixing.
\end{thm}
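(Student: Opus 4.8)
The plan is to derive the statement from the two preceding results together with the elementary principle that non-ergodicity passes from any factor to an extension of it. By hypothesis there is a non-trivial finite dimensional $G$-invariant subspace of $\Lp\infty\sys$, so Proposition \ref{existenceBanachKronFactorProp} provides a non-trivial Banach-Kronecker factor: a Banach-Kronecker system $(Y,S,G)$, a factor map $\pi:X\rightarrow Y$ intertwining the actions (so that $\pi\circ T_g=S_g\circ\pi$ for every $g\in G$), and the pushed-forward measure $\nu:=m\circ\pi\inv$, which is non-singular for $S$. Since $T$ is ergodic and $\pi$ is equivariant, $\nu$ is ergodic for $S$ as well.

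I would then apply Theorem \ref{banachKronThm} to the Banach-Kronecker system $(Y,S,G)$ equipped with the non-singular measure $\nu$: the conclusion is that the $S$-action on $(Y,\C,\nu)$ is not weakly mixing. By the definition of weak mixing this yields a probability preserving ergodic $G$-action $R$ on some standard probability space $(Z,\mathcal D,\mu)$ for which the product $S\times R$ on $(Y\times Z,\C\times\mathcal D,\nu\times\mu)$ fails to be ergodic; concretely, the proof of Theorem \ref{banachKronThm} takes $(Z,\mathcal D,\mu)=(Y,\C,P)$ with $P$ an invariant ergodic probability measure supplied by Lemma \ref{invErgMeasureLem}. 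Fix a non-trivial $(S\times R)$-invariant set $B\subseteq Y\times Z$, so that $0<(\nu\times\mu)(B)<1$.

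The final step is to lift this obstruction to $X$. Consider the map $\Pi:=\pi\times\mathrm{id}_Z:X\times Z\rightarrow Y\times Z$, $(x,z)\mapsto(\pi(x),z)$. Equivariance of $\pi$ gives $\Pi\circ(T_g\times R_g)=(S_g\times R_g)\circ\Pi$ for every $g\in G$, and the definition of $\nu$ gives the pushforward identity $(m\times\mu)\circ\Pi\inv=\nu\times\mu$ (checked on product sets and extended by a generating $\pi$-system). Put $A:=\Pi\inv(B)\subseteq X\times Z$. The intertwining identity gives $(T_g\times R_g)\inv A=\Pi\inv\bb{(S_g\times R_g)\inv B}$, whence $A\symdif(T_g\times R_g)\inv A=\Pi\inv\bb{B\symdif(S_g\times R_g)\inv B}$ is $(m\times\mu)$-null for every $g$; since the action is by invertible maps this is exactly $(T\times R)$-invariance in the sense of the paper. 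The pushforward identity moreover gives $(m\times\mu)(A)=(\nu\times\mu)(B)\in(0,1)$, so $A$ is non-trivial. Hence $T\times R$ is not ergodic, and since $R$ is probability preserving and ergodic, $T$ is not weakly mixing.

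Since the statement is flagged as a corollary, the work is essentially bookkeeping; the one place to be careful is the transfer of invariance and of measure in the last paragraph. This is the standard fact that a factor of an ergodic system is ergodic, and it remains valid in the non-singular category precisely because $\Pi$ is equivariant and its pushforward of $m\times\mu$ equals $\nu\times\mu$, so both null sets and invariant sets pull back correctly along $\Pi$ without any appeal to measure preservation of $T$.
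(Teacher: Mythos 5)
Your proposal is correct and follows essentially the same route as the paper: Proposition \ref{existenceBanachKronFactorProp} yields a non-trivial Banach--Kronecker factor, Theorem \ref{banachKronThm} shows that factor is not weakly mixing, and non-weak-mixing lifts from the factor to the original action. The only difference is that the paper asserts this last lifting step in one sentence, whereas you verify it explicitly via $\Pi=\pi\times\mathrm{id}_Z$ (pushforward of measure, equivariance, and pullback of the non-trivial invariant set), which fills in exactly the bookkeeping the paper leaves implicit.
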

\begin{proof}
If such a subspace exists by proposition \ref{existenceBanachKronFactorProp}, there exists a non-trivial Banach-Kronecker factor. Now, Banach-Kronecker systems are not weakly mixing according to theorem \ref{banachKronThm}. We conclude our system has a non-trivial factor which is not weakly mixing and therefore this system is not weak mixing.
\end{proof}
\begin{rmk}\label{noNeedFiniteRmk}
Note that in this proof, we did not use the fact that the group is a Moore group. In fact, every non singular action of a locally compact Polish group that has a non-trivial finite dimensional invariant $\Lp\infty{\sys}$ subspace, is not weakly mixing.
\end{rmk}
\subsection{A non weakly mixing action}
In this subsection, we will show that every non-weakly mixing action of a locally compact Polish Moore group has a non-trivial finite dimensional action-invariant $\Lp\infty{\sys}$ subspace.
\subsubsection {Definitions \& Preliminaries: The direct integral}
-\\The following definitions and theorems are quoted from \cite{Dixmier1977}.
\begin{defn}\label{measurableFieldDef}
Let $Z$ be a Borel space, $\mu$ a positive measure, and $\bset{\H(\zeta)}_{\zeta\in Z}$ be a family of Hilbert spaces. A {\bf vector field} is an assignment to each point, of an element of the associated Hilbert space.
\end{defn}
\begin{defn}\label{totalSequenceDef}
Let $\H$ be a Hilbert space. A {\bf total sequence} $\bset{x_n}\subseteq\H$ is a sequence such that $\H$ is the closed linear span of $\bset{x_n}$.
\end{defn}
\begin{defn}\label{measurableFieldHilbertDef}
Let $(Z,\mu)$ be a standard space. A {\bf$\mu$- measurable field of Hilbert spaces over $Z$} is a pair $(\bset{H(\zeta)}_{\zeta\in Z},\Gamma)$, where $\bset{H(\zeta)}_{\zeta\in Z}$ is a family of Hilbert spaces indexed by $Z$, and $\Gamma$ is a set of vector fields satisfying the following conditions:
\begin{enumerate}
\item $\Gamma$ is a vector subspace of $\prods\zeta Z\H(\zeta)$.
\item There exists a sequence $\gamma_1,\gamma_2\dots$ of elements in $\Gamma$, such that for every $\zeta\in Z$ the sequence $\bset{\gamma_n(\zeta)}$ form a total sequence in $\H(\zeta)$.
\item For every $\gamma\in\Gamma$ the function $\zeta\mapsto \norm{\gamma(\zeta)}{}^2$ is $\mu$-measurable.
\item Let $X$ be a vector field. Then if for every $\gamma\in\Gamma$, the function $\binr{X(\zeta)}{\gamma(\zeta)}$ is $\mu$-measurable, then $X\in\Gamma$.
\end{enumerate}
Under these conditions, the elements of $\Gamma$ are called {\bf the measurable vector fields of $(\bset{H(\zeta)}_{\zeta\in Z},\Gamma)$}. If $\gamma_1,\gamma_2\in\Gamma$, then the function $\zeta\mapsto \binr{\gamma_1(\zeta)}{\gamma_2(\zeta)}$ is measurable.
\end{defn}
\begin{defn}\label{measurableFieldOperatorDef}
Let $Z$ be a Borel space, $\mu$ a positive measure, and let $(\bset{H(\zeta)}_{\zeta\in Z},\Gamma)$ be a $\mu$- measurable field of Hilbert spaces over $Z$. An {\bf operator field} is an assignment to each point $\zeta\in Z$, of an element $T(\zeta)\in\mathfrak L(\H(\zeta))$, $\mathfrak L(\H(\zeta))$ the collection of all the linear operators of $\H(\zeta)$. We say that $\zeta\mapsto T(\zeta)$ is a {\bf $\mu$-measurable field of operators} if, for every $\gamma\in\Gamma$, the field $\zeta\mapsto T(\zeta)\gamma(\zeta)$ is measurable. If this is so, the function $\zeta\mapsto\norm{T(\zeta)}{}$ is measurable. Suppose further that this function is essentially bounded, in which case the field is said to be essentially bounded, and $T=\directint T(\zeta)d\mu(\zeta)$ is well defined. The operators of the form $\directint T(\zeta)d\mu(\zeta)$ on $\H$ are said to be {\bf diagonalisable}.
\end{defn}
\begin{defn}\label{measurableFieldRepresentationDef}
Let $Z$ be a Borel space, $\mu$ a positive measure, and $(\bset{H(\zeta)}_{\zeta\in Z},\Gamma)$ a $\mu$- measurable field of Hilbert spaces over $Z$. For each $\zeta\in Z$, let $\pi(\zeta)$ be a representation of a group $G$ in $\H(\zeta)$. Then we call $\zeta\mapsto\pi(\zeta)$ a {\bf field of representations} of $G$. This field is called a {\bf measurable field of representations} if for every $g\in G$ the field of operators $\zeta\mapsto\pi(\zeta)(g)$ is measurable.
\end{defn}
\begin{defn}\label{directIntDef}
Given a measurable field of representations, one could construct for every $g\in G$ the continuous operator $\pi(g)=\directint\pi(\zeta)(g)d\mu(\zeta)$ on the Hilbert space $\H = \directint \H(\zeta)d\mu(\zeta)$. $\pi$ is said to be {\bf the direct integral} of $\bset{\pi(\zeta)}$, and we write $\pi=\directint\pi(\zeta)d\mu(\zeta)$.
\end{defn}
\begin{defn}\label{starAlgDef}
A {\bf *- algebra} is an algebra which is closed with respect to conjugation.
\end{defn}
Note that in the case of operators acting on $\Lp2\sys$ by measure preserving composition (of the form $Uf=f\circ T$, where $T$ is an invertible measure preserving transformation), the conjugate operator is the inverse operator.
\begin{defn}\label{vonNeumannAlgDef}
A {\bf Von-Neumann algebra} (also called {\bf $W^\star$-algebra}) is a $\star$-algebra of bounded operators on a Hilbert space that is closed in the weak operator topology and contains the identity operator.
\end{defn}
\begin{thm}[Mautner, Dixmier]\label{decompositionThm}
Let $Z$ be a Borel space, $\mu$ a positive measure on $Z$, $(\bset{H(\zeta)}_{\zeta\in Z},\Gamma)$ a $\mu$-measurable field of Hilbert spaces over $Z$, $\zeta\mapsto\pi(\zeta)$ a measurable field of representations of $G$ in $\H(\zeta)$. Denote by
$$
\H = \underset Z\directint\H(\zeta)d\mu(\zeta)\; \; \; ;\; \; \; \pi=\underset Z\directint\pi(\zeta)d\mu(\zeta)
$$
and by $\mathfrak Z$ the algebra of diagonalisable operators. Then the following are equivalent:
\begin{enumerate}
\item $\mathfrak Z$ is a maximal commutative Von-Neumann sub-algebra of $\pi(G)'$, where $\pi(G)':=\bset{B;~\forall g\in G.\;B\pi(g)=\pi(g)B, B\text{ bounded }}$.
\item $\pi(\zeta)$ is irreducible for $\mu$-almost every $\zeta\in Z$.
\end{enumerate}
\end{thm}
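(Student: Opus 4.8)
The plan is to reduce the statement to the standard duality between diagonalisable and decomposable operators together with a fibrewise application of Schur's lemma, isolating a measurable selection argument as the one genuinely nontrivial ingredient. Throughout I write $\mathfrak Z$ for the algebra of diagonalisable operators, i.e. those whose fibres are scalar, $\directint f(\zeta)\mathrm{Id}_{\H(\zeta)}d\mu(\zeta)$ with $f\in\Lp\infty{(Z,\mu)}$, and I record two facts that hold with no hypothesis on the $\pi(\zeta)$: first, $\mathfrak Z$ is a commutative von Neumann algebra; second, since each $\pi(g)=\directint\pi(\zeta)(g)d\mu(\zeta)$ is decomposable and every diagonalisable operator commutes with every decomposable one, $\mathfrak Z\subseteq\pi(G)'$. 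Thus the only content of the theorem is \emph{when} this inclusion is maximal among the abelian subalgebras of $\pi(G)'$.

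First I would replace ``maximal abelian in $\pi(G)'$'' by a relative commutant equation. For any abelian von Neumann subalgebra $A$ of a von Neumann algebra $\mathcal M$, $A$ is maximal abelian in $\mathcal M$ if and only if $A'\cap\mathcal M=A$: if $x=x\dual\in A'\cap\mathcal M$ then the von Neumann algebra generated by $A$ and $x$ is abelian and lies in $\mathcal M$, so maximality forces $x\in A$, and the converse is immediate. Applying this with $A=\mathfrak Z$ and $\mathcal M=\pi(G)'$, the theorem becomes the assertion that $\mathfrak Z'\cap\pi(G)'=\mathfrak Z$ holds exactly when $\pi(\zeta)$ is irreducible almost everywhere.

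The next step invokes the fundamental theorem of the theory, which I would cite from \cite{Dixmier1977}, that $\mathfrak Z'$ is precisely the algebra of decomposable operators $\directint T(\zeta)d\mu(\zeta)$. An element $B=\directint T(\zeta)d\mu(\zeta)$ of $\mathfrak Z'$ then lies in $\pi(G)'$ if and only if $T(\zeta)\pi(\zeta)(g)=\pi(\zeta)(g)T(\zeta)$ for almost every $\zeta$; since $G$ is a locally compact Polish, hence second countable, group and $g\mapsto\pi(g)$ is continuous, it suffices to test this on a fixed countable dense subgroup $G_0\subseteq G$, so a single conull set works simultaneously for all $g$. Consequently $\mathfrak Z'\cap\pi(G)'$ consists exactly of the decomposable operators whose fibres satisfy $T(\zeta)\in\pi(\zeta)(G)'$ almost everywhere. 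If $\pi(\zeta)$ is irreducible for almost every $\zeta$, then Schur's lemma gives $\pi(\zeta)(G)'=\comp\,\mathrm{Id}_{\H(\zeta)}$ a.e., so every such $B$ has scalar fibres and therefore $B\in\mathfrak Z$; this proves $(2)\Rightarrow(1)$.

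The harder implication $(1)\Rightarrow(2)$ I would prove by contraposition, and this is where the main obstacle lies. Suppose the set $W=\bset{\zeta;~\pi(\zeta)\text{ is reducible}}$ has positive measure, equivalently $\pi(\zeta)(G)'\neq\comp\,\mathrm{Id}$ on $W$. I want to manufacture a single decomposable operator $B=\directint T(\zeta)d\mu(\zeta)$ with $T(\zeta)\in\pi(\zeta)(G)'$ for a.e. $\zeta$ but $B\notin\mathfrak Z$, which by the previous paragraph exhibits an element of $\mathfrak Z'\cap\pi(G)'$ outside $\mathfrak Z$ and so contradicts maximality. The difficulty is that choosing, for each $\zeta\in W$, a non-scalar operator in $\pi(\zeta)(G)'$ must be done \emph{measurably} in $\zeta$, so that the resulting field is a genuine essentially bounded measurable operator field; this is precisely a measurable selection problem. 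One shows that the relative commutant $\mathfrak Z'\cap\pi(G)'$ is itself a decomposable von Neumann algebra equal to $\directint\pi(\zeta)(G)'d\mu(\zeta)$, and that reducibility of the fibres on a positive-measure set forces a non-diagonalisable element. The key technical input is the Borel selection machinery for measurable fields of Hilbert spaces and operators developed in \cite{Dixmier1977}; granting it, one selects a measurable field of, say, a spectral projection of a self-adjoint non-scalar element of each $\pi(\zeta)(G)'$ over $W$ and extends by $0$ off $W$, obtaining the required $B$. I expect the verification that this selection can actually be performed, rather than the surrounding algebra, to be the crux of the argument.
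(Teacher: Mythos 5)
The paper never proves this theorem: it states it with attribution to Mautner and Dixmier and refers the reader to \cite{Dixmier1977}, so there is no internal proof to compare your argument against. What you have written is, in outline, the standard argument from Dixmier's book, and its skeleton is correct: maximal abelianness of $\mathfrak Z$ in $\pi(G)'$ is equivalent to the relative commutant equation $\mathfrak Z'\cap\pi(G)'=\mathfrak Z$ (your self-adjoint-element argument is fine, since a von Neumann algebra is spanned by its self-adjoint elements); von Neumann's theorem identifies $\mathfrak Z'$ with the decomposable operators; and Schur's lemma applied fibrewise gives $(2)\Rightarrow(1)$. You also correctly locate the whole difficulty of $(1)\Rightarrow(2)$ in the identification $\mathfrak Z'\cap\pi(G)' = \directint\pi(\zeta)(G)'\,d\mu(\zeta)$, which you cite rather than prove; that commutation theorem for direct integrals is exactly where the mathematical content lives, so your proposal is a faithful reduction to a cited ingredient rather than a self-contained proof --- which, to be fair, mirrors the paper's own practice.

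Two refinements are worth recording. First, your passage from the countable dense subgroup $G_0$ to all of $G$ requires strong continuity of the \emph{fibre} representations $g\mapsto\pi(\zeta)(g)$, not of $\pi$ itself; the paper's definition of a measurable field of representations demands only measurability, so you should either assume (as Dixmier in effect does) that each $\pi(\zeta)$ is a continuous unitary representation, or argue via the von Neumann algebras generated: $T(\zeta)\in\pi(\zeta)(G_0)'$ and $\pi(\zeta)(G_0)''=\pi(\zeta)(G)''$ by density and continuity. Second, once you know that $\mathfrak Z'\cap\pi(G)'$ is a decomposable von Neumann algebra with fibres $\pi(\zeta)(G)'$, no genuine measurable selection theorem is needed for the contrapositive of $(1)\Rightarrow(2)$: such an algebra is generated, together with $\mathfrak Z$, by a countable sequence of measurable operator fields $T_n(\zeta)$, and if every $T_n(\zeta)$ were scalar almost everywhere then $\pi(\zeta)(G)'$ would be trivial almost everywhere. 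Contrapositively, reducibility on a set $W$ of positive measure already produces some $T_n$ that is non-scalar on a positive-measure subset of $W$, and this $T_n$ (or a spectral projection of its self-adjoint part, which is a measurable field by fibrewise functional calculus) is the desired element of $\mathfrak Z'\cap\pi(G)'$ outside $\mathfrak Z$. With these two points patched, your outline is a correct proof modulo the cited commutation theorem.
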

The theorem above has a rich history. It was originally stated by Mautner, and later proved by many other mathematicians. References can be found in \cite{Dixmier1977}.
\begin{thm}\label{unitaryRepThm}
Let $\H$ be a separable Hilbert space, $\pi$ a representation of $G$ in $\H$, and $\mathcal A$ a maximal commutative Von-Neumann sub-algebra of $\pi(G)'$. Then there exist a standard Borel space $Z$, a bounded positive measure $\mu$ on $Z$, a $\mu$-measurable field $(\bset{H(\zeta)}_{\zeta\in Z},\Gamma)$ of Hilbert spaces over $Z$, a measurable field $\zeta\mapsto\pi(\zeta)$ of irreducible representations of $\mathcal A$ in the $\H(\zeta)$, and an isomorphism of $\H$ onto $\underset Z\directint\H(\zeta)\;d\mu(\zeta)$ (an isomorphism of Hilbert spaces) which transforms $\mathcal A$ into the algebra of diagonalisable operators and $\pi$ into $\underset Z\directint\pi(\zeta)\;d\mu(\zeta)$.
\end{thm}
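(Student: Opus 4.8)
The plan is to deduce this from the structure theory of commutative von Neumann algebras together with the Mautner--Dixmier theorem (Theorem \ref{decompositionThm}), which is already at our disposal. The guiding idea is that a commutative von Neumann algebra acting on a separable Hilbert space is, up to unitary equivalence, an algebra of multiplication operators by $\Lp\infty{(Z,\mu)}$-functions, and that this concrete realization is precisely what manufactures a direct integral decomposition in which $\mathcal A$ becomes the diagonalisable operators. Once $\H$ has been so decomposed, the hypothesis $\mathcal A\subseteq\pi(G)'$ forces each operator $\pi(g)$ to be decomposable; assembling these fibrewise operators into a single measurable field $\zeta\mapsto\pi(\zeta)$ gives a field of representations, and the irreducibility of the fibres is then read off directly from Theorem \ref{decompositionThm}.

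First I would invoke the spectral multiplicity theory for abelian von Neumann algebras (the decomposition of a separable Hilbert space relative to a commutative algebra of operators, as developed in \cite{Dixmier1977}): since $\H$ is separable, there exist a standard Borel space $Z$, a bounded positive measure $\mu$ on $Z$, and a $\mu$-measurable field $(\bset{\H(\zeta)}_{\zeta\in Z},\Gamma)$ together with a Hilbert space isomorphism $\H\cong\directint_Z\H(\zeta)\,d\mu(\zeta)$ carrying $\mathcal A$ onto the algebra $\mathfrak Z$ of diagonalisable operators. Separability of $\H$ is exactly what lets us take $Z$ standard and $\mu$ finite, and this is the one genuinely structural input, as opposed to the purely formal machinery set up in the preceding definitions.

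Next I would decompose $\pi$. Because $\mathcal A\subseteq\pi(G)'$, every operator $\pi(g)$ commutes with all of $\mathfrak Z$; and the operators commuting with every diagonalisable operator are precisely the decomposable ones, so for each $g\in G$ there is a $\mu$-measurable field $\zeta\mapsto\pi(\zeta)(g)$ with $\pi(g)=\directint_Z\pi(\zeta)(g)\,d\mu(\zeta)$. The delicate point is to upgrade this family of per-$g$ fields into one \emph{measurable field of representations}, i.e. to arrange the fibre operators so that $\pi(\zeta)(gh)=\pi(\zeta)(g)\pi(\zeta)(h)$ and $\pi(\zeta)(e)=Id$ hold for $\mu$-almost every $\zeta$ simultaneously in $g$ and $h$. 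I would handle this by fixing a countable dense subgroup $G_0\subseteq G$ (available since $G$ is Polish), establishing the homomorphism and unit relations on a single full-measure set for all pairs drawn from the countable set $G_0$, and then extending to all of $G$ using the continuity of $g\mapsto\pi(g)$ together with the measurability of the field in $g$. This bookkeeping with null sets is where I expect the real labour to lie, since the diagonalisation and the irreducibility are essentially quotable from \cite{Dixmier1977}.

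Finally, irreducibility comes for free. Under the identification above $\mathfrak Z$ coincides with $\mathcal A$, which by hypothesis is a maximal commutative von Neumann subalgebra of $\pi(G)'$; thus condition (1) of the Mautner--Dixmier theorem (Theorem \ref{decompositionThm}) holds, and its equivalent condition (2) yields that $\pi(\zeta)$ is irreducible for $\mu$-almost every $\zeta\in Z$. Collecting $Z$, $\mu$, the field $(\bset{\H(\zeta)}_{\zeta\in Z},\Gamma)$, the field of representations $\zeta\mapsto\pi(\zeta)$, and the isomorphism $\H\cong\directint_Z\H(\zeta)\,d\mu(\zeta)$ furnishes all the required data and completes the proof.
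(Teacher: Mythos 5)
The paper contains no proof of this theorem: it sits in the preliminaries on direct integrals, whose opening line states that the definitions and theorems there are quoted from \cite{Dixmier1977}, and Theorem \ref{unitaryRepThm} is cited as a black box (it is Dixmier's decomposition theorem). So there is no in-paper argument to compare yours against; what you have reconstructed is, in outline, precisely the canonical proof from the cited reference, and it is sound. Your three steps are the right ones: (i) any commutative von Neumann algebra on a separable Hilbert space is spatially isomorphic to the algebra $\mathfrak Z$ of diagonalisable operators of a direct integral over a standard finite measure space — separability is indeed the essential input; (ii) since the commutant of $\mathfrak Z$ is exactly the algebra of decomposable operators, $\mathcal A\subseteq\pi(G)'$ forces each $\pi(g)$ to be decomposable, and the null-set bookkeeping you describe over a countable dense subgroup $G_0$ is the genuinely laborious part — one refinement worth recording is that the passage from $G_0$ back to all of $G$ in each fibre is cleanest via von Neumann's theorem that a weakly measurable unitary representation of a second countable locally compact group on a separable Hilbert space is automatically strongly continuous, so it suffices to verify the group law fibrewise on $G_0$ together with measurability in $g$, rather than chasing limits of fields along sequences $g_n\to g$; (iii) maximality of $\mathcal A$ transports under the isomorphism to maximality of $\mathfrak Z$ in the commutant, so condition (1) of Theorem \ref{decompositionThm} holds and condition (2) delivers irreducibility of $\pi(\zeta)$ for $\mu$-almost every $\zeta$, exactly as you say. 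One small caveat about the statement itself: as printed it speaks of ``irreducible representations of $\mathcal A$ in the $\H(\zeta)$,'' which should read representations of $G$ (with $\mathcal A$ carried onto the diagonalisable algebra); your proof correctly supplies the latter, which is also what the application in Theorem \ref{finiteSubThm} requires.
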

\begin{rmk}\label{uniquenesRmk}
This decomposition is not always unique. It is known that if the group is postliminal (see definition bellow), then the decomposition is unique, but there are examples where it is not unique. For more details see \cite{Dixmier1977}.
\end{rmk}
\begin{defn}\label{postliminalDef}
A group is called {\bf postliminal} \cite{Dixmier1977} or {\bf GCR} or {\bf type I} \cite{Sakai1967} if for every irreducible representation $\pi$ there exists $g\in G$ such that $\pi(g)$ is compact.
\end{defn}
The definition in \cite{Dixmier1977} is a different one, but in \cite{Sakai1967} they proved the definition above and the definition in \cite{Dixmier1977} are equivalent.
\begin{rmk}
Every Moore group is a type I group.
\end{rmk}
\begin{lem}\label{uniqLem}
Let $f\in\H$ such that $f=0$. Let $(\bset{H(\zeta)}_{\zeta\in Z},\Gamma)$ be a $\mu$- measurable field of Hilbert spaces over $Z$, $\H=\underset Z\directint\H(t)d\lambda(t)$ be a direct integral, $\bar f:Z\rightarrow (\bset{H(\zeta)}_{\zeta\in Z},\Gamma)$ such that $f=\underset Z\directint \bar f(t)\;d\lambda(t)$. Then for $\lambda$- almost every $t\in Z$, $\bar f(t)=0$.
\end{lem}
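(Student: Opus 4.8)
The plan is to reduce the statement to the elementary measure-theoretic fact that a non-negative measurable function whose integral vanishes is zero almost everywhere. The entire content sits in the definition of the Hilbert-space structure on a direct integral. First I would recall that for a measurable field $(\bset{\H(\zeta)}_{\zeta\in Z},\Gamma)$ the inner product on $\H=\underset Z\directint\H(t)\,d\lambda(t)$ is defined by integrating the pointwise inner products of representing measurable vector fields; in particular, taking the field against itself,
$$
\norm f{}^2 = \int_Z \norm{\bar f(t)}{}^2 \, d\lambda(t),
$$
where $\norm{\bar f(t)}{}$ denotes the norm in $\H(t)$. This formula is exactly what makes $\H$ a Hilbert space and $\bar f$ a genuine representative of $f$.

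From here the argument is immediate. By hypothesis $f=0$ as an element of $\H$, so the left-hand side vanishes and we obtain $\int_Z \norm{\bar f(t)}{}^2 \, d\lambda(t)=0$. The integrand $t\mapsto\norm{\bar f(t)}{}^2$ is non-negative by construction, and it is $\lambda$-measurable precisely because $\bar f$ is a measurable vector field, so condition (3) of Definition \ref{measurableFieldHilbertDef} applies. A non-negative $\lambda$-measurable function with integral zero must equal zero $\lambda$-almost everywhere; hence $\norm{\bar f(t)}{}^2=0$, equivalently $\bar f(t)=0$ in $\H(t)$, for $\lambda$-almost every $t\in Z$.

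The only step that deserves attention is the measurability of the pointwise norm $t\mapsto\norm{\bar f(t)}{}^2$, since without it the integral above would be meaningless. This is guaranteed directly by the axioms of a $\lambda$-measurable field of Hilbert spaces: one has $\binr{\bar f(t)}{\gamma(t)}$ measurable for every $\gamma\in\Gamma$, and applying this with $\gamma=\bar f$ yields measurability of $t\mapsto\norm{\bar f(t)}{}^2$. I therefore expect no genuine obstacle here; the lemma is essentially a restatement of the defining property of the norm on a direct integral, and the proof is a short verification rather than a substantial argument.
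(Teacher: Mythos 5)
Your proof is correct and follows essentially the same route as the paper's: both reduce the lemma to the identity $\norm f{}^2=\int_Z\norm{\bar f(t)}{}^2\,d\lambda(t)$ and the fact that a non-negative measurable function with vanishing integral is zero $\lambda$-almost everywhere. Your version is if anything slightly cleaner, since you argue directly rather than by contradiction and explicitly justify the measurability of $t\mapsto\norm{\bar f(t)}{}^2$ from the axioms of a measurable field.
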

\begin{proof}
By hypothesis, $0 = \underset Z\directint \bar f(t)d\mu(t)$. If there exists a measurable set $Z_0\subseteq Z$ such that $\lambda(Z_0)>0$ and for every $t\in Z_0$, $\bar f(t)\neq 0$, then $\norm{\bar f(t)}{}>0$.
$$
0=\norm f{}^2=\binr f f=\binr{\underset Z\int\bar f(t)d\mu(t)}{\underset Z\int\bar f(t)d\mu(t)}=
$$
$$
=\underset Z\int\binr{\bar f(t)}{\bar f(t)}d\mu(t)=\underset Z\int\norm{\bar f(t)}{}^2d\lambda(t)\ge\underset {Z_0}\int\norm{\bar f(t)}{0}^2d\lambda(t)>0
$$
which is a contradiction.
\end{proof}
\subsubsection{No Banach-Kronecker factor implies weakly mixing}
\begin{thm}\label{finiteSubThm}
Let $G$ be a locally compact Polish Moore group. Let $T:G\rightarrow NST(X)$ be an ergodic action of $G$ defined on $\sys$ a standard space. If $T$ is not weakly mixing, then there exists a finite dimensional non-trivial $G$-invariant subspace of $\Lp\infty{\sys}$.
\end{thm}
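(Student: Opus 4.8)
The plan is to combine the direct integral machinery assembled above with the Moore hypothesis, which forces every fibre of the decomposition to be finite dimensional. Since $T$ is not weakly mixing, there is a probability preserving ergodic action $S\colon G\to PPT(Y)$ on a standard probability space $\nsys$ for which $T\times S$ is not ergodic, so there is a bounded, non-constant, $T\times S$-invariant function $F\colon X\times Y\to\comp$. Exactly as in the proof of Theorem \ref{multipThm}, I would put
$$
(\phi(x))(y)=F(x,y)-\int_Y F(x,y)\,d\nu(y),
$$
which defines a measurable map $\phi\colon X\to\Lp2{\nsys}_0$; it is non-constant, for otherwise the common value would be an $S$-invariant function of $y$, hence constant by ergodicity of $S$, forcing $F$ to be constant. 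Writing $U$ for the reduced Koopman representation of $S$ (so $U_gf=f\circ S_{-g}$, a unitary representation since $S$ preserves $\nu$), the same computation as in Theorem \ref{multipThm} gives the intertwining relation $\phi(T_gx)=U_g(\phi(x))$ for every $g$ and $m$-almost every $x$.

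Next I would decompose $U$. Applying Theorem \ref{unitaryRepThm} to $\H:=\Lp2{\nsys}_0$ with a maximal commutative Von-Neumann sub-algebra of $U(G)'$ produces a standard Borel space $Z$, a measure $\mu$, a measurable field $(\bset{\H(\zeta)}_{\zeta\in Z},\Gamma)$ and a measurable field of representations with
$$
\H=\underset Z\directint \H(\zeta)\,d\mu(\zeta),\qquad U_g=\underset Z\directint \pi(\zeta)(g)\,d\mu(\zeta).
$$
By Theorem \ref{decompositionThm} the fibres $\pi(\zeta)$ are irreducible for $\mu$-almost every $\zeta$, and because $G$ is a Moore group every such $\pi(\zeta)$, hence every $\H(\zeta)$, is finite dimensional. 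Transporting $\phi$ through this isomorphism gives a jointly measurable field $(x,\zeta)\mapsto\overline\phi(x)(\zeta)\in\H(\zeta)$, and the intertwining relation becomes
$$
\overline\phi(T_gx)(\zeta)=\pi(\zeta)(g)\,\overline\phi(x)(\zeta)
$$
for $m\times\mu$-almost every $(x,\zeta)$, simultaneously for all $g$ in a fixed countable dense subgroup $G_0\le G$.

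Now I would extract the subspace. Each $\pi(\zeta)(g)$ is unitary, so the relation yields $\norm{\overline\phi(T_gx)(\zeta)}{}=\norm{\overline\phi(x)(\zeta)}{}$; by Proposition \ref{denseSubgroupErgProp} the $G_0$-action on $\sys$ is ergodic, so for $\mu$-almost every $\zeta$ the function $x\mapsto\norm{\overline\phi(x)(\zeta)}{}^2$ is $G_0$-invariant and therefore equal to a constant $r(\zeta)^2$. Since $\phi$ is non-constant, $r(\zeta)>0$ on a set of positive $\mu$-measure, and I would fix one point $\zeta_0$ of that set for which $\H(\zeta_0)$ is finite dimensional, of dimension $d$ say, and for which the slice $x\mapsto\overline\phi(x)(\zeta_0)$ is measurable. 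Choosing an orthonormal basis $e_1,\dots,e_d$ of $\H(\zeta_0)$ and setting $c_i(x)=\binr{\overline\phi(x)(\zeta_0)}{e_i}$, the identity $\sumit i1d\abs{c_i(x)}^2=r(\zeta_0)^2$ shows that each $c_i$ is bounded, so $c_i\in\Lp\infty{\sys}$; writing $\pi_{ij}(g)=\binr{\pi(\zeta_0)(g)e_j}{e_i}$, the intertwining relation gives $c_i\circ T_g=\sumit j1d\pi_{ij}(g)\,c_j$. Hence $V=\mathrm{span}\bset{c_1,\dots,c_d}$ is a finite dimensional $G$-invariant subspace of $\Lp\infty{\sys}$, and it is non-trivial because $\sumit i1d\abs{c_i}^2=r(\zeta_0)^2>0$.

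I expect the main obstacle to be the measure-theoretic bookkeeping inside the direct integral: transporting the single $\Lp2{\sys}$-valued map $\phi$ to a genuinely jointly measurable field $(x,\zeta)\mapsto\overline\phi(x)(\zeta)$ and then legitimately interchanging the ``for almost every $x$'' and ``for almost every $\zeta$'' quantifiers, so that the constancy of $x\mapsto\norm{\overline\phi(x)(\zeta)}{}$ and the measurability of the slice at the single chosen $\zeta_0$ are both valid. The passage to the countable dense subgroup $G_0$ via Proposition \ref{denseSubgroupErgProp} is what makes the intertwining relation hold simultaneously for all $g$ on one full-measure set, and the unitarity of the fibre representations is precisely what turns that relation into the uniform bound that places the coordinate functions in $\Lp\infty{\sys}$ rather than merely $\Lp2{\sys}$.
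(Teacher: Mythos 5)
There is a genuine gap at the decisive point of the theorem: non-triviality. In Theorem \ref{grandThm} and in this statement, ``non-trivial'' must mean \emph{containing a non-constant function} -- the constants always form a one-dimensional $G$-invariant subspace of $\Lp\infty{\sys}$, so exhibiting a nonzero invariant subspace proves nothing. Your selection criterion for the fibre is only $r(\zeta_0)>0$, and your closing justification ($\sumit i1d\abs{c_i}^2=r(\zeta_0)^2>0$) shows only that $V\neq\bset 0$: if the slice $x\mapsto\overline\phi(x)(\zeta_0)$ happened to equal a fixed vector $v\neq 0$ for a.e.\ $x$, then every $c_i$ would be a constant, $V$ would consist entirely of constants, and yet $r(\zeta_0)=\norm v{}>0$, so your criterion does not exclude this. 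The paper devotes a substantial intermediate step to exactly this issue: it proves there is a set $\T_0$ of \emph{positive} measure on which the slice $(\tilde\phi(\cdot))(t)$ is non-constant in $x$, by contradiction -- if a.e.\ slice were constant, it assembles the constant values into $E=\directint(\tilde\phi(x_0(t)))(t)\,d\lambda(t)$, checks that $\psi\inv(E)$ is $G_0$-invariant, hence constant by ergodicity of $S$ via Proposition \ref{denseSubgroupErgProp}, forcing $F$ to be constant. One could instead try to argue that a constant nonzero slice is a $G_0$-invariant vector of the fibre representation, which is (a.e.) a non-trivial irreducible representation of $G$ since ergodicity of $S$ excludes trivial fibres on positive measure; but passing from $G_0$-invariance of that vector to $G$-invariance requires continuity of the fibre representations, a point your write-up also does not address. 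As written, the proposal establishes only the vacuous statement that some nonzero finite-dimensional invariant subspace exists, and the theorem's actual content is missed.

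There is a second, related gap: your displayed relation $c_i\circ T_g=\sumit j1d\pi_{ij}(g)\,c_j$ is available, after the Fubini step at the fixed fibre $\zeta_0$, only for $g$ in the countable dense subgroup $G_0$. For $g\nin G_0$ the fibrewise identity obtained from Lemma \ref{uniqLem} holds for $\mu$-a.e.\ $\zeta$ with the null set depending on $g$, and the single point $\zeta_0$ may well lie in that exceptional set; so your construction directly yields a $G_0$-invariant subspace, not a $G$-invariant one. The paper closes this with a separate approximation argument: for $g\in G$ take $g_n\rightarrow g$ in $G_0$, use convergence in measure of $\tilde\phi\circ T_{g_n}$, pass to an a.e.\ convergent subsequence, and exploit the uniform bound $\norm{(\tilde\phi(\cdot))(t_0)}{}=C_0$ (which you do obtain) together with the closedness of the finite-dimensional space $\tilde V$ to conclude that $\tilde\phi(T_gx)(t_0)\in\tilde V$. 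You flag the quantifier bookkeeping as ``the main obstacle'' but do not resolve it; both this step and the non-constancy argument above must be supplied before the proof is complete.
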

\begin{proof} If $T$ is not weakly mixing, then there exists $\nsys$ a standard probability space, and $S:G\rightarrow PPT(Y)$ ergodic, such that $T\times S$ is not ergodic. If $T\times S$ is not ergodic, then there exists $F:X\times Y\rightarrow\bset{0,1}$ non-constant invariant function. Define the function $\phi:X\rightarrow \Lp2{\nsys}_0$ by:
$$
(\phi(x))(y) = F(x,y)-\int_{Y}F(x,y)d\nu(y)
$$
Note that $\phi(x)\in \Lp2{\nsys}_0$ and, as in the proof of 'the ergodic multiplier theorem', $\phi$ is not constant.\\
Denote by $Isom(\Lp2\nsys_0)$ the set of invertible isomorphisms of $\Lp2\nsys_0$ and define $U:G\rightarrow Isom(\Lp2{\nsys})_0$ the operators defined by $U_gh=h\circ S_{g\inv}$. Then, since $F$ is $T\times S$ invariant, it follows that for every $g\in G$ for $m$ almost every $x\in X$:
\begin{equation}\label{dualEq}
\begin{split}
\phi(T_gx) =  F(T_gx,\cdot) =F(T_gx,S_g(S_{-g}(\cdot))) = \\
=F(x,S_{-g}(\cdot)) = \phi(x)\circ S_{-g}=U_g(\phi(x))
\end{split}
\end{equation}
Let $G_0$ be a countable dense subgroup. For every $g\in G_0$ there is a set of full measure $X_g\subset X$ such that for every $x\in X_g$ equality (\ref{dualEq}) holds. Denote by $X_0=\bintersects g {g_0}  X_g$, then it is of full measure and for every $x\in X_0$ and $g\in G_0$ equation (\ref{dualEq}) holds.\\
Using theorem \ref{unitaryRepThm} there exists a $\lambda$- measurable field of Hilbert spaces over $\T$  $(\bset{H(t)}_{t\in\T},\Gamma)$ such that:
$$
\Lp2{\nsys}_0\simeq\underset \T\directint\H(t)d\lambda(t)=\H
$$
where $\lambda$ is a positive probability measure defined on $\T$. Denote by $\psi:\Lp2\nsys_0\rightarrow \H$ the isomorphism between the two Hilbert spaces, and define $\tilde U:G\rightarrow Isom(\H)$ on $\H$ by $\tilde U_g(h)=\psi(U_g(\psi\inv(h)))$. For every $h\in \H$ there exists $\bar h:\T\rightarrow(\bset{H(t)}_{t\in\T},\Gamma)$, such that for every $t\in\T$, $\bar h(t)\in\H(t)$ and:
$$
h=\underset\T\directint\bar h(t)\;d\lambda(t)
$$
In addition, by the same theorem (\ref{unitaryRepThm}) for every $g\in G$, the operator $\tilde U_g$ is diagonalisable. For every $x\in X_0$ by lemma \ref{uniqLem} there exists a set of full measure $\T_x\subset\T$ such that for every $t\in\T_x$ and $g\in G_0$:
\begin{equation}\label{decompEq}
(\psi(\phi(T_gx)))(t):=(\tilde\phi(T_gx))(t)=(\psi(\phi(x))\circ S_{-g})(t)=\tilde U_g(t)(\tilde\phi(x))(t)
\end{equation}
Note that the above equation is well defined, since $\tilde U_g$ is diagonalisable. By Fubini's theorem, for $\lambda$-almost every $t\in\T$ there exists a set of full measure $X_t\subseteq X_0$ such that equation (\ref{decompEq}) holds for $t$ and for every $x\in X_t$.\\
There exists a set of positive measure $\T_0\subseteq\T$ such that for every $t\in\T_0$, the function $(\tilde\phi(\cdot))(t)$ is not constant as a function of $X$. If not, for almost every $t\in\T$ fix $x_0(t)\in X_0$ such that $(\tilde\phi(x_0(t)))(t)=(\tilde\phi(x))(t)$ for $m$ almost every $x\in X$ and specifically for every $g\in G_0$, $(\tilde\phi(T_gx_0(t)))(t)=(\tilde\phi(x_0(t)))$. Define the function:
$$
E=\underset\T\directint(\tilde\phi(x_0(t)))(t)\;d\lambda(t)
$$
By definition $E\in \H$, and $\psi\inv(E)\in\Lp2\nsys_0$. In addition, for every $g\in G_0$:
$$
\tilde U_gE=\underset\T\directint\tilde U_g(t)(\tilde\phi(x_0(t)))(t)\;d\lambda(t)\overset{(\star)}=
$$
$$
=\underset\T\directint(\tilde\phi(T_gx_0(t)))(t)\;d\lambda(t)=\underset\T\directint(\tilde\phi(x_0(t))(t))\;d\lambda(t)=E
$$
where $(\star)$ is because $x_0\in X_0$, then there is a set of full measure $\T_{x_0}$ such that equation (\ref{decompEq}) holds for every $t\in \T_{x_0}$ and $g\in G_0$.\\
Now, by definition, $\psi\inv(\tilde U_gE)=U_g\psi\inv(E)=\psi\inv(E)\circ S_{g\inv}$, therefore we get that $\psi\inv(E)$ is $G_0$ invariant. Now, $S$ is ergodic, by proposition \ref{denseSubgroupErgProp} the action of $G_0$ on $\nsys$ is also ergodic. $\psi\inv(E)$ is a $G_0$-invariant function, therefore it is constant, but then the original function $F$, is also constant, which is a contradiction.\\
We conclude there exists a set of positive measure $\T_0\subseteq\T$ such that $(\tilde\phi(\cdot))(t)$ is not constant for every $t\in\T_0$. Finally, for almost every $t\in\T$, $\dim(\H(t))<\infty$ and $\tilde U_g(t)$ is unitary (according to the lemma \ref{uniqLem}), specifically there exists $t_0\in\T_0$ such that $\dim(\H(t_0))<\infty$, $\tilde U_g(t_0)$ is a unitary operator, and $(\tilde\phi(\cdot))(t_0)$ is not constant.\\
We will show that $\norm{\tilde\phi(\cdot)(t_0)}{}$ is bounded $m$ almost everywhere.\\
First, $T$ is ergodic, by proposition \ref{denseSubgroupErgProp} the action of $G_0$ on $\sys$ is also ergodic. The function $\norm{(\tilde\phi(\cdot))(t_0)}{}$ is $G_0$ invariant, since for every $g\in G_0$ we know that $(\tilde\phi(T_gx))(t_0)=\tilde U_g(t_0)(\tilde\phi(x))(t_0)$, and, $\tilde U_g(t_0)$ is a unitary operator, then:
$$
\norm{(\tilde\phi(T_gx))(t_0)}{}=\norm{\tilde U_g(t_0)(\tilde\phi(x))(t_0)}{}=\norm{(\tilde\phi(x))(t_0)}{}
$$
By ergodicity of $G_0$, $\norm{(\tilde\phi(\cdot))(t_0)}{}$ is constant. Denote $\norm{(\tilde\phi(\cdot))(t_0)}{}=C_0$ for some $C_0\in\R_+$, then specifically $\norm{(\tilde\phi(\cdot))(t_0)}{}\in\Lp\infty\sys$.\\
Define the function $\varphi:X\rightarrow\H(t_0)$ by $\varphi(x)=(\tilde\phi(x))(t_0)$. It is well defined. Define the space $\tilde V:=span\bset{\tilde U_g(t_0)(\varphi(\cdot));\;g\in G_0}$, then it is non trivial and $G_0$ invariant. We will show it is finite dimensional. Note that since $\dim(\H(t_0))=d<\infty$, then the operators $U_g$ are actually defined uniquely by matrices based on an orthonormal basis, $\bset{e_j}_{i=1}^d$. Denote by $\varphi_{j,k}(x)=\binr{e_j}{\varphi(x)}e_k$.\\
We will show $\tilde V\subseteq span\bset{\varphi_{j,k};~1\le j,k\le d}$. By definition $\varphi=\sumit j 1 d \binr{e_j}{\varphi(x)}e_j\in span\bset{\varphi_{j,k};~1\le j,k\le d}$. Let there be $g\in G_0$ then
$$
\varphi\circ T_g(x)=\tilde U_g(t_0)\varphi(x)=\tilde U_g(t_0)\bb{\sumit j 1 d \binr{e_j}{\varphi(x)}e_j}=
$$
$$
=\sumit j 1 d \binr{e_j}{\varphi(x)}\tilde U_g(t_0)e_j=\sumit j 1 d \binr{e_j}{\varphi(x)}\sumit k 1 da_k^je_k=
$$
$$
=\sumit j 1 d\sumit k 1 d a_k^j\binr{e_j}{\varphi(x)}e_k=\sumit j 1 d\sumit k 1 d a_k^j\varphi_{j,k}(x)\in span\bset{\varphi_{j,k};~1\le j,k\le d}
$$
Specifically $\tilde V$ is finite dimensional. Denote by $\bset{\varphi_1(x),\cdots,\varphi_n(x)}$ an orthonormal basis for $\tilde V$.\\
Next, we know there exists an isomorphism between $\H(t_0)$ and $\R^n$, denote it by $\tilde\pi$. In addition, since $\tilde V$ is not trivial, there exists $j$ such that for some $1\le k\le n$, the function $\tilde\pi(\varphi_k(x))_j$ is not constant. Define $\pi:\H(t_0)\rightarrow\R$ by $\pi(h)=(\tilde\pi(h))_j$.\\
Define the space $V$ by $V=span\bset{\pi(f(\cdot));~f\in\tilde V}$.
\begin{enumerate}
\item $V$ is not trivial: the function $\tilde\pi(\varphi_k(x))_j = \pi(\varphi_k(x))\in V$, and it is a non-constant function.
\item $V$ is finite dimensional: we will show that $\bset{\pi(\varphi_1),\dots,\pi(\varphi_n)}$ is a basis for $V$. Let $f\in V$, then $f(x)=\pi(v(x))=(\tilde\pi(v(x)))_j$ for some $v\in\tilde V$. Since $\bset{\varphi_k(x)}_{k=1}^n$ is a basis for $\tilde V$, there exists $\bset{a_k}$ such that $v=\sumit k 1 n a_k\varphi_k(x)$.
$$
f(x)=(\tilde\pi(v(x)))_j=\bb{\tilde\pi\bb{\sumit k 1 n a_k\varphi_k(x)}}_j=
$$
$$
=\sumit k 1 n a_k(\tilde\pi(\varphi_k(x)))_j=\sumit k 1 n a_k\pi(\varphi_k(x))
$$
\item $V$ is $G_0$ invariant: Let $f\in V$, $f=\tilde\pi(v)$ and $g\in G_0$ then for some $\tilde v\in \tilde V$:
$$
f(T_gx)=\pi\bb{v\bb{T_gx}}=\pi\bb{\tilde U_g\bb{t_0}v(x)}=\pi\bb{\tilde v(x)}\in V
$$
since $\tilde V$ is $G_0$-invariant by definition.
\item $V\subseteq\Lp\infty\sys$: Let $\pi(v)=f\in V$ then:
$$
\abs{f}=\abs{\pi(v)}=\abs{(\tilde\pi(v))_j}\le\norm{\tilde\pi(v)}{}^2=\norm{v}{}^2\le C_0^2
$$
since we saw for every $v\in V, \norm v{}<C_0$.
\end{enumerate}
Finally, we will show $V$ is $G$-invariant.\\
For every $g\in G$ we want to show that $\pi(\tilde\phi(T_gx)(t_0))\in V$, for which it is enough to show $\tilde\phi(T_gx)(t_0)\in\tilde V$. Let there be $g\in G$ and let $\bset {g_n}\subseteq G_0$ be a sequence such that $g_n\underset{n\rightarrow\infty}\longrightarrow g$, then $\norm{\tilde\phi\circ T_{g_n}}{}$ converges in measure to $\norm{\tilde\phi\circ T_g}{}$.  There exists a sub-sequence $\bset{n_k}$ such that $\norm{\tilde\phi\circ T_{g_{n_k}}}{}\overset {a.e}\rightarrow \norm{\tilde\phi\circ T_g}{}$. Since $\tilde V$ is a closed subset in the set of bounded function from $X$ to $\H(t_0)$, if we will show that $\tilde\phi(T_gx)(t_0)$ is bounded, then $\tilde\phi(T_gx)(t_0)\in\tilde V$. We have a uniform bound on $\norm{\tilde\phi(T_{g_n}x)(t_0)}{}= C_0$:
$$
\norm{\tilde\phi(T_gx)(t_0)}{}\overset{a.e}= \limit k\infty\norm{\tilde\phi(T_{g_{n_k}}x)(t_0)}{}=C_0
$$
We conclude $V$ is $G$-invariant.
\end {proof}
\begin{rmk}
While proving theorem \ref{grandThm} we actually proved that an action is weakly mixing if and only if it has a non trivial Banach-Kronecker factor. Moreover, we proved that if an action is weakly mixing, then it has a special non-trivial Banach-Kronecker factor, one that is a unitary action on a Euclidean sphere.
\end{rmk}
\subsection{Corollaries}
\subsubsection{Either finite dimensional or mildly mixing}
\begin{defn}\label{mildMixDef}
A non-singular action of $G$ on a standard space is called {\bf mildly mixing} if for any non-singular properly ergodic action of $G$, the product action is also ergodic.
\end{defn}
\begin{defn}\label{rigidSetDef}
A {\bf rigid set} is a set $B\in\B$ such that there exists a sequence $g_n\rightarrow\infty$ such that:
$$
m(B\symdif g_nB)\underset{n\rightarrow\infty}\longrightarrow 0
$$
Where $g_n\rightarrow\infty$ means that for every compact set $K\subset G$ there exists $N$ such that for every $n>N,\; g_n\nin K$.\\
An action is said to {\bf have no rigid factor} if no non-trivial rigid sets exist.
\end{defn}
\begin{thm}[Schmidt \& Walters \cite{Schmidt1982}]\label{mildRigidThm}
Let $G$ be a locally compact second countable group. Let $T:G\rightarrow NST(X)$ be a properly ergodic action on a standard probability space. This action is mildly mixing if and only if it has no rigid factors.
\end{thm}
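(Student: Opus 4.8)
The plan is to prove the two contrapositive implications separately, in each case turning "rigidity'' into a statement about the Koopman isometry and a sequence escaping to infinity.

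\emph{A rigid factor obstructs mild mixing.} Assume $T$ admits a non-trivial rigid set $B$, so there is $g_n\to\infty$ with $m(g_nB\symdif B)\to 0$. I would first pass to a subsequence with geometric control $m(g_nB\symdif B)\le 2^{-n}$, so that $\mathbf 1_B\circ T_{g_n}\to\mathbf 1_B$ rapidly, and then work inside the factor generated by the $G$-orbit of $B$, on which $\{g_n\}$ acts as an asymptotically invariant (rigid) sequence. The goal is to manufacture from $\{g_n\}$ a properly ergodic non-singular action $S$ on some $\nsys$ for which $T\times S$ is not ergodic. The action $S$ must be tailored so that it is itself rigid along the same sequence $\{g_n\}$ and so that the two rigidities can be synchronised into a genuine non-product joining: concretely one seeks a measurable $\Psi\colon X\to\Lp2\nsys_0$ that is equivariant for $T$ and the Koopman isometry $U^S$ and whose existence is forced by the matching rigidity, after which a set of the form $\{(x,y):\Psi(x)(y)\in A\}$ is $T\times S$-invariant. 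Producing this bespoke properly ergodic probe $S$ from the rigidity sequence is, I expect, the \emph{main obstacle} of the whole theorem; it is also exactly where proper ergodicity is essential, since an escaping sequence $g_n\to\infty$ can only be absorbed by an action with null orbits, whereas a probability-preserving rotation would detect merely compact (Kronecker) behaviour.

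\emph{No rigid factor forces mild mixing.} I argue the contrapositive: suppose $T$ is not mildly mixing, so there is a properly ergodic non-singular $S$ on $\nsys$ with $T\times S$ non-ergodic, and I extract a non-trivial rigid factor of $T$. Start from a non-constant $(T\times S)$-invariant set $E$ and consider the fibre map $x\mapsto E_x=\{y:(x,y)\in E\}$. For a non-singular $S$ an invariant set does not yield an invariant $L_2$-vector on the nose, because the Koopman isometries carry Radon--Nikodym weights; the clean device is to pass to the Maharam extensions of $T$ and $S$, where the invariant set lifts to an invariant set for the resulting measure-preserving product and ordinary isometric reasoning applies. On the extension one builds an equivariant $\Phi\colon X\to\Lp2\nsys_0$ with $\Phi\circ T_g=U^S_g\Phi$; then $x\mapsto\norm{\Phi(x)}{}$ is $T$-invariant, hence a constant $c_0>0$, the positivity coming from non-constancy of $\Phi$ (otherwise $\Phi$ would be $S$-invariant in the second variable and ergodicity of $S$ would make $E$ trivial).

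The decisive point is to produce, from this non-constant equivariant $\Phi$, a single sequence $g_n\to\infty$ along which the associated factor of $T$ is asymptotically invariant. Here I would use that $T$ is properly ergodic, hence conservative: Poincar\'e recurrence applied to a positive-measure level slice $\{x:\norm{\Phi(x)-v}{}<\delta\}$ supplies return times $g_n$ with $T_{g_n}x$ back in the slice, so that $U^S_{g_n}\Phi(x)=\Phi(T_{g_n}x)$ stays within $2\delta$ of $\Phi(x)$; and because every $T$-orbit is $m$-null, these return times cannot be confined to any fixed compact subset of $G$, forcing $g_n\to\infty$. Consequently $\Phi\circ T_{g_n}\to\Phi$ in measure, and the factor of $T$ generated by the sets $\{x:\Phi(x)\in A\}$ is invariant under the escaping, asymptotically trivial sequence $\{g_n\}$, i.e. a non-trivial rigid factor. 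I expect the careful bookkeeping needed to extract one sequence that simultaneously gives the asymptotic invariance \emph{and} the escape $g_n\to\infty$ on a positive-measure set to be the subtle step of this direction, and it is precisely proper ergodicity of $T$ (null orbits), and not mere ergodicity, that rules out the competing alternative $U^S_{g_n}\to 0$ and thereby delivers rigidity rather than mixing.
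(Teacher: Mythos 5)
First, a framing remark: the paper does not prove this theorem at all --- it is quoted from Schmidt \& Walters \cite{Schmidt1982} and used as a black box --- so there is no internal proof to compare against; your text must be judged on its own. Judged that way, it is a roadmap rather than a proof, and in both directions it stops exactly at the decisive step. In the direction ``rigid factor $\Rightarrow$ not mildly mixing,'' everything hinges on manufacturing, from an arbitrary escaping sequence $g_n\to\infty$ in a general locally compact second countable group, a properly ergodic non-singular action $S$ rigid along (a subsequence of) the same $g_n$, together with the equivariant map $\Psi$ that produces a non-trivial $T\times S$-invariant set. You explicitly defer this (``the main obstacle''), but it is the engine of the whole direction: for $\Z$ this is the Furstenberg--Weiss tower/odometer-type construction of an infinite measure-preserving transformation rigid along a subsequence, and extending it to arbitrary escaping sequences in general l.c.s.c.\ groups is precisely the substantive content of Schmidt--Walters. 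Nothing in your sketch indicates how to carry it out, so this direction is not proved.

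In the converse direction there are two concrete failures beyond bookkeeping. (i) The recurrence-on-slices argument, as written, yields for each fixed vector $v$ in the essential range of $\Phi$ and each compact $K\subset G$ some $g\notin K$ with $\Vert U^S_g v-v\Vert$ small --- that is, approximate invariance of a single vector under the Koopman isometry of $S$. A rigid factor of $T$ requires one sequence $g_n\to\infty$ with $\Phi\circ T_{g_n}\to\Phi$ in measure, i.e.\ $U^S_{g_n}\to \mathrm{Id}$ \emph{simultaneously} on the whole (typically infinite) essential range of $\Phi$. Conservativity applied slice-by-slice gives return elements that depend on the slice, and distinct slices may have disjoint sets of returns outside a given compact, so the diagonalization you gesture at does not follow from what you have; you concede this is ``the subtle step'' but do not resolve it, and this is where the actual proof does real work. (ii) The Maharam-extension device is likewise only asserted: the Maharam extension of $T\times S$ is \emph{not} the product of the Maharam extensions of $T$ and $S$ (it is a single skew product over $X\times Y$ with the sum of the two log Radon--Nikodym cocycles), the fibre vectors then live in $L_2$ of an infinite measure space where indicators of fibres need not be square-integrable, and one must still descend the rigid factor from the extension back to $T$ itself; none of this is addressed, even though it is exactly the point --- for non-singular $S$ the fibre map intertwines via unweighted composition, not the isometric Koopman operator, so without the Maharam step your ``$\Vert\Phi(x)\Vert$ is constant'' argument does not get off the ground. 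Finally, the claim that null orbits force the return elements to escape to infinity should be replaced by the group form of conservativity (a.e.\ returns outside every compact set), which is standard but does need to be invoked correctly rather than derived from ``orbits are null'' by fiat.
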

\begin{defn}\label{mildMixRepDef}
A representation of $G$ on a Hilbert space, $\H$, is called {\bf mildly mixing} if it has no rigid factor, meaning for every $0\neq h\in\H$ and $g_n\rightarrow\infty$ we have 
$$
\liminf\norm{\pi(g_n)h-h}{}>0
$$
\end{defn}
\begin{defn}
Let $G$ be a locally compact Polish group. A representation $\pi$ is called {\bf mixing} (or {\bf strongly mixing}) if for every $h_1,h_2\in\H$ the function $g\mapsto\binr{\pi(g)h_1}{h_2}$ is in $C_0(G)$, meaning it is continuous and vanishes at infinity.
\end{defn}
\begin{rmk}
Let $G$ be a locally compact Polish group, and let $T:G\rightarrow PPT(X)$ be an action. The representation induced by $T$ is mildly mixing (mixing) if and only if the action is. It is not difficult to verify that if a representation is strongly mixing, then it is mildly mixing.
\end{rmk}
\begin{thm}\label{corollaryThm}
Let $G$ be a locally compact Polish group such that any irreducible unitary representation of $G$ is either finite dimensional or mildly mixing. Let $T:G\rightarrow NST(X)$ be a properly ergodic action of $G$ defined on $\sys$ a standard probability space. Then the action is weakly mixing if and only if every finite dimensional, $G$-invariant subspace of $\Lp\infty{\sys}$ is trivial.
\end{thm}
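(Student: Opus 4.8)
The plan is to prove both implications, reducing most of the work to results already in hand and isolating the one new ingredient, namely the treatment of the infinite--dimensional pieces of the spectral decomposition. The forward implication is immediate: if $T$ is weakly mixing, then by Theorem~\ref{nonMixProp} together with Remark~\ref{noNeedFiniteRmk} (which applies to every locally compact Polish group) there can be no non-trivial finite dimensional $G$-invariant subspace of $\Lp\infty{\sys}$, since that is precisely the contrapositive of that theorem. It therefore remains to show that if every finite dimensional $G$-invariant subspace of $\Lp\infty{\sys}$ is trivial, then $T$ is weakly mixing. If $G$ is compact every irreducible representation is finite dimensional, so $G$ is a Moore group and Theorem~\ref{grandThm} applies; hence I may assume $G$ is non-compact, so that $g_n\rightarrow\infty$ is meaningful. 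Arguing by contradiction, suppose $T$ is not weakly mixing. Then, exactly as at the start of the proof of Theorem~\ref{finiteSubThm}, there is an ergodic probability preserving $S:G\rightarrow PPT(Y)$ with $T\times S$ non-ergodic, a non-constant invariant $F:X\times Y\rightarrow\bset{0,1}$, and an equivariant $\phi:X\rightarrow\Lp2{\nsys}_0$ with $\phi(T_gx)=U_g(\phi(x))$, where $U$ is the reduced Koopman representation of $S$; I decompose $U$ as $\underset\T\directint\pi(t)\,d\lambda(t)$ into irreducibles via Theorems~\ref{unitaryRepThm} and~\ref{decompositionThm}. The set $\T_{\mathrm{mix}}=\bset{t\in\T;\ \dim\H(t)=\infty}$ is measurable and $G$-invariant, inducing a $G$-invariant splitting $\Lp2{\nsys}_0=\H_{\mathrm{fin}}\oplus\H_{\mathrm{mix}}$ into diagonalisable summands with reduced Koopman representations $\pi_{\mathrm{fin}},\pi_{\mathrm{mix}}$. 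Each fibre $\pi(t)$ with $t\in\T_{\mathrm{mix}}$ is mildly mixing by hypothesis, and a direct integral of mildly mixing representations is again mildly mixing: if $\norm{\pi_{\mathrm{mix}}(g_n)h-h}{}\rightarrow 0$ along $g_n\rightarrow\infty$, then along a subsequence $\pi(g_{n_k},t)\bar h(t)\rightarrow\bar h(t)$ for $\lambda$-a.e.\ $t$, forcing $\bar h(t)=0$ fibrewise and hence $h=0$. Thus $\pi_{\mathrm{mix}}$ is a mildly mixing representation.

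The crux is to show that the mildly mixing part of $\phi$ vanishes. Writing $\phi=\phi_{\mathrm{fin}}+\phi_{\mathrm{mix}}$, equivariance is preserved summand by summand, so $\phi_{\mathrm{mix}}(T_gx)=\pi_{\mathrm{mix}}(g)\phi_{\mathrm{mix}}(x)$ for $g$ in a fixed countable dense subgroup $G_0$ and $m$-a.e.\ $x$. As each $\pi_{\mathrm{mix}}(g)$ is unitary, $x\mapsto\norm{\phi_{\mathrm{mix}}(x)}{}$ is $G_0$-invariant and hence constant $m$-a.e.\ by ergodicity of $T$ (Proposition~\ref{denseSubgroupErgProp}); call it $c$. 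I claim $c=0$. Assume $c>0$. Since $T$ is properly ergodic it is conservative (a totally dissipative ergodic action is carried by a single full-measure orbit, contradicting proper ergodicity), so, modelling $X$ as a compact metric space on which $m$ has full support, Poincaré recurrence gives, for $m$-a.e.\ $x$, a sequence $g_n\rightarrow\infty$ with $T_{g_n}x\rightarrow x$. Choosing a Lusin set $K$ with $m(K)>\tfrac12$ on which $\phi_{\mathrm{mix}}$ is continuous, and selecting for a typical $x\in K$ recurrence times along which $T_{g_n}x\in K$ as well, I obtain $\pi_{\mathrm{mix}}(g_n)\phi_{\mathrm{mix}}(x)=\phi_{\mathrm{mix}}(T_{g_n}x)\rightarrow\phi_{\mathrm{mix}}(x)\neq 0$; that is, $\phi_{\mathrm{mix}}(x)$ is a rigid vector for $\pi_{\mathrm{mix}}$ in the sense of Definition~\ref{mildMixRepDef}, contradicting mild mixing. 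Hence $c=0$ and $\phi_{\mathrm{mix}}=0$ $m$-a.e.

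It follows that all the mass of $\phi$ lies on finite dimensional fibres, so the non-constancy set $\T_0$ of $\tilde\phi(\cdot)(t)$ is contained, up to a $\lambda$-null set, in $\bset{t;\ \dim\H(t)<\infty}$. Choosing $t_0\in\T_0$ with $\dim\H(t_0)<\infty$ and running the construction of the proof of Theorem~\ref{finiteSubThm} verbatim from that point produces a non-trivial finite dimensional $G$-invariant subspace of $\Lp\infty{\sys}$, contradicting the standing assumption. This establishes that triviality of all finite dimensional invariant subspaces forces weak mixing, and with the forward implication completes the equivalence.

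I expect the key step to be the main obstacle: extracting a genuine rigid vector for $\pi_{\mathrm{mix}}$ from recurrence of $T$. The difficulty is that $\phi_{\mathrm{mix}}$ is only measurable, so one must combine conservativity with Lusin's theorem and a ratio (maximal) ergodic estimate to guarantee that the recurrence times $g_n\rightarrow\infty$ can be chosen with $T_{g_n}x\in K$, ensuring that continuity on $K$ applies and that the limit is $\phi_{\mathrm{mix}}(x)$ itself rather than a different point of the sphere of radius $c$. The non-singularity of $T$ is exactly what blocks the cleaner alternative of averaging the rank-one projections onto $\phi_{\mathrm{mix}}(x)$ against an invariant probability measure and invoking Schur's lemma, which is why this recurrence argument seems unavoidable.
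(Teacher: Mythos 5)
Your proposal is correct, and its skeleton is the paper's: the same reduction to $S$, $F$, and the equivariant $\phi$, the same direct-integral decomposition of $\Lp2{\nsys}_0$ into irreducibles, and the same dichotomy (a non-constant finite dimensional fibre feeds into the construction of Theorem \ref{finiteSubThm}, while infinite dimensional fibres are ruled out by playing recurrence against mild mixing); the forward implication via Remark \ref{noNeedFiniteRmk} is identical. Where you genuinely diverge is in the execution of the crux. The paper fixes a single $t_0\in\T_0$ with $\dim\H(t_0)=\infty$, invokes recurrence to get $T_{g_n}x\rightarrow x$, and simply asserts $\norm{\tilde\phi(T_{g_n}x)(t_0)-\tilde\phi(x)(t_0)}{}\rightarrow 0$; since $\tilde\phi(\cdot)(t_0)$ is merely measurable, that convergence is exactly what your Lusin-set refinement (return times chosen so that $T_{g_n}x\in K$, continuity on $K$) supplies, and it is left implicit in the paper. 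Similarly, where the paper says ``ergodic implies recurrent,'' you correctly derive conservativity from \emph{proper} ergodicity, making visible where that hypothesis enters. Your other deviation --- aggregating the infinite dimensional fibres into one invariant summand $\H_{\mathrm{mix}}$ and proving that a direct integral of mildly mixing representations is mildly mixing via the a.e.-subsequence trick --- replaces the paper's pointwise-in-$t_0$ argument; it buys the cleaner conclusion $\phi_{\mathrm{mix}}=0$ (so, by Fubini, non-constancy lives only on finite dimensional fibres) at the cost of one extra lemma, whereas the paper's version is shorter. One point to pin down when writing this up: you need the return times to leave every compact subset of $G$ while lying in the countable dense subgroup $G_0$ and satisfying $T_{g_n}x\in K$; bare Poincar\'e recurrence does not give this, but the Schmidt--Walters notion of conservativity for non-singular actions of locally compact second countable groups does, so cite \cite{Schmidt1982} (the same fact underlies Theorem \ref{mildRigidThm}, and is what the paper itself tacitly relies on).
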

\begin{proof} Note that by remark \ref{noNeedFiniteRmk}, if there exists a finite dimensional invariant $\Lp\infty{\sys}$ subspace, then the action is not weakly mixing. We will show that if the action is not weakly mixing, then there exists a finite dimensional invariant $\Lp\infty{\sys}$ subspace.\\
If $T$ is not weakly mixing, then there exists $\nsys$ a standard probability space, and $S:G\rightarrow PPT(Y)$ ergodic, such that $T\times S$ is not ergodic. If $T\times S$ is not ergodic, then there exists $F:X\times Y\rightarrow\bset{0,1}$ non-constant invariant function. Define the function $\phi$ and the operator $U$ as in the proof of theorem \ref{finiteSubThm}. Then, since $F$ is $T\times S$ invariant, it follows that for every $g\in G$ for $m$ almost every $x\in X$ equation (\ref{dualEq}) holds.\\
Let $G_0\subseteq G$ be a countable dense subgroup. For every $g\in G_0$ there is a set of full measure $X_g\subset X$ such that for every $x\in X_g$ equality (\ref{dualEq}) holds. Denote by $X_0=\bintersects g {G_0} X_g$. Using theorem \ref{unitaryRepThm}
$$
\Lp2{\nsys}_0\simeq\underset \T\directint\H(t)d\lambda(t)
$$
Using the same notations as in the proof of \ref{finiteSubThm}, there exists a set of positive measure $\T_0\subseteq\T$ such that for every $t\in\T_0$, the function $(\tilde\phi(\cdot))(t)$ is not constant as a function of $X$. If there exists $t_0\in\T_0$ such that $\dim(\H(t_0))<\infty$, then as in the proof of theorem \ref{finiteSubThm}, we have a non-trivial finite dimensional invariant subspace in $\Lp\infty\sys$.\\
Otherwise, for every $t\in\T_0$ we have $\dim(\H(t_0))=\infty$. For every such $t\in\T_0$, by hypothesis, the representation of $G$ on $\H(t)$ is mildly mixing, which means every rigid factor is trivial.\\
We will show that there exists a rigid factor to get a contradiction. Since $T_0$ is an ergodic action, it is recurrent and therefore for almost every $x\in X$ there exists a sequence $\bset{g_n}=\bset{g_n(x)}\subset G_0$ such that for $T_{g_n}x\rightarrow x$. Specifically there exists $x$ such that $\tilde\phi(x)(t_0)\neq \tilde\phi\bb{T_{g_n}x}(t_0)$, but $\norm{\tilde U_{g_n}\tilde\phi(x)(t_0)-\tilde\phi(x)(t_0)}{}=\norm{\tilde\phi\bb{T_{g_n}x}(t_0)-\tilde\phi(x)(t_0)}{}\rightarrow0$, but since the representation is mildly mixing, $\tilde\phi(x)(t_0)=0$, which is a contradiction. We conclude that necessarily there exists $t_0\in\T_0$ such that $\dim(\H(t_0))<\infty$, and therefore there exists a finite dimensional non-trivial $G$-invariant $\Lp\infty\sys$ subspace.
\end{proof}
\begin{prop}\label{mwkIsRGuyProp}
Let $G$ be a locally compact Polish group. If every weakly mixing representation is mildly mixing, then every irreducible representation of $G$ is either finite dimensional or mildly mixing.
\end{prop}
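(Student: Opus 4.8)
The plan is to establish the dichotomy directly. Fix an arbitrary irreducible unitary representation $\pi$ of $G$ on a Hilbert space $\H$, and show that whenever it fails to be finite dimensional it must be mildly mixing. If $\dim\H<\infty$ there is nothing to prove, so the substance of the argument is the case $\dim\H=\infty$, and the whole strategy is to route through weak mixing and then apply the hypothesis.

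First I would observe that irreducibility rules out finite dimensional sub-representations. Any finite dimensional $\pi$-invariant subspace $W\subseteq\H$ is automatically closed, and since $\dim\H=\infty$ it is a \emph{proper} subspace; by irreducibility the only proper closed invariant subspace is $\{0\}$, so $W=\{0\}$. In particular $\pi$ has no non-zero invariant vectors and, more generally, no non-zero finite dimensional sub-representation. This is the one place where infinite dimensionality is genuinely used.

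Next I would invoke the Bergelson--Rosenblatt characterization of weak mixing, namely the equivalence between items (1) and (3) of Theorem \ref{bergelsonRosenblattThm}: a unitary representation is weakly mixing exactly when it contains no non-zero finite dimensional sub-representation. Combined with the previous step this shows that $\pi$ is weakly mixing in the sense of Definition \ref{weakMixRepDef}, i.e.\ $\m(\abs{\binr{\pi(g)f_1}{f_2}})=0$ for all $f_1,f_2\in\H$. Now the hypothesis applies verbatim: every weakly mixing representation of $G$ is mildly mixing, so $\pi$ is mildly mixing. This yields the desired dichotomy, each irreducible representation being either finite dimensional (when $\dim\H<\infty$) or mildly mixing (when $\dim\H=\infty$).

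The only delicate point, and the step I would be most careful about, is the invocation of Theorem \ref{bergelsonRosenblattThm}: its statement is phrased for a reduced Koopman representation, whereas here $\pi$ is an abstract irreducible representation. I would therefore emphasize that the equivalence (1)$\Leftrightarrow$(3) is purely representation-theoretic, expressing weak mixing (measured by the invariant mean $\m$ on $WAP(G)$) in terms of the absence of finite dimensional sub-representations, so it transfers without change to $\pi$. Everything else is immediate from the definitions of irreducibility and of weak and mild mixing for representations.
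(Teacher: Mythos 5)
Your proof is correct and takes essentially the same route as the paper: both rest on the Bergelson--Rosenblatt equivalence between weak mixing and the absence of non-zero finite dimensional sub-representations, followed by a direct application of the hypothesis, with only the case split organized differently (you case on $\dim\H$, the paper cases on whether $\pi$ is weakly mixing). Your added remark that the equivalence (1)$\Leftrightarrow$(3) transfers from reduced Koopman representations to abstract unitary representations addresses a point the paper's own proof glosses over, and is a welcome clarification rather than a deviation.
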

\begin{proof}
Let $\pi$ be an irreducible representation. If $\pi$ is not weakly mixing, then according to theorem \ref{bergelsonRosenblattThm}, there exists a finite dimensional sub-representation, but this representation is irreducible, we conclude it is finite dimensional. Otherwise, $\pi$ is weakly mixing, and by hypothesis mildly mixing.
\end{proof}
\begin{defn}\label{mwm}
A group $G$ is called {\bf minimally weakly mixing} (m.w.m) if $\overline{B(G)}=AP(G)\bigoplus C_0(G)$, where:
$$
B(G):=span\bset{\binr{\pi(*)f}h;~f,h\in\H, \pi \text{ a continuous irreducible representation}}
$$
It is called {\bf minimally weakly almost periodic} if $\overline{B(G)}=C\bigoplus C_0(G)$, where $C$ denotes the constant functions.
\end{defn}
\begin{rmk}\label{mwmExRmk}
Examples of such groups are discussed in \cite{Chou1980}.
\end{rmk}
\begin{thm}[Bergelson \& Rosenblatt \cite{Bergelson1988}]\label{weakIsStrongThm}
If $G$ is minimally weakly mixing, then any weakly mixing representation is strongly mixing.
\end{thm}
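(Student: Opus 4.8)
The plan is to work entirely at the level of the matrix coefficients of the representation and to combine two structural facts: the canonical decomposition of weakly almost periodic functions supplied by the invariant mean $\m$, and the hypothesis that $\overline{B(G)}$ splits as $AP(G)\oplus C_0(G)$. Fix a weakly mixing representation $\pi$ of $G$ on a separable Hilbert space $\H$ and, for $f,h\in\H$, set $\varphi_{f,h}(g)=\binr{\pi(g)f}h$. By the Example following Definition \ref{APandWAPDef} every $\varphi_{f,h}$ lies in $WAP(G)$, and Definition \ref{weakMixRepDef} says precisely that $\m(\abs{\varphi_{f,h}})=0$; that is, each matrix coefficient is a flight function. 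Writing $W_0(G):=\bset{\varphi\in WAP(G);~\m(\abs\varphi)=0}$ for the space of flight functions, the goal reduces to showing $\varphi_{f,h}\in C_0(G)$, since continuity of $\pi$ already gives continuity of $\varphi_{f,h}$ and $C_0(G)$-membership is exactly the definition of strong mixing.

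First I would record the two properties of $\m$ that make the m.w.m. splitting usable, citing them as standard from \cite{Glasner2003}. The mean decomposition $WAP(G)=AP(G)\oplus W_0(G)$ is a genuine direct sum: a nonzero almost periodic function satisfies $\m(\abs\varphi)>0$, so $AP(G)\cap W_0(G)=\bset0$. Moreover $C_0(G)\subseteq W_0(G)$, because the unique invariant mean annihilates functions vanishing at infinity and $\abs\varphi\in C_0(G)$ whenever $\varphi\in C_0(G)$. These facts ensure that any $AP\oplus C_0$ decomposition of an element of $\overline{B(G)}$ is automatically the canonical $AP\oplus W_0$ decomposition.

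The substantive step is to place $\varphi_{f,h}$ inside $\overline{B(G)}$, so that the m.w.m. hypothesis applies (recall that $W_0(G)$ can be strictly larger than $C_0(G)$, so membership in $WAP(G)$ alone is not enough). Since $\H$ is separable, Theorem \ref{unitaryRepThm} lets me write $\pi=\directint\pi(\zeta)\,d\mu(\zeta)$ with $\pi(\zeta)$ irreducible for $\mu$-almost every $\zeta$, together with $f=\directint\bar f(\zeta)\,d\mu(\zeta)$ and $h=\directint\bar h(\zeta)\,d\mu(\zeta)$. Then $\varphi_{f,h}(g)=\underset Z\int\binr{\pi(\zeta)(g)\bar f(\zeta)}{\bar h(\zeta)}\,d\mu(\zeta)$ is an integral of matrix coefficients of irreducible representations, each of which lies in $B(G)$ by definition; approximating $\bar f,\bar h$ by simple vector fields and the integral by finite sums expresses $\varphi_{f,h}$ as a sup-norm limit of elements of $B(G)$, hence $\varphi_{f,h}\in\overline{B(G)}$. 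This is the step I expect to be the main obstacle: converting the direct-integral formula into genuine \emph{uniform} approximation over $g$, which I would handle through the Cauchy--Schwarz bound $\abs{\binr{\pi(\zeta)(g)\bar f(\zeta)}{\bar h(\zeta)}}\le\norm{\bar f(\zeta)}{}\norm{\bar h(\zeta)}{}$ (uniform in $g$ since $\pi(\zeta)(g)$ is unitary) together with dominated convergence in $\zeta$.

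With these pieces in place the conclusion is immediate. Using the m.w.m. hypothesis (Definition \ref{mwm}) write $\varphi_{f,h}=a+c$ with $a\in AP(G)$ and $c\in C_0(G)$. Since $C_0(G)\subseteq W_0(G)$ and $\varphi_{f,h}\in W_0(G)$ by weak mixing, the almost periodic part $a=\varphi_{f,h}-c$ also lies in $W_0(G)$; therefore $a\in AP(G)\cap W_0(G)=\bset0$ and $\varphi_{f,h}=c\in C_0(G)$. As $f,h\in\H$ were arbitrary, every matrix coefficient of $\pi$ belongs to $C_0(G)$, which is precisely the statement that $\pi$ is strongly mixing.
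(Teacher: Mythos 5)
The paper itself gives no proof of this theorem --- it is quoted from Bergelson \& Rosenblatt with a citation --- so there is no internal argument to compare against; I am judging your proposal on its own. Your final assembly is the standard and correct one: matrix coefficients of a weakly mixing representation are flight functions, the canonical (deLeeuw--Glicksberg) decomposition $WAP(G)=AP(G)\oplus W_0(G)$ is a genuine direct sum because a nonzero almost periodic function has $\m(\abs{\varphi})>0$, and $C_0(G)\subseteq W_0(G)$ for noncompact $G$; given those facts, writing $\varphi_{f,h}=a+c$ and concluding $a\in AP(G)\cap W_0(G)=\bset{0}$ is airtight.

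The genuine gap is exactly the step you flagged: placing $\varphi_{f,h}$ in $\overline{B(G)}$ when $B(G)$ is, as in Definition \ref{mwm}, the span of coefficients of \emph{irreducible} representations. Your Cauchy--Schwarz/dominated-convergence estimate only controls replacing $\bar f,\bar h$ by simple vector fields \emph{inside a fixed direct integral}: the resulting approximants are still continuous superpositions $\int_Z\binr{\pi(\zeta)(g)\bar f_n(\zeta)}{\bar h_n(\zeta)}d\mu(\zeta)$ over a continuum of pairwise inequivalent irreducibles, not finite sums, and the remaining passage (freezing $\pi(\zeta)$ at sample points $\zeta_j$ on each cell) admits no uniform-in-$g$ control, since $\zeta\mapsto\pi(\zeta)$ is merely measurable. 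Indeed the membership claim is false in general: take $G=\R$, $\H=L_2[0,1]$, $(\pi(g)\xi)(\zeta)=e^{ig\zeta}\xi(\zeta)$, and $f=h=\indic{[0,1]}$. Then $\varphi_{f,h}(g)=\int_0^1e^{ig\zeta}\,d\zeta$ is a nonzero element of $C_0(\R)$, whereas $\overline{B(\R)}=AP(\R)$ (uniform closure of trigonometric polynomials); since every $a\in AP(\R)$ satisfies $\limsup_{g\rightarrow\infty}\abs{a(g)}=\norm{a}{\infty}$, one gets $\norm{\varphi_{f,h}-a}{\infty}\ge\norm{\varphi_{f,h}}{\infty}/2$ for all such $a$, so $\varphi_{f,h}\nin\overline{B(\R)}$. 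Your step 3 never invokes the m.w.m.\ hypothesis, so it cannot possibly be valid as an unconditional statement about all locally compact Polish groups.

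The repair is to delete step 3 rather than fix it. In Bergelson \& Rosenblatt the hypothesis is effectively formulated on the space that automatically contains all matrix coefficients: either one reads $B(G)$ as the Fourier--Stieltjes algebra (coefficients of \emph{arbitrary} continuous unitary representations, equivalently the span of continuous positive definite functions), in which case $\varphi_{f,h}\in B(G)$ by definition, or one takes the m.w.m.\ condition on $WAP(G)$ itself, in which case the Example following Definition \ref{APandWAPDef} already gives $\varphi_{f,h}\in WAP(G)$. Under either reading your closing paragraph completes the proof verbatim, with no direct integrals needed; the paper's restriction to irreducible $\pi$ in Definition \ref{mwm} is the source of the difficulty, not something your approximation scheme can overcome.
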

\begin{examp}
According to theorem \ref{weakIsStrongThm} and proposition \ref{mwkIsRGuyProp}, every minimally weakly mixing group $G$ will satisfy the conditions of theorem \ref{corollaryThm}. We conclude that theorem \ref{corollaryThm} can be applied to every minimally weakly mixing group $G$.
\end{examp}
\subsubsection{Poisson actions}
\begin{defn}
A quadruple $(X,\B,m,T)$ is called a {\bf non-singular endomorphism} if $\sys$ is a standard probability space, and $T:X_0\rightarrow X_0$ is a measurable non-singular transformation of $X_0\in\B$ such that $m(X_0) = 1$. Note that $T$ is not necessarily invertible.\\
Given a standard space $\sys$ we will denote by $End(X)$ the collection of non-singular endomorphisms of $X$.
\end{defn}
\begin{defn}\label{invFactorDef}
Let $(X,\B,m,T)$ be a non-singular endomorphism, $G$ a locally compact Polish group, $f:X\rightarrow G$ a measurable function, and $\mathbb P\sim m\times m_G$, where  $m_G$ is the Haar measure defined on $G$. Denote by $T_f$ the transformation defined by $T_f(x,g)=(Tx,f(x)g)$. {\bf The invariant factor} of $(X\times G,\B(X\times G),\mathbb P,T_f)$, is a standard probability space $(\Omega,\mathcal F,P)$ equipped with a measurable map $\pi:X\times G\rightarrow\Omega$ such that:
\begin{enumerate}
\item $\mathbb P\circ\pi\inv=P$.
\item $\pi\circ T_f=\pi$.
\item $\pi\inv(\mathcal F)=\bset{A\in\B(X\times G);~T_f\inv A=A}$.
\end{enumerate}
Denote by $Q:G\rightarrow AUT(X\times G)$ the transformation defined by $Q_g(x,h)=(x,hg\inv)$, then $Q_g\circ T_f=T_f\circ Q_g$ and therefore there exists a $P$ non-singular endomorphism $\rho:\Omega\rightarrow\Omega$ such that $\pi\circ Q=\rho\circ\pi$.\\
The non-singular action $(\Omega,\mathcal F,P,\rho)$ is called {\bf the Poisson $G$-action associated to $(T,f)$} and denoted by $\rho(T,f)$.
\end{defn}
\begin{thm}[Aaronson \& Lemanczyk \cite{Aaronson2005}]\label{PoissonThm}
Let $G$ be a locally compact Polish group. If $p\in\mathcal P(G)$ is globally supported, then $\rho(T,f)$ is weakly mixing.
\end{thm}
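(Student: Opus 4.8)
The plan is to argue straight from the definition of weak mixing: for an arbitrary probability preserving ergodic action $S\colon G\to PPT(Y)$ on a standard probability space $\nsys$ I must show the product $\rho(T,f)\times S$ is ergodic. The first move is to transport an invariant function down to the skew product on $X\times G$. Given a $\rho\times S$-invariant $\Psi\in\Lp\infty{(\Omega\times Y)}$, set $\Phi(x,h,y)=\Psi(\pi(x,h),y)$. Since $\pi\circ T_f=\pi$ and $\pi\circ Q_g=\rho_g\circ\pi$, the function $\Phi$ obeys
\[
\Phi(Tx,f(x)h,y)=\Phi(x,h,y),\qquad \Phi(x,hg\inv,S_gy)=\Phi(x,h,y)\quad(g\in G).
\]
Putting $g=h$ in the second identity gives $\Phi(x,h,y)=\phi(x,S_hy)$ with $\phi(x,y):=\Phi(x,e,y)$; inserting this into the first identity and taking $h=e$ yields $\phi(Tx,S_{f(x)}y)=\phi(x,y)$. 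Hence $\phi$ is invariant under the skew product $\tilde T(x,y)=(Tx,S_{f(x)}y)$ on $X\times Y$, and conversely every $\tilde T$-invariant function lifts back to a $\rho\times S$-invariant function. This reduces the theorem to the statement: for every ergodic $S$ the skew product $\tilde T$ is ergodic.

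Next I separate scales in the $Y$-variable. Write $\phi(x,y)=c(x)+\phi_0(x,y)$ with $c(x)=\int_Y\phi(x,y)\,d\nu(y)$ and $\phi_0(x,\cdot)\in\Lp2{\nsys}_0$. Integrating the invariance of $\phi$ in $y$ and using that each $S_g$ preserves $\nu$ gives $c\circ T=c$, so $c$ is constant by ergodicity of $T$. Denoting by $W_g$ the Koopman operator of $S$ on $\Lp2{\nsys}_0$, the remaining relation reads $\phi_0(x,\cdot)=W_{f(x)}\phi_0(Tx,\cdot)$; unitarity of $W_g$ makes $\norm{\phi_0(x,\cdot)}{}$ a $T$-invariant function, hence a.e.\ equal to a constant $\kappa$. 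Everything now rests on proving $\kappa=0$.

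To reach $\kappa=0$ I split $\Lp2{\nsys}_0$ into the weakly mixing part of $W$ and the orthogonal sum of its finite dimensional $S$-invariant subspaces. On the weakly mixing part the averaged operator $P=\int_G W_g\,dp(g)$, where $p$ is the law of $f$, should be strictly contracting once the values $f(T^nx)$ decorrelate from the partial products $f(x)f(Tx)\cdots f(T^{n-1}x)$; together with the fact that $p$ charges every open set, this forces the weakly mixing component of $\phi_0$ to vanish. On a finite dimensional $S$-invariant subspace carrying an irreducible representation $\pi$ of $G$, the relation becomes a $U(\dim\pi)$-valued cocycle equation $\phi_0(x,\cdot)=\pi(f(x))\phi_0(Tx,\cdot)$ with $\norm{\phi_0(x,\cdot)}{}$ constant, exhibiting $\pi\circ f$ as a coboundary carrying a nonzero invariant section. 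Because $p$ is globally supported, the essential values of the cocycle $f$ exhaust a dense subgroup and therefore all of $G$, so an invariant section can exist only when $\pi$ is the trivial representation; but then $\phi_0$ lies among the $S$-invariant constants and is $0$ in $\Lp2{\nsys}_0$. Thus $\kappa=0$, $\phi$ is constant, $\tilde T$ is ergodic, and $\rho(T,f)$ is weakly mixing.

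The main obstacle is this last step. The weakly mixing component is soft once decorrelation is available, but the almost periodic component requires the genuinely cohomological statement that global support of $p$ prevents $\pi\circ f$ from admitting an invariant section for any non-trivial finite dimensional $\pi$ — equivalently, that the skew product has no non-trivial Banach--Kronecker factor. This is precisely where one must control the essential values of the $G$-valued cocycle $f$ and use the hypotheses on the base endomorphism $(T,f)$; I would lean here on Schmidt's cocycle theory and the Aaronson--Lemanczyk analysis rather than on a purely spectral argument, and this is where the global support hypothesis does its real work.
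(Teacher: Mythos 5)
The paper offers no proof of this theorem at all: it is quoted from \cite{Aaronson2005} and used as a black box (only the corollary following it is proved internally), so your attempt can only be measured against that source. Your opening reduction is correct and is in fact the same first move as in Aaronson--Lemanczyk: weak mixing of $\rho(T,f)$ is equivalent to ergodicity, for every ergodic probability preserving $S$, of the Rokhlin skew product $\tilde T(x,y)=(Tx,S_{f(x)}y)$. Two blemishes there are repairable but should be flagged: the substitution $g=h$ is illegitimate as written, because the identity $\Phi(x,hg\inv,S_gy)=\Phi(x,h,y)$ holds for each \emph{fixed} $g$ only almost everywhere and the diagonal is null in $G\times G$ --- one must pass by Fubini to the a.e.\ identity $\Phi(x,w,y)=\Phi(x,u,S_{u\inv w}y)$ and freeze a generic $u$; and both "$c\circ T=c$ hence $c$ constant" and "$\norm{\phi_0(x,\cdot)}{}$ is $T$-invariant, hence constant" invoke ergodicity of $T$, which is not among the quoted hypotheses (the statement, as transcribed in the paper, does not even say how $p$ is related to $(T,f)$).

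The genuine gap is your step $\kappa=0$: both branches rest on claims that are not proved and, read literally from the stated hypotheses, are false. Concretely, let $T$ be exact (say the one-sided doubling map), $h:X\rightarrow G$ measurable, and $f=(h\circ T)\cdot h\inv$ a coboundary whose one-dimensional law $p$ is globally supported (easy to arrange for $G=\R$). Then $(x,g)\mapsto(x,h(x)\inv g)$ conjugates $T_f$ to $T\times\mathrm{id}_G$, so $\rho(T,f)$ contains the translation action of $G$ on itself and is certainly not weakly mixing --- yet $p$ is globally supported. This kills both of your soft steps at once: the "decorrelation of $f(T^nx)$ from the partial products" you postulate on the weakly mixing component, and the claim that global support of $p$ forces the essential values of the cocycle to exhaust $G$, are properties of the \emph{joint} system $(T,f)$, never of the marginal law $p$ alone. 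The theorem is really about the random-walk model ($X=G^{\mathbb N}$, $m=p^{\otimes\mathbb N}$, $T$ the shift, $f$ the first coordinate), where $T$ is exact and the increments are i.i.d., independent of $f_n=f\cdot(f\circ T)\cdots(f\circ T^{n-1})$; there the vanishing argument runs through martingale convergence along $T^{-n}\B$ (exactness, not mere ergodicity) together with strict convexity of the Hilbert ball: if $\norm{\int_G W_gv\,dp(g)}{}=\norm{v}{}$ then $W_gv=v$ for all $g$ in $\mathrm{supp}(p)=G$, impossible for $0\neq v\in\Lp2{\nsys}_0$ with $S$ ergodic. Your closing paragraph effectively concedes this by deferring to "Schmidt's cocycle theory and the Aaronson--Lemanczyk analysis": the deferred content is precisely the theorem, so what you have is a correct reduction plus an accurate map of where the difficulty lives, not a proof.
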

\begin{cor}\label{PoissonCor}
Let $G$ be a locally compact Polish group. Then its Poisson action has no finite dimensional $L_\infty$ invariant subspaces.
\end{cor}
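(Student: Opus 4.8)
The plan is to obtain this as an immediate consequence of two previously established facts: the weak mixing of the Poisson action, and the obstruction that a non-trivial finite dimensional invariant $\Lp\infty{\sys}$ subspace poses to weak mixing. The whole argument is a single application of the contrapositive of Remark \ref{noNeedFiniteRmk}.

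First I would invoke Theorem \ref{PoissonThm}: under its hypotheses (in particular, that the governing measure $p\in\mathcal P(G)$ is globally supported), the associated Poisson action $\rho(T,f)$ is weakly mixing. By its very construction in Definition \ref{invFactorDef}, $\rho(T,f)$ is a non-singular action of the locally compact Polish group $G$ on a standard probability space, so it is a legitimate input to the earlier structural results. In particular it is ergodic, since any weakly mixing action is ergodic (form the product with the trivial one-point probability preserving action, which is ergodic and returns the action itself up to isomorphism).

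Next I would apply Remark \ref{noNeedFiniteRmk}, which states --- with no Moore hypothesis on $G$ --- that any non-singular action of a locally compact Polish group possessing a non-trivial finite dimensional $G$-invariant subspace of $\Lp\infty{\sys}$ cannot be weakly mixing. Read in contrapositive form, a weakly mixing non-singular action admits no such subspace. Applying this to $\rho(T,f)$, which we have just seen is weakly mixing, yields precisely that $\rho(T,f)$ has no non-trivial finite dimensional $\Lp\infty{\sys}$ invariant subspace, which is the assertion of the corollary.

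There is essentially no technical obstacle here; the entire content sits in the cited results, and the only care needed is interpretive. The phrase "no finite dimensional invariant subspace" must be understood in the non-trivial sense, since the constant functions always furnish a one-dimensional invariant subspace. One must also confirm that Remark \ref{noNeedFiniteRmk} is genuinely applicable, and this is exactly what the two observations above secure: $\rho(T,f)$ is a non-singular --- indeed weakly mixing, hence ergodic --- action of $G$ on a standard probability space.
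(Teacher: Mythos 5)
Your proposal is correct and follows essentially the same route as the paper: Theorem \ref{PoissonThm} supplies weak mixing of the Poisson action, and the contrapositive of the finite-dimensional invariant $L_\infty$-subspace obstruction then gives the conclusion. If anything, your citation of Remark \ref{noNeedFiniteRmk} is slightly more careful than the paper's own appeal to Theorem \ref{grandThm}, since the latter is stated for Moore groups while the direction needed here holds for all locally compact Polish groups, exactly as the remark observes.
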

\begin{proof}
Let $\rho(G,p)$ be a Poisson action of a locally compact group $G$. By theorem \ref{PoissonThm}, it is weakly mixing, and according to theorem \ref{grandThm} it had no finite dimensional $L_\infty$ subspaces.
\end{proof}
\subsection{Examples}
In this section we will present some characterizations of Moore groups. In addition, we will see some examples.
\begin{defn}
A group $G$ is called a {\bf $Z$ group} if the group $G/Z(G)$ is compact, where $Z(G)$ is the centre of $G$.
\end{defn}
\begin{thm}[Grosser \& Moskowitz \cite{Grosser1967} theorem 2.1]\label{ZgroupThm}
Let $G$ be a $Z$-group, then any irreducible unitary representation is finite dimensional.
\end{thm}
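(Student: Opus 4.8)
The plan is to combine Schur's lemma with an averaging argument over the compact quotient. Let $\pi$ be an irreducible unitary representation of $G$ on a Hilbert space $\H$. Since $Z(G)$ is central, every $\pi(z)$ with $z\in Z(G)$ commutes with all of $\pi(G)$, so by Schur's lemma $\pi(z)=\chi(z)\cdot Id$ for some continuous homomorphism $\chi:Z(G)\to S^1$ (unimodular because $\pi(z)$ is unitary). Fix a unit vector $v\in\H$ and let $P$ be the orthogonal projection onto $\comp v$. The first observation I would record is that the operator $\pi(g)P\pi(g)\dual$ depends only on the coset $gZ(G)$: replacing $g$ by $gz$ scales $\pi(g)$ by the unimodular scalar $\chi(z)$, and the two factors $\chi(z),\overline{\chi(z)}$ cancel. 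Thus $\dot g\mapsto\pi(g)P\pi(g)\dual$ is a well-defined, strongly continuous operator field on the compact Hausdorff group $G/Z(G)$ (recall $Z(G)$ is closed and normal).

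Next I would form the averaged operator
$$
A=\int_{G/Z(G)}\pi(g)P\pi(g)\dual\,d\mu(\dot g),
$$
where $\mu$ is the normalized Haar probability measure on $G/Z(G)$ and the integral is taken weakly, via $\binr{Ax}{y}=\int_{G/Z(G)}\binr{\pi(g)P\pi(g)\dual x}{y}\,d\mu(\dot g)$. Since each integrand is a positive operator of norm $1$, $A$ is a positive operator with $\norm A{}\le 1$. The crucial structural step is that $A$ intertwines $\pi$: for every $h\in G$ the substitution $g\mapsto hg$ together with the left-invariance of $\mu$ gives $\pi(h)A\pi(h)\dual=A$, hence $\pi(h)A=A\pi(h)$. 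By irreducibility and Schur's lemma, $A=c\cdot Id$ for some scalar $c\ge 0$.

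It then remains to evaluate $c$ in two ways. On the one hand, $\binr{Av}{v}=\int_{G/Z(G)}\abs{\binr{\pi(g)\dual v}{v}}^2\,d\mu(\dot g)$, and since the integrand is continuous and equals $1$ at the identity coset, this integral is strictly positive, so $c>0$. On the other hand, choosing any orthonormal basis $\{e_i\}$ of $\H$ and using Tonelli's theorem (all terms are non-negative) to interchange the sum and the integral,
$$
\operatorname{Tr}(A)=\sum_i\int_{G/Z(G)}\abs{\binr{e_i}{\pi(g)v}}^2\,d\mu(\dot g)=\int_{G/Z(G)}\norm{\pi(g)v}{}^2\,d\mu(\dot g)=1,
$$
because $\norm{\pi(g)v}{}=\norm v{}=1$ and $\mu(G/Z(G))=1$. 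Comparing with $\operatorname{Tr}(c\cdot Id)=c\cdot\dim\H$ yields $c\cdot\dim\H=1$ with $c>0$, forcing $\dim\H=1/c<\infty$, which is the desired conclusion.

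The main obstacle is not a single deep inequality but the careful justification that $A$ is a legitimate object: that the integrand genuinely descends to $G/Z(G)$ (precisely where centrality and $\abs{\chi}=1$ are used), that the weak operator integral defines a bounded operator whose commutation with $\pi(G)$ lets Schur's lemma apply, and that the trace identity holds with the sum--integral interchange justified. Once these routine but essential points are in place, the contradiction between $\operatorname{Tr}(A)=1$ and $A=c\cdot Id$ with $c>0$ on an infinite-dimensional space gives finite dimensionality at once.
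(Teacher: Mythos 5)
The paper offers no proof of this statement at all: it is quoted verbatim from Grosser \& Moskowitz \cite{Grosser1967} as a black box, so there is no internal argument to compare yours against. Your proof is correct and self-contained, and it is the natural one: it adapts the classical compact-group argument (average a rank-one projection to get a nonzero scalar intertwiner of finite trace) to the case of cocompact center, with Schur's lemma used twice --- once to produce the central character $\chi$ that makes $\pi(g)P\pi(g)\dual$ descend to $G/Z(G)$, and once to identify the averaged operator $A$ with $c\cdot Id$. All the delicate points you flag do go through: the weak integral of a strongly continuous, uniformly bounded field of positive operators over a compact group is a bounded positive operator; left-invariance of Haar measure gives $\pi(h)A\pi(h)\dual=A$; positivity of $c$ follows because the continuous integrand $\abs{\binr{\pi(g)\dual v}{v}}^2$ equals $1$ at the identity coset and Haar measure charges every nonempty open set (worth stating explicitly); and the trace identity is legitimate because the trace of a \emph{positive} operator, valued in $[0,\infty]$, is basis-independent, with the sum--integral interchange justified by Tonelli and Parseval. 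Note also that you need even less than the equality $\operatorname{Tr}(A)=1$: the one-sided bound $\operatorname{Tr}(A)\le 1$ (which follows from $\int\sum_{i\in F}\abs{\binr{e_i}{\pi(g)v}}^2d\mu\le 1$ over finite subsets $F$) already combines with $A=c\cdot Id$, $c>0$, to force $\dim\H\le 1/c<\infty$, which insulates the argument from any worry about non-separable $\H$. This is, in spirit, the same mechanism as the Grosser--Moskowitz proof, so your writeup could serve as a proof of the cited theorem within the paper.
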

\begin{defn}\label{lieDef}
Let $G$ be a connected Lie group, then $G$ is called a {\bf Lie group by definition}. Let $G$ be a group, which is not connected. We will call $G$ a {\bf Lie group by definition} if its component of identity $G_0$ is open in $G$, and if $G_0$ is a Lie group by definition.
\end{defn}
\begin{thm}[Moore \cite{Moore1972}]\label{LieThm}
Let $G$ be a Lie group, then every irreducible unitary representation of $G$ is finite dimensional if and only if there exists an open subgroup of finite index $H$, which is a $Z$-group.
\end{thm}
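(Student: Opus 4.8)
The plan is to prove the two implications separately; the reverse implication (Moore $\Rightarrow$ existence of a finite-index $Z$-group) carries essentially all of the difficulty. \textbf{The easy direction.} Suppose $H\le G$ is open, of finite index, and a $Z$-group. By Theorem \ref{ZgroupThm} every irreducible unitary representation of $H$ is finite dimensional, so $H$ is a Moore group. Given an irreducible unitary representation $\pi$ of $G$ on a Hilbert space $\H$, I would restrict it to $H$. Because $[G:H]<\infty$, Mackey's subgroup analysis together with Frobenius reciprocity shows that $\pi$ embeds as a subrepresentation of an induced representation $\mathrm{Ind}_H^G\sigma$ for some irreducible $\sigma$ of $H$. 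Since $\sigma$ is finite dimensional and $\dim\mathrm{Ind}_H^G\sigma=[G:H]\cdot\dim\sigma<\infty$, the representation $\pi$ is finite dimensional, so $G$ is Moore. This direction uses only the cited Grosser--Moskowitz theorem together with routine bookkeeping for induced representations.

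\textbf{The hard direction.} Assume now that $G$ is Moore. First I would reduce to the connected case: the identity component $G_0$ is open by the definition of a Lie group, and a connected Lie group has no proper open subgroups, so for connected $G$ the assertion reduces to showing that $G$ is itself a $Z$-group, i.e. that $G/Z(G)$ is compact. The discrete component group $G/G_0$ is handled by the discrete analogue of the statement, namely that a discrete Moore group admits a finite-index abelian subgroup, which supplies the required finite index. For connected $G$ I would invoke the Levi decomposition, writing $G$ as a semidirect product of its solvable radical $R$ and a semisimple factor $S$. A connected non-compact semisimple Lie group admits infinite dimensional irreducible unitary representations (Harish-Chandra), and since the Moore property descends to closed subgroups and quotients, $S$ must be compact.

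\textbf{The solvable radical.} The core of the argument is the analysis of $R$ through the Kirillov orbit method: irreducible unitary representations of a simply connected nilpotent Lie group are parametrized by coadjoint orbits in $\mathfrak g^\star$, with the dimension of a representation governed by the dimension of its orbit. Finite dimensionality of every irreducible therefore forces every coadjoint orbit to be a single point; equivalently the coadjoint action is trivial, that is $[\mathfrak g,\mathfrak g]$ lies in the center, which is exactly the infinitesimal form of the condition that $G/Z(G)$ be compact. An induction on $\dim G$, peeling off one-dimensional central subgroups and using that the Moore property is inherited by subgroups and quotients, would extend this from the nilpotent case to the full solvable radical and, combined with the compactness of $S$, would yield that $\overline{\mathrm{Ad}(G)}$ is compact, i.e. that $G$ is a $Z$-group.

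\textbf{The main obstacle.} The delicate part is precisely this last analysis of the solvable radical: converting the purely representation-theoretic hypothesis ``every irreducible is finite dimensional'' into the sharp structural conclusion that the center is cocompact. The orbit method supplies the correct heuristic, but making the induction rigorous -- tracking how the Moore property behaves under the group extensions involved, and moving carefully between the group, its Lie algebra, and its coadjoint orbits -- is where Moore's original argument concentrates its effort, and I expect the bulk of the work to lie there.
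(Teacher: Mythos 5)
First, a point of comparison: the paper offers no proof of this theorem at all --- it is quoted verbatim from Moore's paper \cite{Moore1972} as a black box (just as Theorem \ref{ZgroupThm} is quoted from Grosser--Moskowitz), so your proposal can only be measured against the literature, not against an internal argument. Your easy direction is essentially sound: with $[G:H]<\infty$, induction and restriction are two-sided adjoints, and one only needs the standard extra remark that $\pi|_H$ contains an irreducible subrepresentation of $H$ before invoking $\dim\mathrm{Ind}_H^G\sigma=[G:H]\cdot\dim\sigma<\infty$. The hard direction, however, contains a genuine mathematical error at exactly the step you flag as the core. For a simply connected nilpotent group, ``every coadjoint orbit is a point'' means $f([X,Y])=0$ for every $f\in\mathfrak g^{\star}$ and all $X,Y\in\mathfrak g$, which forces $[\mathfrak g,\mathfrak g]=0$, i.e.\ $\mathfrak g$ \emph{abelian} --- not merely ``$[\mathfrak g,\mathfrak g]$ lies in the center.'' Worse, the condition $[\mathfrak g,\mathfrak g]\subseteq\mathfrak z(\mathfrak g)$ is emphatically \emph{not} ``the infinitesimal form of $G/Z(G)$ compact'': the Heisenberg group $H_3(\R)$ --- the central example of this very paper (Definition \ref{HeisenbergDefn}) --- is $2$-step nilpotent with central commutator, yet $H_3(\R)/Z(H_3(\R))\cong\R^2$ is noncompact and $H_3(\R)$ carries infinite-dimensional irreducibles. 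Cocompactness of the center is a global condition with no purely Lie-algebraic formulation, so the inductive mechanism you describe cannot, as stated, convert the Moore hypothesis into the $Z$-group conclusion; this is precisely the content of Moore's argument, and your sketch replaces it with an equivalence that is false.

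Two further soft spots. Your closing identification ``$\overline{\mathrm{Ad}(G)}$ compact, i.e.\ $G$ is a $Z$-group'' conflates two distinct classes: for connected $G$ one has $\ker\mathrm{Ad}=Z(G)$, but compactness of the closure of $\mathrm{Ad}(G)$ in $GL(\mathfrak g)$ is strictly weaker than compactness of $G/Z(G)$ in its own topology (this is the difference between the classes usually denoted $[Z]$ and $\overline{[\mathrm{Ad}]}$-compact, and keeping them apart is part of the Grosser--Moskowitz/Moore structure theory). Second, the gluing step is waved at: knowing that $G_0$ is a $Z$-group and that $G/G_0$ has a finite-index abelian subgroup (Thoma's theorem for discrete groups) does not by itself produce an open finite-index subgroup $H\le G$ with $H/Z(H)$ compact, because the component group acts on $G_0$ and the center of $H$ need not contain the center of $G_0$; controlling this extension is a substantial part of Moore's proof. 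The architecture of your sketch (component group, compact Levi factor via Harish-Chandra, induction on the radical) is the right skeleton, but the two bridges that carry the load --- the infinitesimal criterion and the extension argument --- are respectively wrong and missing.
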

\begin{defn}\label{projLimDef}
We say $G$ is a {\bf projective limit} and denote it $G = projlim(G_\alpha)$ if there is a family of normal subgroups $\bset{H_\alpha}_{\alpha\in I}$ directed by inclusion such that $G_\alpha = G/H_\alpha$ and $\underset{\alpha\in I}\bigcap H_\alpha=\bset e$, where $H_\alpha$ is compact for each $\alpha\in I$.
\end{defn}
\begin{thm}[Moore \cite{Moore1972}]
For a locally compact group $G$, every irreducible unitary representation is finite dimensional if and only if $G=projlim G_\alpha$ where each $G_\alpha$ is a Lie group, which satisfies that every irreducible unitary representation is finite dimensional.
\end{thm}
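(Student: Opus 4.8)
The statement is a structure theorem of Moore, and I would attack the two implications quite differently: the direction "projective limit of Moore Lie groups $\Rightarrow$ Moore" is elementary and I would carry it out in full, while the converse is where the deep structure theory hides.

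The plan for the backward direction is a direct averaging argument. Let $\pi$ be an irreducible unitary representation of $G$ on a Hilbert space $\mathcal H$, and for each $\alpha$ set $P_\alpha=\int_{H_\alpha}\pi(k)\,dk$, the orthogonal projection onto the space $\mathcal H^{H_\alpha}$ of $H_\alpha$-fixed vectors, where $dk$ is normalized Haar measure on the compact group $H_\alpha$. Because $H_\alpha$ is normal and Haar measure on a compact group is invariant under every automorphism, one checks $\pi(g)P_\alpha\pi(g)^{-1}=P_\alpha$ for all $g$, so $P_\alpha$ lies in the commutant $\pi(G)'=\mathbb C\cdot\mathrm{Id}$ (Schur's lemma), forcing $P_\alpha\in\{0,\mathrm{Id}\}$. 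Since the $H_\alpha$ are compact, directed by inclusion, and satisfy $\bigcap_\alpha H_\alpha=\{e\}$, a standard compactness argument shows every neighborhood of $e$ contains some $H_\alpha$; continuity of $\pi$ then yields $\|P_\alpha v-v\|\le\sup_{k\in H_\alpha}\|\pi(k)v-v\|\to 0$, i.e.\ $P_\alpha\to\mathrm{Id}$ strongly. As the only values available are $0$ and $\mathrm{Id}$, eventually $P_\alpha=\mathrm{Id}$, meaning $\pi$ is trivial on $H_\alpha$ and factors through $G_\alpha=G/H_\alpha$. This $G_\alpha$ is a Moore Lie group by hypothesis, so the factored representation is finite dimensional, whence $\dim\mathcal H<\infty$.

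For the forward direction I would first dispose of the easy half: the quotients will automatically be Moore. If $K\trianglelefteq G$ is any closed normal subgroup, every irreducible unitary representation $\sigma$ of $G/K$ pulls back along the quotient map $q$ to an irreducible representation $\sigma\circ q$ of $G$, which is finite dimensional since $G$ is Moore; hence $\sigma$ is finite dimensional and $G/K$ is again Moore. Thus the entire content of the forward direction is the purely structural claim that a locally compact Moore group is a projective limit of Lie groups, i.e.\ that every neighborhood of the identity contains a compact normal subgroup $H$ with $G/H$ a Lie group. Granting this, choosing a cofinal family of such $H_\alpha$ and setting $G_\alpha=G/H_\alpha$ exhibits $G$ as the desired projective limit, each $G_\alpha$ being Lie by construction and Moore by the pull-back remark — and a Lie group being Moore is exactly the situation of Theorem \ref{LieThm}.

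The hard part is precisely this structural claim, and it is where the deep input enters. The plan is to combine the solution of Hilbert's fifth problem with the conjugation structure forced by the Moore hypothesis: the identity component $G_0$ is connected locally compact, hence (Montgomery--Zippin) a projective limit of connected Lie groups, while van Dantzig's theorem supplies a basis of compact open subgroups for the totally disconnected quotient $G/G_0$. The real obstruction is upgrading these small subgroups to \emph{normal} ones with Lie quotient — the failure mode exhibited by non--pro-Lie groups such as $SL_2(\mathbb Q_p)$, which is exactly what the finite-dimensionality of all irreducible representations forbids. The point to establish is that a Moore group has relatively compact conjugacy classes (it is of $[FC]^-$/$[SIN]$ type), and that for such groups the compact subgroups above can be averaged over conjugation into arbitrarily small compact \emph{normal} subgroups with Lie quotients. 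Proving that "Moore" implies this conjugation-boundedness, and that the property forces the pro-Lie structure, is the technical heart of the theorem; I would invoke the structure theory of such groups (Grosser--Moskowitz, Moore \cite{Moore1972}) rather than reprove it here.
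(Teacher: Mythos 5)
The paper supplies no proof of this theorem at all: it is quoted from Moore \cite{Moore1972} as a black box, so there is no internal argument to compare against, and your proposal does strictly more than the paper does. Your backward direction is complete and correct relative to the paper's definition \ref{projLimDef} of projective limit: $P_\alpha=\int_{H_\alpha}\pi(k)\,dk$ is indeed the orthogonal projection onto the $H_\alpha$-fixed vectors; normality of $H_\alpha$ together with invariance of normalized Haar measure under automorphisms of a compact group puts $P_\alpha$ into the commutant, and Schur's lemma forces $P_\alpha\in\{0,\mathrm{Id}\}$; your compactness claim is also sound, since for a fixed $\alpha_0$ the sets $H_\alpha\cap U^c$, $H_\alpha\subseteq H_{\alpha_0}$, are closed in the compact set $H_{\alpha_0}$ and directed, so if all were nonempty their total intersection would be nonempty, contradicting $\bigcap_\alpha H_\alpha=\{e\}\subseteq U$; strong continuity then rules out $P_\alpha=0$ for small $H_\alpha$ (take a unit vector $v$ and note $\|P_\alpha v-v\|<1$ is incompatible with $P_\alpha=0$), so $\pi$ kills some $H_\alpha$, factors continuously through the Lie Moore quotient $G_\alpha$, and is finite dimensional. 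The pull-back observation that quotients of Moore groups are Moore is likewise correct, and it reduces the forward direction to the structural claim that a Moore group has arbitrarily small compact normal subgroups with Lie quotients.

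Be clear-eyed about what remains: that structural claim \emph{is} the theorem — the reduction via Montgomery--Zippin, van Dantzig, and the $[SIN]$/relatively-compact-conjugacy-class property of Moore groups is where all the depth sits, and you invoke it rather than prove it. So as a self-contained proof the forward implication is a reduction plus a citation. That said, this matches the paper's own standard exactly (the paper cites Moore for the entire statement), your division into an elementary half and a cited structural core is the honest one, and your identification of $SL_2(\mathbb Q_p)$ as the failure mode the Moore hypothesis must exclude shows you have located the genuine obstruction correctly.
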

\begin{examp}
Every finite group is a Moore group, therefore for every finite group action, theorem \ref{grandThm} holds.
\end{examp}
\begin{examp}
Every abelian group is a Moore group, therefore for every abelian group action, theorem \ref{grandThm} holds.
\end{examp}
\begin{thm}[Peter Weyl theorem]\label{peterWeylThm}
Let $G$ be a compact group, then for every unitary representation $\pi$ there exists a decomposition into finite-dimensional irreducible representations.
\end{thm}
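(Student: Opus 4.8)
The plan is to reduce the whole statement to a single structural lemma: \emph{every nonzero unitary representation of a compact group admits a nonzero finite-dimensional invariant subspace.} Granting this lemma, the full decomposition follows from a routine exhaustion argument, so I would invest almost all of the work in the lemma and dispatch the assembly at the end.

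To prove the lemma, let $\pi$ act on $\H$, fix a unit vector $\xi\in\H$, and let $P$ be the rank-one orthogonal projection onto $\comp\xi$, so that $P\eta=\binr\eta\xi\xi$. The central idea is to \emph{average} $P$ over the group against the normalized Haar measure $dg$, setting
$$
T=\int_G \pi(g)P\pi(g)\inv\,dg,
$$
the integral being taken in the weak operator topology. I would then verify the easy algebraic properties. Each conjugate $\pi(g)P\pi(g)\inv=\pi(g)P\pi(g)\dual$ is positive and self-adjoint because $P$ is and $\pi(g)$ is unitary, so $T$ is positive and self-adjoint; $T$ is nonzero because $\binr{T\xi}\xi=\int_G\abs{\binr{\pi(g)\inv\xi}\xi}^2dg$ has a continuous, nonnegative integrand equal to $1$ at $g=e$; and $T$ commutes with every $\pi(h)$ by the left-invariance of Haar measure, via the substitution $g\mapsto hg$ inside the integral.

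The hard part, and the genuine content of Peter--Weyl, is showing that $T$ is \emph{compact}. The argument I have in mind exploits that $P$ is Hilbert--Schmidt (indeed rank one), that the Hilbert--Schmidt norm is unitarily invariant so $\norm{\pi(g)P\pi(g)\inv}{HS}=\norm P{HS}$ for every $g$, and that $g\mapsto\pi(g)P\pi(g)\inv$ is continuous into the Banach space of Hilbert--Schmidt operators (here I use the strong continuity of the representation, noting that $\pi(g)P\pi(g)\inv$ is the rank-one projection onto $\comp\,\pi(g)\xi$). Since $G$ is compact and hence of finite Haar measure, the Bochner integral defining $T$ therefore converges in the Hilbert--Schmidt norm, which forces $T$ itself to be Hilbert--Schmidt, and in particular compact. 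With $T$ a nonzero positive compact self-adjoint operator, the spectral theorem for compact operators supplies a strictly positive eigenvalue $\lambda$ whose eigenspace $\ker(T-\lambda I)$ is finite dimensional; because $T$ commutes with $\pi$, this eigenspace is $\pi$-invariant, which proves the lemma.

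Finally, to obtain the global decomposition I would apply Zorn's lemma to the family of collections of pairwise-orthogonal finite-dimensional irreducible invariant subspaces. A maximal such collection spans a closed invariant subspace $W$; if $W\neq\H$, then $W\ort$ is a nonzero invariant subspace (as $\pi$ is unitary), the lemma produces a finite-dimensional invariant subspace inside $W\ort$, and any finite-dimensional representation splits into irreducibles by an elementary induction on dimension, contradicting maximality. Hence $W=\H$, and $\H$ is the orthogonal direct sum of finite-dimensional irreducible subrepresentations, which is the asserted decomposition.
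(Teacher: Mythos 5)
Your proof is correct, but note that the paper itself offers no proof of this statement: it quotes Peter--Weyl as a classical background theorem (with the sole purpose of concluding that compact groups are Moore groups), so there is no internal argument to compare yours against. What you have written is the standard averaging proof, and all of its steps check out. The reduction to the lemma that every nonzero unitary representation of a compact group has a nonzero finite-dimensional invariant subspace is the right structural move, and your Zorn exhaustion at the end is routine and complete (including the needed remark that $W\ort$ is invariant by unitarity and that finite-dimensional unitary representations are completely reducible). The two places where care is genuinely required are handled correctly: the nonvanishing of $T$ rests on the continuity of $g\mapsto\abs{\binr{\pi(g)\inv\xi}{\xi}}^2$ together with the fact that Haar measure is strictly positive on nonempty open sets (you state the continuity; the Haar positivity is implicit but standard), and the compactness of $T$ via the Hilbert--Schmidt route works because the map $v\mapsto\binr{\cdot}{v}v$ is Lipschitz from the unit sphere into the Hilbert--Schmidt class, so strong continuity of $\pi$ gives continuity of $g\mapsto\pi(g)P\pi(g)\inv$ in HS norm, and the Bochner integral over the compact group then lands in the HS class (the image of $G$ is norm-compact, hence separably valued, so Bochner integrability is not an issue even when $G$ is not metrizable). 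If you wanted a marginally lighter variant, you could avoid Bochner integration by showing directly that $T$ maps the unit ball into a totally bounded set, but the HS argument as you give it is clean and fully rigorous.
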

\begin{examp}
By Peter-Weyl theorem, every compact group is a Moore group, therefore for every compact group action, theorem \ref{grandThm} holds.
\end{examp}
\begin{examp}[The infinite Dihedral group]
One of the many representation of this groups is the group generated by a reflection and a rotation. The reflection of $\R^2$ is a multiplication by the matrix 
$$r=
\begin{pmatrix}
			 -1 & 0\\
			 0 & 1
\end{pmatrix}
$$
, and the rotation is a multiplication by
$$s=
\begin{pmatrix}
			 1 & 1\\
			 0 & 1
\end{pmatrix}
$$
Note that 
$$
s^k=
\begin{pmatrix}
			 1 & k\\
			 0 & 1
\end{pmatrix}
$$where $k\in\Z$. This group is clearly infinite.\\
First we will show this group is a Moore group. Let $H=\left<s\right>$, then it is an abelian group. This sub-group is of index 2, since 
$$
s^k\cdot r=
\begin{pmatrix}
			 1 & k\\
			 0 & 1
\end{pmatrix}\cdot \begin{pmatrix}
			 -1 & 0\\
			 0 & 1
\end{pmatrix}
=\begin{pmatrix}
			 -1 & k\\
			 0 & 1
\end{pmatrix}
$$
$$
=\begin{pmatrix}
			 -1 & 0\\
			 0 & 1
\end{pmatrix}\cdot
\begin{pmatrix}
			 1 & -k\\
			 0 & 1
\end{pmatrix}=r\cdot s^{-k}
$$
In addition, if the infinite Dihedral group is endowed with the discrete topology, then $H$ is an open subgroup. By theorem \ref{LieThm}, $D_\infty$ is a Moore group.\\
Let us look at the action of $D_\infty$ on $(\R^2,\B,Leb)$. It can be defined as matrix multiplications, which is a non-singular action on $(\mathbb \R^2,\B,m)$, for some finite measure $m\sim Leb$.\\
Define the function $f:\R^2\rightarrow\R$ by 
$$
f(x,y)=\begin{cases}
		y & \abs y<1\\
		1 & \text{ otherwise }
	\end{cases}
$$
Then it is $D_\infty$ invariant since
$$
f\bb{s\begin{pmatrix}x\\y\end{pmatrix}}=f\bb{\begin{pmatrix}x+y\\y\end{pmatrix}}=f(x,y)
$$
$$
f\bb{r\begin{pmatrix}x\\y\end{pmatrix}}=f\bb{\begin{pmatrix}-x\\y\end{pmatrix}}=f(x,y)
$$
In addition $f\in\Lp\infty{\sys}$ therefore the space $L = span\bset f$ is finite dimensional, $D_\infty$ invariant and non-trivial. By theorem \ref{grandThm} the action in not weakly mixing.
\end{examp}
\begin{examp}[The quaternions group]
Denote the following vectors in $\R^4$:
$$
\indic{}:=	\begin{pmatrix}
		1\\0\\0\\0
		\end{pmatrix}\;\;\;
i:=	\begin{pmatrix}
		0\\1\\0\\0
		\end{pmatrix}\;\;\;
j:=	\begin{pmatrix}
		0\\0\\1\\0
		\end{pmatrix}\;\;\;
k:=	\begin{pmatrix}
		0\\0\\0\\1
		\end{pmatrix}\;\;\;
$$
Define the quaternion multiplication by:
$$
i^2=j^2=k^2=ijk=(-1)
$$
This multiplication can be extended to $H:=span\bset{\indic{},i,j,k}$.\\
Denote by $H^\star:=\bset{\begin{pmatrix}a\\b\\c\\d\end{pmatrix}\in H;~a^2+b^2+c^2+d^2\neq 0}$. Then $H^\star$ is a Lie group with quaternion multiplication. In addition, one of its representations is as complex $2\times 2$ matrices:
$$
\begin{pmatrix}
		a\\b\\c\\d
		\end{pmatrix}\mapsto
		\begin{pmatrix}
		a+ib & c+id\\-(c-id) & a-ib
		\end{pmatrix}
$$
where $i$ is the imaginary number $i=\sqrt{-1}$. We shall use this representation to show this group is a Moore group. Calculations show that-
$$
Z(H^\star)=\bset{\begin{pmatrix}
		r&0\\0&r
		\end{pmatrix}; r\in\R}
$$
$$
H^\star/Z(H^\star)=\bset{\begin{pmatrix}
		a+ib & c+id\\-(c-id) & a-ib
		\end{pmatrix};~a^2+b^2+c^2+d^2=1}
$$
$H^\star/Z(H^\star)$ is a continuous image of a closed bounded set in $\R^4$ and therefore a compact set. We conclude by theorem \ref{LieThm}, that $H^\star$ is a Moore group, and therefore theorem \ref{grandThm} is valid.\\
\end{examp}
\section{Mild mixing vs. weak mixing}
It is known , by definition, that every mildly mixing action is also weakly mixing. For along time it was not known whether these two concepts have a different meaning for non-commutative groups.\\
In this section we will present an example of a probability preserving action of a non-commutative group, which is not mildly mixing but it is weakly mixing. There were some earlier examples of this, but this example is of different nature. We will need some definitions in order to explain how it is different.
\begin{defn}
A group $G$ is called {\bf locally finite} if for every finite set $\bset {g_1,\dots,g_N}\subseteq G$ the group generated by the set is finite.
\end{defn}
\begin{defn}
{\bf The alternating group}, $A_n$, is the group of even permutations of a finite set $\bset {1,\dots,n}$. The alternating group $A_\infty$ is the group of even permutations of $\N$.
\end{defn}
An earlier example was presented by I.Samet in \cite{samet2009}. His example is of an action of the locally finite group, $A_\infty$. The example we will soon present, is of an infinite Lie group, which is not locally finite, and moreover it is uncountable.\\

\begin{defn}\label{HeisenbergDefn}
{\bf The Heisenberg group} $H_3(\R)$ is the group of $3\times3$ upper triangular matrices of the form
$$
M(a,b,c):=\begin{pmatrix}
1 &a &c\\
0 &1 &b\\
0 &0 &1
\end{pmatrix}
$$
This group's unitary irreducible representations are fully characterized. It is known that every unitary irreducible representation is one of the following representations (for more information see \cite{Kirillov2004}):
\begin{enumerate}
\item One dimensional representation on $\mathbb C$ of the form:
$$
\pi_{\alpha,\beta}(M(a,b,c))z:=e^{i\bb{\alpha a+\beta b}}z
$$
\item Infinite dimensional representation on $L_2(\R)$ of the form:
$$
[\pi_\gamma(M(a,b,c))f]\bb{t}:=e^{i\gamma\bb{c+bt}}f(t+a), \;\;\;\gamma\in\R\setminus\bset 0
$$
\end{enumerate}
\end{defn}

The first example of an action of the Heisenberg group, which is weakly mixing but not mildly mixing, was found by Danilenko in \cite{Danilenko2011}.
\begin{thm}[Danilenko \cite{Danilenko2011}]
There is a rigid weakly mixing rank-one action $T$ of $H_3(\R)$.
\end{thm}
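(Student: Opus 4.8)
The plan is to construct the action by a cutting-and-stacking (equivalently $(C,F)$-) construction adapted to the non-discrete group $H_3(\R)$, and then to verify rigidity and weak mixing separately, exploiting the explicit description of the irreducible representations given in Definition \ref{HeisenbergDefn}. First I would fix a F\o lner sequence of compact sets $F_n\subseteq H_3(\R)$ together with cutting parameters and spacer data, and use them to build an increasing sequence of Rokhlin castles whose union generates $\B$; the resulting measure preserving action $T$ is rank one by construction, and in particular ergodic. All of the freedom in the construction sits in the spacer and return data, which I would tune to force the two remaining properties.

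For rigidity I would arrange the return data so that there is a sequence $g_n\goesto n\infty\infty$ — most naturally central elements $g_n=M(0,0,c_n)$ with $\abs{c_n}\goesto n\infty\infty$ — along which each castle is almost invariant. This gives $m\bb{B\symdif T_{g_n}B}\goesto n\infty 0$ on the dense algebra of castle sets $B$, hence $U_{g_n}\goesto n\infty \mathrm{Id}$ strongly, so $T$ admits a non-trivial rigid set in the sense of Definition \ref{rigidSetDef}. By the Schmidt--Walters criterion (Theorem \ref{mildRigidThm}) an action with a non-trivial rigid factor is \emph{not} mildly mixing, which is exactly the property one wants to contrast with weak mixing. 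A convenient feature of $H_3(\R)$ is that a \emph{central} rigidity sequence is harmless for weak mixing: by Definition \ref{HeisenbergDefn} the characters $\pi_{\alpha,\beta}$ do not depend on the central coordinate $c$, so rigidity concentrated along the center cannot produce an eigenfunction.

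For weak mixing I would use Remark \ref{bergelsonCor}: $T$ is weakly mixing if and only if its reduced Koopman representation has no non-zero finite dimensional invariant subspace. Here the representation theory does the structural work. Any finite dimensional subrepresentation decomposes into finite dimensional irreducibles, and by Definition \ref{HeisenbergDefn} the only finite dimensional irreducible representations of $H_3(\R)$ are the one dimensional characters $\pi_{\alpha,\beta}$ (the Schr\"odinger representations $\pi_\gamma$ being infinite dimensional). Thus a finite dimensional invariant subspace would force a non-constant eigenfunction $f\in\Lp2\sys$ with $f\circ T_{M(a,b,c)}=e^{i(\alpha a+\beta b)}f$, i.e. an eigenvalue factoring through the abelianization $H_3(\R)/Z\cong\R^2$. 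It therefore suffices to show the rank-one action admits no such eigenfunction; this is the standard continuous-spectrum argument for rank-one systems, carried out on the combinatorial cutting-and-stacking data, whereby for any candidate pair $(\alpha,\beta)\neq(0,0)$ the approximate periods built into the castles are arranged to be asymptotically incompatible with the relation $f\circ T_g=e^{i(\alpha a+\beta b)}f$.

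The main obstacle is the simultaneous control of all three requirements over a non-commutative group. Classically, rigid weakly mixing rank-one actions of $\Z$ or of abelian groups are routine, and the tension between rigidity (which concentrates the spectrum) and weak mixing (which forbids eigenvalues) is resolved by a careful choice of spacers; the new difficulty is that the construction must respect the group law of $H_3(\R)$ — in particular the failure of $M(a,b,0)$ and $M(a',b',0)$ to commute and the way their commutator lands in the center — so that the castles built over the F\o lner sets $F_n$ remain consistent under the group action and the almost-invariance estimates survive the non-abelian bookkeeping. I expect the hardest step to be the spectral analysis eliminating the characters $\pi_{\alpha,\beta}$ while retaining the central rigidity sequence, since one must verify that the eigenvalue equation, read along the $\R^2$-directions of the castles, cannot be satisfied even though the center is acting almost trivially.
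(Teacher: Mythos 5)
You should first be aware that the paper contains no proof of this statement at all: it is quoted as background and attributed to Danilenko \cite{Danilenko2011}, and the paper's own contribution on the weak-versus-mild-mixing question is a genuinely different construction — the Gaussian action associated to the infinite dimensional representation $\pi_\gamma$, which obtains a weakly mixing non-mildly-mixing action of $H_3(\R)$ by soft representation-theoretic means (theorems \ref{petersonExtendedThm} and \ref{GaussianThm}) and avoids rank-one combinatorics entirely, at the price of not producing a \emph{rank-one} action. So the comparison here is between your sketch and Danilenko's paper, not anything internal to this one. Your outline does follow the broad shape of Danilenko's method (a $(C,F)$/cutting-and-stacking construction over F\o lner sets, rigidity along a central sequence, weak mixing forced combinatorially), and two of your structural observations are correct and well placed: by Remark \ref{bergelsonCor} and the classification in Definition \ref{HeisenbergDefn}, weak mixing reduces to excluding eigenfunctions for the characters $\pi_{\alpha,\beta}$, which factor through the abelianization $H_3(\R)/Z\cong\R^2$; and since these characters are trivial on the center, a central rigidity sequence $g_n=M(0,0,c_n)$ is compatible with (though it in no way implies) weak mixing.

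The gap is that everything which makes the theorem a theorem is deferred. You never specify the F\o lner sets $F_n$, the cutting sets, or the spacer data, and both load-bearing verifications are asserted rather than carried out. For rigidity, almost-invariance of the towers under $T_{g_n}$ with $c_n\rightarrow\infty$ forces $F_n$ to be extremely elongated in the central direction, while the commutator relation $[M(a,0,0),M(0,b,0)]=M(0,0,ab)$ pushes products of horizontal elements quadratically into the center; reconciling this with the tiling conditions of the $(C,F)$-construction ($F_n c\subseteq F_{n+1}$ with disjoint translates) is precisely the non-abelian difficulty, which you name as "the main obstacle" but do not resolve. For weak mixing, there is no off-the-shelf "standard continuous-spectrum argument" here: one must exclude an \emph{uncountable} family of candidate eigenvalues $(\alpha,\beta)\in\R^2\setminus\bset{(0,0)}$ simultaneously, exhibiting for each one return elements $g=M(a,b,c)$ in the construction along which $e^{i(\alpha a+\beta b)}$ stays bounded away from $1$ while the corresponding tower level is almost invariant — and doing so compatibly with the central rigidity times. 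Since the choice of parameters achieving both constraints at once is exactly the content of Danilenko's construction, your text is a credible research plan whose two essential steps are placeholders, and it does not constitute a proof of the statement.
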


We will construct the Gaussian action associated to the infinite dimensional unitary irreducible representation of the Heisenberg group (see theorem \ref{petersonExtendedThm}) and show it is weakly mixing but not mildly mixing.
\begin{examp}[The Heisenberg group]
We will show there exists an action of the Heisenberg group that is weakly mixing but not mildly mixing.\\

Let us look at the infinite dimensional irreducible representation on $\Lp2{\bb{\R}}$ defined by:
$$
[\pi(M(a,b,c))f]\bb{t}:=e^{i\gamma\bb{c+bt}}f(t+a)
$$
It is irreducible, as we mentioned in \ref{HeisenbergDefn}, and therefore it has no invariant subspaces, specifically no finite dimensional non-trivial sub-representations, and therefore this representation is weakly mixing.

Next, we will show the representation has a rigid factor: define the sequence
$$
g_n:=M\bb{0,0,2\pi n+\frac1n}=\begin{pmatrix}
								1 &0 &2\pi n+\frac1n\\
								0 &1 &0\\
								0 &0 &1
							\end{pmatrix}
$$
This sequence is well defined, and $g_n\underset{\abs n\rightarrow\infty}\longrightarrow\infty$. In addition this is a rigid factor in the sense that for every $f\in L_2(\R)$, and $n\in\Z$:
$$
f(t)\neq\left[\pi\bb{M\bb{0,0,2\pi n+\frac1n}}f\right]\bb{t}=e^{i\bb{2\pi n+\frac1n}+i0t}\cdot f\bb{t+0}\rightarrow f\bb{t}
$$
\begin{rmk}\label{heisenbergNotMWMRmk}
According to theorem \ref{weakIsStrongThm}, the Heisenberg group is not minimally weakly mixing, since the above representation is weakly mixing but not strongly mixing.
\end{rmk}
The rest of this paper is dedicated to show the Gaussian action associated to $\pi$ is weakly mixing, but not mildly mixing.
\begin{defn}\label{gaussProcDef}
A centered stationary stochastic process $\bset{X^g;~g\in G}$ is called {\bf Gaussian} if for every $F=\bset{g_1,\dots,g_n}\subset G$, the joint $n$-dimensional distribution $P_F$ is defined by
$$
P_F\bb{\bintersect j 1 n\bset{X^{g_j}\in C_j}}=a\cdot\underset{C_1\times\cdots\times C_n}\int\exp\bb{-\frac12\binr{Mt}{t}}dt_1\dots dt_n
$$
where $a$ is some normalizing constant, $t=(t_1,\dots,t_n)$ and $M$ is the inverse of the regular covariance matrix $C_{ij}:=\E\bb{X^{g_i}X^{g_j\inv}}$
\end{defn}
\begin{defn}\label{gaussActDef}
Let $\sys$ be a standard probability space. An action $T:G\rightarrow PPT(X)$ is called {\bf a Gaussian dynamical system} if there exists $f\in\Lp2\sys_0$ such that:
\begin{enumerate}
\item The corresponding stochastic process $\bset{X^g:=f\circ T_g;\;g\in G}$ is a Gaussian process.
\item The smallest $\sigma$-algebra such that $f\circ T_g$ are all measurable is $\B$.
\end{enumerate}
If we denote by $\H:= span\bset{X^g;\;g\in G}$, then $\H$ is called a {\bf first chaos}, and it can be shown that
$$
\Lp2\sys=\mathbb C\oplus\underset{n=1}{\overset\infty\bigoplus} H^{\odot n}
$$
Where $H^{\odot n}$ is the symmetric tensor product of order $n$.
\end{defn}
\begin{defn}
Let $\H=\Lp2\sys_0$, $\sys$ a $\sigma$-finite measure space. Denote by $\B^{\odot n}$ the set of symmetric measurable sets in $X^n$, and let $m^{\odot n}=\frac1{n!}m^n$. Then we may identify $\H^{\odot n}=\Lp2\sys^{\odot n}_0$ with $\Lp2{(X^n,\B^{\odot n},m^{\odot n})}_0$, which is the subspace of symmetric functions in $\Lp2{\bb{X^n,\B^n,\frac1{n!}m^n}}_0$. This is by letting:
$$
f_1\odot\ldots\odot f_n=\sums\sigma{\mathfrak C_n}\prodit j 1 n f_j(x_{\sigma(j)})
$$
where $\mathfrak C_n$ is the set of all symmetric permutations in $S_n$.\\
The Hilbert direct sum 
$$
\underset{n=1}{\overset \infty\bigoplus}\H^{\odot n}:=\bset{\bset{f_n}_{n=1}^\infty\;\;;\;\;f_n\in\H^{\odot n}\;\;\text{and}\;\; \sumit n 0 \infty \norm{f_n}{}^2<\infty}
$$
is called {\bf the symmetric Fock space over $\H$}.
\end{defn}
\begin{thm}[Peterson \& Sinclair \cite{Peterson2009}]\label{petersonThm}
Let $G$ be a countable discrete group, let $\pi:G\rightarrow\H$ be a unitary representation of $G$ on a Hilbert space $\H$. Then there exists a Gaussian dynamical system $\sys$ and an action $T:G\rightarrow PPT(X)$, such that $\Lp2\sys_0\simeq\underset{n=1}{\overset \infty\bigoplus}\H^{\odot n}$, and $U_T|_{\H}=\pi$. This action is called {\bf the Gaussian process associated to $\pi$}.
\end{thm}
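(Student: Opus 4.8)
The plan is to realize $\H$ as the \emph{first chaos} of a Gaussian measure space and then to obtain $T$ from $\pi$ by the Gaussian functor. First I would construct, by Kolmogorov's extension theorem, a standard probability space $\sys$ carrying a centred complex Gaussian family $\bset{W(h);~h\in\H}$ that is linear in $h$ and satisfies $\E\bb{W(h)\overline{W(h')}}=\binr h{h'}$ together with $\E\bb{W(h)W(h')}=0$. This is the complex isonormal process, whose first chaos is a complex copy of $\H$ (for a real representation one uses the real isonormal process instead). The map $h\mapsto W(h)$ is then a linear isometry of $\H$ onto the closed subspace $\mathcal H_1\subseteq\Lp2{\sys}_0$ spanned by $\bset{W(h)}$, and we take $\B$ to be the $\sigma$-algebra generated by $\bset{W(h)}$, which makes $\sys$ standard.

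The second step is the Wiener--It\^o chaos decomposition. The Hermite (Wick) polynomials in the variables $W(h)$ give an orthogonal splitting $\Lp2{\sys}=\underset{n=0}{\overset\infty\bigoplus}\mathcal H_n$, where $\mathcal H_0=\mathbb C$ is the constants and the $n$-th chaos $\mathcal H_n$ is canonically isometric to the symmetric tensor power $\H^{\odot n}$. Removing the constants yields exactly $\Lp2{\sys}_0\simeq\underset{n=1}{\overset\infty\bigoplus}\H^{\odot n}$, the symmetric Fock space over $\H$, which is the required identification.

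Third I would produce the action. Fix $g\in G$; since $\pi(g)$ is unitary, the family $\bset{W(\pi(g)h)}$ is again centred Gaussian with the \emph{same} covariance, $\E\bb{W(\pi(g)h)\overline{W(\pi(g)h')}}=\binr{\pi(g)h}{\pi(g)h'}=\binr h{h'}$, so it has the same joint law as $\bset{W(h)}$. Hence $W(h)\mapsto W(\pi(g)h)$ extends to a measure-preserving automorphism of the measure algebra of $\sys$; because $\sys$ is standard, the point-realization theorem yields an a.e.-defined invertible measure-preserving transformation $T_g$ inducing it, with $W(h)\circ T_g=W(\pi(g)h)$. With the composition order fixed by the paper's Koopman convention, $g\mapsto T_g$ is a homomorphism $T:G\rightarrow PPT(X)$, and it is automatically continuous since $G$ is discrete. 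By construction the Koopman operator satisfies $U_{T_g}W(h)=W(\pi(g)h)$, that is $U_T|_{\mathcal H_1}=\pi$ under the identification $W:\H\simeq\mathcal H_1$; equivalently $U_{T_g}$ is the second quantization $\Gamma(\pi(g))$, which is consistent with Step 2. Finally, when $\pi$ has a cyclic vector $v$ one checks that $f:=W(v)$ satisfies both clauses of Definition \ref{gaussActDef}, exhibiting $T$ as a Gaussian dynamical system (in general the full first-chaos process plays this role).

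The main obstacle is the functoriality step: an abstract unitary $\pi(g)$ preserves no coordinate system of $\sys$, so coordinates cannot simply be transported, and the existence of a genuine point map $T_g$ rests on (i) the equality of the joint Gaussian laws of $\bset{W(h)}$ and $\bset{W(\pi(g)h)}$, and (ii) the point-realization theorem identifying measure-algebra automorphisms of a standard space with point transformations modulo null sets. Verifying that $g\mapsto T_g$ is a homomorphism rather than an anti-homomorphism is then routine bookkeeping, settled by passing to $\pi(g^{-1})$ or to the contragredient representation as dictated by the convention. The remaining verifications — that the Hermite polynomials realize $\mathcal H_n\simeq\H^{\odot n}$ and that $\B$ makes $\sys$ standard — are standard facts about Gaussian Hilbert spaces.
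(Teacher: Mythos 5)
The paper offers no proof of this statement to compare against: it is quoted from Peterson \& Sinclair \cite{Peterson2009}, with the construction also attributed to \cite{Neveu1968}, \cite{Janson1997}, and \cite{Glasner2003}. Your proposal reconstructs exactly the standard argument those references use: an isonormal Gaussian family via Kolmogorov extension, the Wiener--It\^o chaos decomposition identifying $\Lp2\sys_0$ with the symmetric Fock space, and second quantization $U_{T_g}=\Gamma(\pi(g))$ realized as a genuine point map by the equality of the joint Gaussian laws of $\bset{W(h)}$ and $\bset{W(\pi(g)h)}$ together with the point-realization theorem on a standard space. Your identification of the functoriality step as the real content (and the homomorphism-versus-anti-homomorphism bookkeeping as routine) is accurate.

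One step fails as literally written, namely Step 2 for the \emph{complex} isonormal process. With the covariance structure you impose, $\E\bb{W(h)W(h')}=0$, the random variable $\overline{W(h)}$ is $\B$-measurable (hence lies in $\Lp2\sys_0$) yet is orthogonal to every Wick monomial in the $W(h)$'s alone; the chaos decomposition of a complex Gaussian field is doubly graded, $\Lp2\sys_0\simeq\bigoplus_{(p,q)\neq(0,0)}\H^{\odot p}\otimes\overline{\H}^{\odot q}$, so the closed span of the holomorphic chaoses $\bigoplus_{n\ge1}\H^{\odot n}$ is a proper subspace of $\Lp2\sys_0$ and your claimed splitting does not exhaust $L_2$. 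Your parenthetical hedge points the wrong way: the issue arises precisely in the complex case, and the standard fix (the one the cited sources use) is to pass to the underlying real Hilbert space $\H_\R$ with inner product $\mathrm{Re}\binr{\cdot}{\cdot}$, on which $\pi$ acts orthogonally, build the \emph{real} isonormal process, and complexify. Then $\Lp2\sys_0\simeq\bigoplus_{n\ge1}\bb{\H_\R\otimes_\R\comp}^{\odot n}$ with $\H_\R\otimes_\R\comp\simeq\H\oplus\overline{\H}$, so the first chaos carries $\pi\oplus\bar\pi$ rather than $\pi$ on the nose; this contains $\pi$ as a subrepresentation, which is the sense in which $U_T|_{\H}=\pi$ holds and is all that the weak-mixing and rigidity arguments later in the paper require. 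With that correction, your outline is the proof.
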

This construction was also mentioned in \cite{Neveu1968}, \cite{Janson1997}, \cite{Glasner2003}, and \cite{Peterson2009}.\\
Denote by $\pi^\mathfrak C$ the Koopman representation on $\underset{n=1}{\overset \infty\bigoplus}\H^{\odot n}$.
\begin{defn}\label{nearActDef}
Let $G$ be a Polish group and $\sys$ a standard probability space. As in \cite{Glasner}, define a {\bf near-action} (or a {\bf Boolean action}) of $G$ on $\sys$ to be a Borel map $T:G\rightarrow PPT(X)$ such that:
\begin{enumerate}
\item If we denote by $e$ the identity element of $G$, then for $m$ almost every $x\in X$, $T_ex = x$.
\item For every $g,h\in G$, $T_g(T_hx) = T_{gh}(x)$ for $m$- almost every $x\in X$.
\end{enumerate}
\end{defn}
\begin{thm}[Mackey \cite{Mackey1962} \& Ramsay \cite{Ramsay1971}]\label{nearActIsActThm}
Let $G$ be a locally compact Polish group. Then every near action is isomorphic to an action.
\end{thm}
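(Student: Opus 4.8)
The strategy is to replace the near-action, which only obeys the group law modulo null sets, by the genuine (exception-free) action it induces on the measure algebra, and then to realize that abstract action concretely as an everywhere-defined point action on a standard Borel space via a spectral construction. First I would pass to the measure algebra $\mathfrak M := \B/\mathcal N$, where $\mathcal N$ is the ideal of $m$-null sets. Each $T_g\in PPT(X)$ descends to an automorphism $\hat T_g$ of $\mathfrak M$, and the defining relations of a near-action, namely $T_e x = x$ and $T_gT_h x = T_{gh}x$ for $m$-almost every $x$, become the \emph{exact} identities $\hat T_e=\mathrm{id}$ and $\hat T_g\hat T_h=\hat T_{gh}$, because the offending null sets vanish in $\mathfrak M$. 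Equivalently, the operators $\alpha_g(f)=f\circ T_{g^{-1}}$ form an honest homomorphism $G\to\mathrm{Aut}(\Lp\infty\sys)$, and the corresponding Koopman operators on $\Lp2\sys$ form a genuine unitary representation. Since $T$ is by hypothesis a Borel map into the Polish group $PPT(X)$, the induced homomorphism into the Polish group $\mathrm{Aut}(\mathfrak M)$ is Borel, hence automatically continuous by Pettis' theorem; so it is strongly continuous. Thus the remaining obstruction is no longer algebraic but purely a matter of point realization.

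Next I would build a separable, invariant model. Since $G$ is Polish and second countable, fix a countable dense subgroup $G_0\le G$, and since $\Lp2\sys$ is separable, fix a countable family $\bset{f_n}\subseteq\Lp\infty\sys$ generating $\B$ modulo null sets. Let $M_0\subseteq\Lp\infty\sys$ be the abelian von Neumann subalgebra generated by $\bset{\alpha_g f_n;~g\in G_0,\ n\in\N}$. By strong continuity of $\alpha$ and density of $G_0$, $M_0$ is invariant under $\alpha_g$ for every $g\in G$; being countably generated, $M_0=\Lp\infty{(K,\mu)}$ for some standard probability space $(K,\mu)$, and since the $f_n$ generate $\B$ we get $M_0=\Lp\infty\sys$, i.e. a measure-space isomorphism $(K,\mu)\cong\sys$ intertwining the two families of automorphisms. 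Applying Gelfand duality to a separable, $G_0$-invariant, conjugation-closed $C^\star$-subalgebra dense in $M_0$ produces a compact metric space on which $G_0$ acts by honest homeomorphisms preserving the image of $\mu$. This already yields a genuine point action of the countable group $G_0$ that is isomorphic to the restriction $T|_{G_0}$.

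The final, and hardest, step is to upgrade the point action of the dense countable subgroup $G_0$ to a genuine everywhere-defined action of all of $G$, depending measurably on $g$, and here the local compactness and second countability of $G$ are essential. For $g\nin G_0$ one would like to set $S_g=\lim S_{g_n}$ along $g_n\to g$ in $G_0$, but such limits exist only in the measure-algebra topology and do not automatically furnish a point map; so a genuine Borel map $(g,x)\mapsto S_g x$ must be extracted by a measurable-selection argument of Lusin--Novikov type, defined on a co-null set and satisfying $S_gS_h x=S_{gh}x$ for $(m_G\times m_G\times m)$-almost every $(g,h,x)$, where $m_G$ is Haar measure. One then runs a Fubini argument over $G$ to cut down to a $G$-invariant co-null Borel subset $X'\subseteq K$ on which $S_e=\mathrm{id}$ and the cocycle identity hold for all group elements simultaneously; restricted to $X'$, the map $S$ is a genuine continuous action isomorphic to $T$.

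I expect this last measurability-and-everywhere-cocycle step to be the crux. The soft parts (passage to the measure algebra, automatic continuity, and the spectral realization for the countable subgroup) are essentially bookkeeping, whereas promoting the countable action to a jointly measurable action of the full group with the cocycle identity holding pointwise on an invariant conull set is exactly the substance of Mackey's point-realization theorem and Ramsay's refinement, and it genuinely requires the Borel/groupoid selection machinery rather than any elementary limiting argument.
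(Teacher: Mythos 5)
First, a point of comparison: the paper does not prove this theorem at all. It is quoted as an external result of Mackey and Ramsay (with a pointer to \cite{Glasner} for details), so there is no in-paper argument to measure your sketch against; your proposal has to stand on its own as a proof, and it does not quite do so. The soft steps are fine: passing to the measure algebra to make the group law exact, automatic continuity via Pettis, and the Gelfand-duality realization of the action of a countable dense subgroup $G_0$ are all correct and are indeed how the standard proof begins. The problem is the final step, which you yourself flag as the crux and then resolve by saying it ``is exactly the substance of Mackey's point-realization theorem and Ramsay's refinement.'' That is circular: the theorem you are asked to prove \emph{is} the Mackey--Ramsay theorem, so deferring the pointwise upgrade from $G_0$ to $G$ to ``the Borel/groupoid selection machinery'' of those papers leaves the actual content unproved. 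Moreover, the Lusin--Novikov gesture as stated does not obviously go through: you have no candidate Borel map $(g,x)\mapsto S_gx$ defined for $g\nin G_0$ to select from, since, as you note, limits along $G_0$ exist only in the measure-algebra topology.

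The concrete missing idea is that local compactness should enter not through a selection argument at the end but through Haar smoothing at the Gelfand stage. Instead of a merely $G_0$-invariant separable $C^\star$-subalgebra, one builds a \emph{norm-separable, $G$-invariant} $C^\star$-subalgebra of $\Lp\infty\sys$ on which the action of the \emph{full} group is norm-continuous: starting from the countable family $\bset{f_n}$, replace each $f$ by its convolutions $\int_G \phi(h)\,\alpha_h(f)\,dm_G(h)$ with $\phi\in L_1(G)$ running over a countable approximate identity (this is where Haar measure, hence local compactness, is used), and take the $C^\star$-algebra these generate. For such an algebra, $g\mapsto\alpha_g(f)$ is norm-continuous for every $f$ in it, so Gelfand duality produces a compact metric space with a \emph{jointly continuous} action of all of $G$ and a quasi-invariant (here invariant) measure; the point realization then holds for every $g\in G$ simultaneously, with no almost-everywhere cocycle identity left to repair and no selection theorem needed. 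With that device your outline becomes a genuine proof; without it, the hardest step of your argument is an appeal to the statement being proved.
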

For more details see \cite{Glasner}.
\begin{thm}\label{petersonExtendedThm}
Let $G$ be a locally compact Polish group, let $\pi:G\rightarrow\H$ be a unitary representation of $G$ on a Hilbert space $\H$. Then there exists a Gaussian dynamical system $\sys$ and an action $T:G\rightarrow PPT(X)$, such that $\Lp2\sys_0\simeq\underset{n=1}{\overset \infty\bigoplus}\H^{\odot n}$, and $U_T|_{\H}=\pi$. This $G$-action is called {\bf the Gaussian process associated to $\pi$}.
\end{thm}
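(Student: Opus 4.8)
The plan is to reduce to the countable discrete case already established in Theorem~\ref{petersonThm} by passing to a dense countable subgroup, to extend the resulting action from the subgroup to all of $G$ by a strong-operator continuity argument, and finally to upgrade the almost-everywhere homomorphism so obtained to a genuine action by invoking the Mackey--Ramsay theorem (Theorem~\ref{nearActIsActThm}). The object that depends only on the representation data is the symmetric Fock space $\bigoplus_{n=1}^\infty\H^{\odot n}$ together with its second-quantized operators; the group-theoretic input only governs how these operators are parametrized.

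First I would fix a countable dense subgroup $G_0\subseteq G$ and apply Theorem~\ref{petersonThm} to the restriction $\pi|_{G_0}$. This yields a Gaussian dynamical system $\sys$, an action $T^0:G_0\rightarrow PPT(X)$, a Hilbert space isomorphism $\Lp2\sys_0\simeq\bigoplus_{n=1}^\infty\H^{\odot n}$, and the identity $U_{T^0}|_\H=\pi|_{G_0}$. The key observation is that the underlying measure space $\sys$ is determined by the first chaos $\H$ alone and not by the acting group, so the very same space will carry the extended $G$-action; only the parametrization $g\mapsto T_g$ has to be produced for $g\in G\setminus G_0$.

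Next I would consider the second-quantized representation $\pi^\mathfrak C(g):=\bigoplus_{n\ge 1}\pi(g)^{\odot n}$, which on $G_0$ agrees with the Koopman representation $U_{T^0}$. Since the $\pi(g)$ are unitary they are uniformly bounded, and on an elementary symmetric tensor $\pi(g)^{\odot n}(f_1\odot\cdots\odot f_n)=\pi(g)f_1\odot\cdots\odot\pi(g)f_n$, so strong continuity of $\pi$ gives strong continuity of each $\pi(g)^{\odot n}$; the uniform norm bound together with density of the algebraic sum then upgrades this to strong continuity of $\pi^\mathfrak C$ on all of $\bigoplus_{n\ge 1}\H^{\odot n}$. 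Consequently, for $g\in G$ and any $g_k\to g$ in $G_0$ we have $\pi^\mathfrak C(g)=\lim_k U_{T^0_{g_k}}$ in the strong operator topology, independently of the approximating sequence. Each $\pi^\mathfrak C(g)$ is moreover a \emph{positive} isometry: if $f\ge 0$ then $U_{T^0_{g_k}}f\ge 0$, and almost-everywhere convergence along a subsequence forces $\pi^\mathfrak C(g)f\ge 0$. By Lamperti's theorem (the Koopman correspondence recalled after Definition~\ref{PolishDef}) there is a non-singular, in fact measure-preserving, transformation $T_g$ with $U_{T_g}=\pi^\mathfrak C(g)$, and the continuity of $\pi^\mathfrak C$ transports to continuity of $g\mapsto T_g$ under the pull-back topology on $NST(X)$.

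Because $\pi^\mathfrak C$ is a genuine homomorphism of operators and $T\mapsto U_T$ is a homomorphism, the identities $T_ex=x$ and $T_gT_hx=T_{gh}x$ hold for $m$-almost every $x$; together with Borel measurability of $g\mapsto T_g$ this exhibits a near-action in the sense of Definition~\ref{nearActDef}. I would then invoke Theorem~\ref{nearActIsActThm} to replace it by an isomorphic genuine action $T:G\rightarrow PPT(X)$; the realizing isomorphism preserves the measure algebra, hence intertwines the Koopman representations and retains both $U_T|_\H=\pi$ and the identification $\Lp2\sys_0\simeq\bigoplus_{n\ge 1}\H^{\odot n}$. The main obstacle, and the precise point at which the Polish setting differs from the countable discrete one, is exactly this last reconciliation: for $g\notin G_0$ the transformation $T_g$ is only defined modulo null sets and the homomorphism relations hold merely almost everywhere, so assembling these uncountably many almost-everywhere statements into a single pointwise action is genuinely non-trivial and is what Theorem~\ref{nearActIsActThm} is used to overcome.
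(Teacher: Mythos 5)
Your proposal follows the paper's proof essentially verbatim: restrict $\pi$ to a dense countable subgroup and apply Theorem~\ref{petersonThm}, extend the Koopman operators to all of $G$ by strong-operator limits (the paper establishes this via the same telescoping estimate on elementary tensors with the uniform unitary bound that you invoke), realize the limiting positive isometries as measure-preserving transformations via Lamperti's theorem, verify the almost-everywhere homomorphism identities to obtain a near-action, and conclude with Theorem~\ref{nearActIsActThm}. Your one variation---identifying the limit operators a priori as the second quantization $\pi^{\mathfrak C}(g)$, which makes existence and sequence-independence of the limits automatic---is a mild streamlining of the paper's interleaved-Cauchy-sequence argument rather than a genuinely different route.
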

\begin{proof}
Let $G_0\subset G$ be a countable dense subgroup. Endow $G_0$ with the discrete topology, then by theorem \ref{petersonThm}, there exists a Gaussian action $T^0:G_0\rightarrow PPT(X)$ on a standard probability space $\sys$, such that $U_{T^0}|_{\H}= \pi|_{G_0}$ ($U_{T^0}$ is the Koopman representation induced by $T^0$), and $\Lp2\sys_0\simeq\underset{n=1}{\overset \infty\bigoplus}\H^{\odot n}$. We will show there exists near action of $G$ on the same space, $T$, such that $T|_{G_0}=T^0$, and then use theorem \ref{nearActIsActThm} to conclude that we have an action isomorphic to this near action.\\
Let $\bset{g_k}$ be a Cauchy sequence, then by definition of a representation, $\bset{\pi\bb{g_k}}$ is also a Cauchy sequence. Let there be $n\in\N$, and let us look at the sequence $\bset{\pi^{\odot n}\bb{g_k}}$, when $\pi^{\odot n}(g)$ is the representation defined on $\H^{\odot n}$. We will show this is a Cauchy sequence as well. For every vector $e_1\otimes e_2\otimes\cdots\otimes e_n\in \H^{\otimes n}$:
$$
\norm{\pi^{\otimes n}\bb{g_m}\bb{e_1\otimes e_2\otimes\cdots\otimes e_n}-\pi^{\otimes n}\bb{g_k}\bb{e_1\otimes e_2\otimes\cdots\otimes e_n}}{}=
$$
$$
=\norm{\pi\bb{g_m}e_1\otimes \pi\bb{g_m}e_2\otimes\cdots\otimes \pi\bb{g_m}e_n-\pi\bb{g_k}e_1\otimes \pi\bb{g_k}e_2\otimes\cdots\otimes \pi\bb{g_k}e_n}{}\le
$$
$$
\le \norm{\pi\bb{g_m}e_1\otimes \pi\bb{g_m}e_2\otimes\cdots\otimes \pi\bb{g_m}e_n-\pi\bb{g_k}e_1\otimes \pi\bb{g_m}e_2\otimes\cdots\otimes \pi\bb{g_m}e_n}{}
$$
$$
+\norm{\pi\bb{g_k}e_1\otimes \pi\bb{g_m}e_2\otimes\cdots\otimes \pi\bb{g_m}e_n-\pi\bb{g_k}e_1\otimes \pi\bb{g_k}e_2\otimes \pi\bb{g_m}e_3\otimes\cdots\otimes \pi\bb{g_m}e_n}{}+\cdots+
$$
$$
+\cdots+\norm{\pi\bb{g_k}e_1\otimes\cdots\otimes \pi\bb{g_k}e_{n-1}\otimes \pi\bb{g_m}e_n-\pi\bb{g_k}e_1\otimes\cdots\otimes \pi\bb{g_k}e_n}{}=
$$
$$
=\norm{\bb{\pi\bb{g_m}-\pi\bb{g_k}}e_1\otimes \pi\bb{g_m}e_2\otimes\cdots\otimes \pi\bb{g_m}e_n}{}+
$$
$$
+\norm{\pi\bb{g_k}e_1\otimes \bb{\pi\bb{g_m}-\pi\bb{g_k}}e_2\otimes\cdots\otimes \pi\bb{g_m}e_n}{}+\cdots+
$$
$$
+\cdots+\norm{\pi\bb{g_k}e_1\otimes \cdots\otimes \pi\bb{g_k}e_{n-1}\otimes\bb{\pi\bb{g_m}-\pi\bb{g_k}}e_n}{}
$$
Now every one of these norms converges to zero, since $\bset{\pi\bb{g_k}}$ is a Cauchy sequence and the norm of each $\pi(g)$ is uniformly bounded by 1. Overall, since $n$ is fixed, the sequence $\bset{\pi^{\otimes n}\bb{g_k}}$ is a Cauchy sequence, specifically $\pi^{\odot n}(g)=\pi^{\otimes n}(g)|_{\H^{\odot n}}$ is a Cauchy sequence. We conclude the induced Koopman representation $U_{T^0}$ is continuous on $G_0$.\\
Let $g\in G$, then there exists a Cauchy sequence $\bset{g_n}\subseteq G_0$ such that $g_n\rightarrow g$. The sequence $\bset{U_{T_{g_n}}}$ is a Cauchy sequence in the group of positive isometries, denote by $U_g$ its limit, which is positive and unitary. We will show $U_g$ does not depend on the choice of the sequence $\bset{g_n}$. Let $\bset{g_n'}\subseteq G_0$ be a different sequence such that $\limit n\infty g_n'=g$. Define a new sequence $h_n$ by:
$$
h_n=		\begin{cases}
		g_k\;\;;\;\; n=2k\\
		g'_k\;\;;\;\; n=2k+1
		\end{cases}
$$
Then this is also a Cauchy sequence in $G_0$, and therefore $\bset{U_{T_{h_n}}}$ is also a Cauchy sequence and has a limit, which is the same limit as the limit of $\bset{U_{T_{g_n}}}$, and of $\bset{U_{T_{g_n'}}}$. We conclude the limit does not depend on the choice of the sequence.\\
By Lamperti's theorem, there exists a non-singular transformation $S$ such that, $U_gf=\sqrt{\frac{d\bb{m\circ S}}{dm}}f\circ S$. Denote by $T_g=S=S(g)$. Since the mapping between positive isometries and non-singular actions is a bijection, if $g\in G_0$, then $T^0_g=T_g$. In addition, since $U_{g_n}\rightarrow U_g$ (in the weak operator topology), then for every $k$, $U_g|_{\H^{\odot k}}=U_g^{\odot k}$, $\frac{d\bb{m\circ T_g}}{dm}=1$, and $f\circ T_{g_n}\overset{m}\rightarrow f\circ T$.\\
We will show that $T_{gh}\overset{a.e}=T_gT_h$- let there be $g,h\in G$, then there exist $\bset{g_n},\bset{h_n}\subseteq G_0$ such that $g_n\rightarrow g, h_n\rightarrow h$. Since $T^0$ is an action, there exists a measurable set of full measure, $X_0$, such that for every $x\in X_0$, and every $n,m\in\N$ we have $T_{g_n}T_{h_m}x=T_{g_nh_m}x$. In addition,  as we showed above, if $g_n\rightarrow g$, then $T_{g_n}\overset{m}\rightarrow T_g$, and specifically there exists a subsequence $\bset{n_k}$ such that $T_{g_{n_k}}\overset{a.e}\rightarrow T_g$. Denote the set $X_1:=\bset{x;~T_{g_{n_k}}x\not\rightarrow T_gx \text{ or } T_{h_{m_j}}x\not\rightarrow T_hx}$, then it is of measure zero, and for every $x\in X_0\setminus X_1$:
$$
T_gT_hx=\limit k\infty \limit j\infty T_{g_{n_k}}T_{h_{m_j}}x=\limit k\infty \limit j\infty T_{g_{n_k}h_{m_j}}x=T_{\limit k\infty \limit j\infty g_{n_k}h_{m_j}}x=T_{gh}x
$$
We conclude $T$ is indeed a near action, and by theorem \ref{nearActIsActThm} there exists action, which is isomorphic to this action.
\end{proof}
\begin{thm}\label{GaussianThm}
Let $G$ be a locally compact Polish group, $\H$ a separable Hilbert space, $\pi:G\rightarrow Isom(\H)$ a unitary representation, and let $(X,\B,m,T)$ be the Gaussian process associated to $\pi$. Then the action $T$ is ergodic if and only if it is weakly mixing if and only if $\pi$ is weakly mixing.
\end{thm}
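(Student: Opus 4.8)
The plan is to establish the cycle of implications
$T \text{ weakly mixing} \Rightarrow T \text{ ergodic} \Rightarrow \pi \text{ weakly mixing} \Rightarrow T \text{ weakly mixing}$,
which gives all three equivalences simultaneously. The translation device is that $T$ is probability preserving, so its Koopman operators $U_T$ are multiplicative and the reduced Koopman representation of $T$ is exactly the Fock representation $\pi^\mathfrak C = \underset{n=1}{\overset\infty\bigoplus}\pi^{\odot n}$ acting on $\Lp2{\sys}_0 \simeq \underset{n=1}{\overset\infty\bigoplus}\H^{\odot n}$; this is the second quantisation of $\pi$ built in Theorem \ref{petersonExtendedThm}, so that $U_T|_{\H} = \pi$ and $U_T|_{\H^{\odot n}} = \pi^{\odot n}$. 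With this in hand, Remark \ref{bergelsonCor} says $T$ is weakly mixing iff $\pi^\mathfrak C$ has no nonzero finite-dimensional invariant subspace, while $T$ is ergodic iff $\pi^\mathfrak C$ has no nonzero invariant vector.

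The first implication is immediate: if $T$ is weakly mixing then by Remark \ref{bergelsonCor} there is no finite-dimensional $T$-invariant subspace of $\Lp2{\sys}_0$, in particular no nonzero invariant vector, so $T$ is ergodic. For the second implication I argue by contraposition. Suppose $\pi$ is not weakly mixing; by the Bergelson--Rosenblatt characterisation (Theorem \ref{bergelsonRosenblattThm}, equivalence of conditions (1) and (3)) it contains a nonzero finite-dimensional invariant subspace $W\subseteq\H$, with orthonormal basis $f_1,\dots,f_d$ on which $\pi(g)$ acts by a unitary matrix $A(g)$. Under the chaos identification the $f_j$ are genuine centred Gaussian functions on $X$, hence in $\Lp p{\sys}$ for all $p$, so $\Phi:=\sum_{j=1}^d|f_j|^2\in\Lp2{\sys}$. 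Since $U_T$ is multiplicative and commutes with conjugation, for every $g$ we have $U_T\Phi=\sum_j|\pi(g)f_j|^2=\sum_{i,k}(A(g)A(g)^\star)_{ik}f_i\overline{f_k}=\sum_i|f_i|^2=\Phi$, so $\Phi$ is invariant; and $\Phi$ is not a.e. constant, its projection to $\Lp2{\sys}_0$ being a nonzero second-chaos element. Thus $\Phi-\E(\Phi)$ is a nonzero invariant vector of $\Lp2{\sys}_0$ and $T$ is not ergodic.

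For the third implication I must propagate weak mixing from $\pi$ to all of $\pi^\mathfrak C$. The key lemma is the tensor stability of weak mixing: if $\pi$ is weakly mixing then $\pi\otimes\sigma$ is weakly mixing for every unitary $\sigma$. Indeed $\pi\otimes\sigma$ is weakly mixing iff $(\pi\otimes\sigma)\otimes\tau\cong\pi\otimes(\sigma\otimes\tau)$ is ergodic for all $\tau$, and this holds because $\pi$ is weakly mixing (Theorem \ref{bergelsonRosenblattThm}, condition (5), applied with the representation $\sigma\otimes\tau$). Consequently each $\pi^{\otimes n}=\pi\otimes\pi^{\otimes(n-1)}$ is weakly mixing, each symmetric power $\pi^{\odot n}\le\pi^{\otimes n}$ is weakly mixing as a subrepresentation, and hence each $\pi^{\odot n}\otimes\sigma$ is ergodic, i.e. has no nonzero invariant vector, for every $\sigma$. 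Since an invariant vector of a direct sum must be an invariant vector of some summand, $\pi^\mathfrak C\otimes\sigma=\underset{n}{\bigoplus}(\pi^{\odot n}\otimes\sigma)$ has no nonzero invariant vector for any $\sigma$; that is, $\pi^\mathfrak C$ is weakly mixing. By Remark \ref{bergelsonCor}, $T$ is weakly mixing, closing the cycle.

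The step I expect to be the main obstacle is the identification $U_T|_{\H^{\odot n}}=\pi^{\odot n}$ — that the second quantisation functor carries $\pi$ to the Koopman representation on the symmetric Fock space — together with the bookkeeping in the second implication: one must certify that first-chaos elements are bona fide functions with all moments finite (Gaussian integrability), that the real structure of the process makes $\overline{f_j}$ available and compatible with $U_T$, and that $\Phi$ is genuinely nonconstant in $\Lp2{\sys}_0$. A secondary point is to justify the tensor and infinite-direct-sum stability directly from the invariant mean if one prefers not to quote the abstract form of Theorem \ref{bergelsonRosenblattThm}: here the bound on decomposable tensors $|\binr{\pi^{\otimes n}(g)(u_1\otimes\cdots\otimes u_n)}{v_1\otimes\cdots\otimes v_n}|\le|\binr{\pi(g)u_1}{v_1}|\prods{i\ge2}{}\norm{u_i}{}\norm{v_i}{}$ dominates each coefficient by a single first-chaos coefficient of mean zero, and the estimate $|\m(\cdot)|\le\norm{\cdot}{\infty}$ with uniform sup-norm control of tails handles the countable direct sum.
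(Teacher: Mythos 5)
Your proof is correct, but it is organized differently from the paper's and one half of it uses a genuinely different mechanism. The paper first collapses ``ergodic $\iff$ weakly mixing'' for $T$ via the Fock-space identity $\pi^{\mathfrak C}\otimes\pi^{\mathfrak C}=\pi^{\mathfrak C}$, and then proves ``$T$ ergodic $\iff$ $\pi$ weakly mixing'' in two abstract steps: if $\pi$ is not weakly mixing it takes an invariant vector $\xi\in\H^{\otimes 2}$, passes to the Hilbert--Schmidt picture, and symmetrizes via the self-adjoint part $\frac{\tilde\xi+\tilde\xi^\star}{2}$ to land an invariant vector in the second chaos; conversely an invariant Fock vector is fed into the factorization $\bigoplus_{n\ge1}\pi^{\otimes n}=\pi\otimes\bb{\indic{}\oplus\bigoplus_{n\ge1}\pi^{\otimes n}}$ to contradict condition (5) of Theorem \ref{bergelsonRosenblattThm}. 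Your three-step cycle avoids the identity $\pi^{\mathfrak C}\otimes\pi^{\mathfrak C}=\pi^{\mathfrak C}$ altogether, and your contrapositive step replaces the paper's tensor symmetrization with the concrete function $\Phi=\sum_j|f_j|^2$, exploiting multiplicativity of the Koopman operator and Gaussian integrability of first-chaos elements --- this is exactly the device of Proposition \ref{weak&measureProp}, transplanted to the Gaussian setting. Each route buys something: the paper's argument stays purely inside representation theory (no $L_4$ bounds, no conjugation structure on the first chaos needed), though it silently assumes $\tilde\xi+\tilde\xi^\star\neq 0$ (if the self-adjoint part vanishes one must instead use $i(\tilde\xi-\tilde\xi^\star)$); your argument sidesteps that wrinkle but owes the nonconstancy of $\Phi$, which does hold --- $\Phi$ is a nondegenerate positive quadratic form in jointly Gaussian variables, so it has positive variance; equivalently its second-chaos component corresponds to $\sum_j f_j\odot\overline{f_j}$, whose pairing with $f_k\otimes\overline{f_k}$ is at least $1$ --- so the step you flagged is genuine but standard. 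Your third implication (tensor stability of weak mixing via condition (5) of Theorem \ref{bergelsonRosenblattThm} applied to $\sigma\otimes\tau$, then summand-by-summand ergodicity of $\bigoplus_n(\pi^{\odot n}\otimes\sigma)$) is a mild repackaging of the paper's converse direction and is sound, as is your reliance on $U_T|_{\H^{\odot n}}=\pi^{\odot n}$, which the paper's proof of Theorem \ref{petersonExtendedThm} establishes en route; note also that Theorem \ref{bergelsonRosenblattThm}, though stated for reduced Koopman representations, is applied by the paper itself to arbitrary unitary representations (e.g.\ in Proposition \ref{mwkIsRGuyProp}), so your uses of it are legitimate.
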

\begin{proof}
Remember that the action described is measure preserving and therefore ergodicity and weak mixing properties of the action are equivalent to the same properties of the Koopman representation. Note that since $\pi^\mathfrak C\otimes\pi^\mathfrak C=\pi^\mathfrak C$, then the action is ergodic if and only if it is weakly mixing. We will show the action is ergodic if and only if $\pi$ is weakly mixing.\\
If $\pi$ is not weakly mixing, then $\pi\otimes\pi$ is not ergodic, and therefore there exists $\xi\in H^{\otimes 2}$, which is $G$-invariant. Denote by $B_2(\H)$ the collection of Hilbert-Schmidt operators defined on $\H$ (see \cite{Dixmier1977} page 314). There exists an isometric isomorphism $\phi:\H^*\otimes\H\rightarrow B_2(\H)$ defined by:
$$
\phi(f\otimes e)=\binr*f e
$$
It is indeed an isometry, and $h\in\H^*\otimes\H$ is symmetric if and only if $\phi(h)$ is self-adjoint. Denote $\tilde\xi=\phi(\xi)$ and let us look at $\bb{\frac{\tilde\xi+\tilde\xi^*}2}$. It is a self adjoint operator and indeed a Hilbert Schmidt operator. In addition $\phi\inv\bb{\bb{\frac{\tilde\xi+\tilde\xi^*}2}}$ is both symmetric and $G$-invariant. We conclude that $\pi^{\mathfrak C}$ has an invariant vector and therefore it is not ergodic.\\
Conversely, assume $\pi^{\mathfrak C}$ is not ergodic, then there exists $\xi\in\underset{n=1}{\overset \infty\bigoplus}\H^{\odot n}$ which is $G$-invariant. Since $\pi^{\mathfrak C}$ is a sub-representation of $\underset{n=1}{\overset \infty\bigoplus}\pi^{\otimes n}$, then this representation also has an invariant vector. Note that 
$$
\underset{n=1}{\overset \infty\bigoplus}\pi^{\otimes n}=\pi\oplus\underset{n=2}{\overset \infty\bigoplus}\pi^{\otimes n}=\pi\otimes\bb{\indic{}\oplus\underset{n=1}{\overset \infty\bigoplus}\pi^{\otimes n}}
$$
Then the product representation $\pi\otimes\bb{\indic{}\oplus\underset{n=1}{\overset \infty\bigoplus}\pi^{\otimes n}}$ has an invariant vector. Now, $\indic{}\oplus\underset{n=1}{\overset \infty\bigoplus}\pi^{\otimes n}$ is also a unitary representation. We conclude $\pi\otimes\pi$ has an invariant vector, which means it is not ergodic, and $\pi$ is not weakly mixing.
\end{proof}
The first proof of this theorem was done by Maruyama for $G=\Z$ in 1949 (see \cite{Maruyama1949}). The proof above was taken from notes created by Peterson. For more details see 
\href{http://www.math.vanderbilt.edu/~peters10/}{Peterson's homepage}.\\

According to theorem \ref{petersonExtendedThm} applied to the unitary representation of $H_3(\R)$ described above, the action $T$ is weakly mixing. If we will show it is also not mildly mixing, then we will have the desired example.\\
We will show there exists a rigid sequence, and by Schmidt-Walter's result (theorem \ref{mildRigidThm}) the action is not mildly mixing. But since $\pi$ is contained in the Koopman representation of $H_3(\R)$ on $\Lp2{\sys}_0$, then we already saw there is a rigid factor. We conclude this action is weakly mixing and not mildly mixing.
\end{examp}
\section{Acknowledgments}
This work is the author's master thesis.\\
The author would like to personally thank her MSc adviser, Jon Aaronson, for acquainting her to the  world of dynamics, and affording her guidance and patience. Also, to Dror Speiser who drew the authors attention to the the fact that the infinite Dihedral group and the quaternions group are both infinite non-abelian Moore groups. Many thanks are also given to Eli Glasner, Benjamin Weiss, and Alexandre I. Danilenko for finding mistakes in previous versions. Last but not least to Tom Meyerovitch, Zemer Kosloff, \& Michael Bromberg for useful conversations. Their support, and contribution are greatly appreciated.\\
This work has been partially supported by I.S.F grants numbers: 1114/08, and 1157/08.
\bibliographystyle{plain}
\nocite{*}
\bibliography{thesis-references}
\end{document}